\newcommand{\Le}{\mathcal{L}}
\DeclareMathOperator{\tr}{tr}
\DeclareMathOperator{\diag}{diag}
\DeclareMathOperator{\dist}{dist}
\DeclareMathOperator{\supp}{supp}
\DeclareMathOperator{\conv}{Conv}
\DeclareMathOperator{\sgn}{sgn}
\newcommand{\Su}{\operatorname{S}}
\newcommand{\M}{\operatorname{M}}
\newcommand{\N}{\operatorname{N}}
\newcommand{\Prob}{\mathbb{P}}
\newcommand{\E}{\mathbb{E}}
\newcommand{\C}{\mathbb{C}}
\renewcommand{\P}{\mathbb{P}}
\renewcommand\Re{\operatorname{Re}}
\renewcommand\Im{\operatorname{Im}}
\newcommand{\eps}{\varepsilon}
\def\R{\mathbb{R}}
\newcommand{\In}{\mathrm{in}}
\newcommand{\Out}{\mathrm{out}}
\newcommand{\Bj}{\mathbf{1}}
\theoremstyle{plain}
  \newtheorem{theorem}{Theorem}[section]
  \newtheorem{lemma}[theorem]{Lemma}
  \newtheorem{corollary}[theorem]{Corollary}
  \newtheorem{proposition}[theorem]{Proposition}
\theoremstyle{definition}
  \newtheorem{definition}[theorem]{Definition}
  \newtheorem{example}[theorem]{Example}
\theoremstyle{remark}
  \newtheorem{remark}[theorem]{Remark}
\newcommand{\abs}[1]{\left\vert#1\right\vert} 
\newcommand{\set}[1]{\left\{#1\right\}} 
\newcommand{\BB}{\mathcal{B}} 
\newcommand{\D}{\mathbb D}
\newcommand{\ind}{\mathbbm 1} 
\begin{document}
\title[Pairing between zeros and critical points]{Pairing between zeros and critical points of random polynomials with independent roots} 

\author{Sean O'Rourke}
\address{Department of Mathematics, University of Colorado at Boulder, Boulder, CO 80309 }
\email{sean.d.orourke@colorado.edu}
\thanks{S. O'Rourke has been supported in part by NSF grant ECCS-1610003.}

\author{Noah Williams}
\address{Department of Mathematics, University of Colorado at Boulder, Boulder, CO 80309 }
\email{noah.williams@colorado.edu}

\begin{abstract}
Let $p_n$ be a random, degree $n$ polynomial whose roots are chosen independently according to the probability measure $\mu$ on the complex plane.  For a deterministic point $\xi$ lying outside the support of $\mu$, we show that almost surely the polynomial $q_n(z):=p_n(z)(z - \xi)$ has a critical point at distance $O(1/n)$ from $\xi$.  In other words, conditioning the random polynomials $p_n$ to have a root at $\xi$ almost surely forces a critical point near $\xi$.  More generally, we prove an analogous result for the critical points of $q_n(z):=p_n(z)(z - \xi_1)\cdots (z - \xi_k)$, where $\xi_1, \ldots, \xi_k$ are deterministic.  In addition, when $k=o(n)$, we show that the empirical distribution constructed from the critical points of $q_n$ converges to $\mu$ in probability as the degree tends to infinity, extending a recent result of Kabluchko \cite{K}.
\end{abstract}

\maketitle

\section{Introduction}
This article deals with the relationship between zeros and critical points of random polynomials in one complex variable.  Recall that a \emph{critical point} of a polynomial $f$ is a root of its derivative $f'$.  There are many results concerning the location of critical points of polynomials whose roots are known.  One of the most famous examples is the Gauss--Lucas theorem, which offers a geometric connection between the roots of a polynomial and the roots of its derivative.  

\begin{theorem}[Gauss--Lucas; Theorem 6.1 from \cite{M}] \label{thm:gauss}
If $f$ is a non-constant polynomial with complex coefficients, then all zeros of $f'$ belong to the convex hull of the set of zeros of $f$.  
\end{theorem}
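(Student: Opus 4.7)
The plan is to derive the result from the partial fraction expansion of the logarithmic derivative $f'/f$. First I would factor $f(z) = c\prod_{j=1}^n(z-z_j)$, where $z_1,\dots,z_n$ are the zeros of $f$ listed with multiplicity, and compute
\begin{equation*}
\frac{f'(z)}{f(z)} = \sum_{j=1}^n \frac{1}{z-z_j}
\end{equation*}
on the complement of $\{z_1,\dots,z_n\}$. This is the only analytic input needed; the rest of the argument is purely algebraic.

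Next, let $w$ be any zero of $f'$. If $w$ coincides with one of the $z_j$, then trivially $w \in \conv\{z_1,\dots,z_n\}$. Otherwise $f(w)\ne 0$, so dividing $f'(w)=0$ by $f(w)$ yields $\sum_{j=1}^n 1/(w-z_j) = 0$. I would then take complex conjugates and use the identity $\overline{1/(w-z_j)} = (w-z_j)/|w-z_j|^2$ to rewrite this as $\sum_{j=1}^n \lambda_j (w-z_j) = 0$ with strictly positive weights $\lambda_j := 1/|w-z_j|^2$. Solving for $w$ gives
\begin{equation*}
w = \frac{\sum_{j=1}^n \lambda_j z_j}{\sum_{j=1}^n \lambda_j},
\end{equation*}
which exhibits $w$ as an explicit convex combination of the zeros of $f$ and thus places $w$ in their convex hull.

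The argument has essentially no technical obstacle; it is classical and almost entirely algebraic. The one point requiring attention is the degenerate case where $w$ coincides with a zero of $f$ (so that the logarithmic derivative is singular at $w$), but this case is dispatched trivially at the outset since $w$ is then already a member of the set whose convex hull is under consideration. Beyond that, the only care needed is to list roots with multiplicity in the factorization so that the partial fraction identity is literally valid as stated.
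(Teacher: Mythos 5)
The paper does not prove the Gauss--Lucas theorem; it simply cites it as a classical result (Theorem 6.1 in Marden's \emph{Geometry of Polynomials}), so there is no internal argument to compare against. Your proof is the standard textbook argument via the logarithmic derivative, and the overall structure is correct: factor $f$, write $f'/f = \sum_j 1/(z-z_j)$, dispose of the case $f(w)=0$ trivially, and turn the vanishing of the logarithmic derivative at $w$ into an expression of $w$ as a convex combination of the roots with weights $\lambda_j = 1/|w-z_j|^2 > 0$.

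One small slip deserves a correction: the conjugation identity should be $\overline{1/(w-z_j)} = \overline{(w-z_j)}/|w-z_j|^2$, not $(w-z_j)/|w-z_j|^2$. Consequently, conjugating $\sum_j 1/(w-z_j)=0$ gives $\sum_j \lambda_j\,\overline{(w-z_j)} = 0$, i.e.\ $\overline{w}\sum_j\lambda_j = \sum_j\lambda_j\overline{z_j}$, which exhibits $\overline{w}$ as a convex combination of the $\overline{z_j}$. Conjugating once more recovers exactly your displayed formula $w = \bigl(\sum_j\lambda_j z_j\bigr)/\bigl(\sum_j\lambda_j\bigr)$. So the final conclusion you wrote is right, but the intermediate step needs the extra bar and one more conjugation at the end. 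This is a cosmetic error, not a logical gap, and the argument otherwise stands as a valid and self-contained proof of the theorem that the paper itself only quotes.
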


There are many refinements of Theorem \ref{thm:gauss}; we refer the reader to \cite{Azero,Bzero,CM,Dzero,Drozero,GRR,Jzero,Mzero,Malzero,Mcon,Pzero,Rzero,RS,Szero,Szero2,Smahler,Tzero} and references therein.  

A probabilistic version of the problem was first studied by Pemantle and Rivin \cite{PR}.  Specifically, Pemantle and Rivin raised the following question.  For a random polynomial $f$, when are the zeros of $f'$ stochastically similar to the zeros of $f$?  Before introducing their results, we fix the following notation.  For a polynomial $f$ of degree $n$, we define the empirical measure constructed from the roots of $f$ as 
$$ \mu_{f} := \frac{1}{n} \sum_{z \in \mathbb{C} : f(z) = 0} \delta_{z}, $$
where each root in the sum is counted with multiplicity and $\delta_z$ is the unit point mass at $z$.  In particular, when $f$ is a random polynomial, $\mu_f$ becomes a random probability measure.  For the critical points of $f$, we introduce the notation
$$ \mu_{f}' := \mu_{f'}. $$
In other words, $\mu_{f}'$ is the empirical measure constructed from the critical points of $f$.  

Let $X_1, X_2,\ldots$ be independent and identically distributed (iid) random variables taking values in $\mathbb{C}$, and let $\mu$ be their common probability distribution.  For each $n \geq 1$, define the polynomial
\begin{equation} \label{eq:pnprod}
	p_n(z) := \prod_{j=1}^n(z-X_j).
\end{equation}
Under the assumption that $\mu$ has finite one-dimensional energy, Pemantle and Rivin \cite{PR} show that $\mu'_{p_n}$ converges weakly to $\mu$ as $n$ tends to infinity.  Let us recall what it means for a sequence of random probability measures to converge weakly. 

\begin{definition}[Weak convergence of random probability measures]
Let $T$ be a topological space (such as $\mathbb{R}$ or $\mathbb{C}$), and let $\mathcal{B}$ be its Borel $\sigma$-field.  Let $(\mu_n)_{n\geq 1}$ be a sequence of random probability measures on $(T,\mathcal{B})$, and let $\mu$ be a probability measure on $(T,\mathcal{B})$.  We say \emph{$\mu_n$ converges weakly to $\mu$ in probability} as $n \to \infty$ (and write $\mu_n \to \mu$ in probability) if for all bounded continuous $f:T \to \mathbb{R}$ and any $\eps > 0$,
$$ \lim_{n \to \infty} \Prob \left( \left| \int f d\mu_n - \int f d\mu \right| > \eps \right) = 0. $$
In other words, $\mu_n \to \mu$ in probability as $n \to \infty$ if and only if $\int f d\mu_n \to \int f d\mu$ in probability for all bounded continuous $f: T \to \mathbb{R}$.  Similarly, we say \emph{$\mu_n$ converges weakly to $\mu$ almost surely} as $n \to \infty$ (and write $\mu_n \to \mu$ almost surely) if for all bounded continuous $f:T \to \mathbb{R}$,
$$ \lim_{n \to \infty} \int f d\mu_n = \int f d\mu $$
almost surely.   
\end{definition}

Kabluchko \cite{K} generalized the results of Pemantle and Rivin to the following.  

\begin{theorem}[Kabluchko; \cite{K}] \label{thm:PMK}
Let $\mu$ be an arbitrary probability measure on $\mathbb{C}$, and let $X_1, X_2, \ldots$ be a sequence of iid random variables with distribution $\mu$.  For each $n \geq 1$, let $p_n$ be the degree $n$ polynomial given in \eqref{eq:pnprod}.  Then $\mu'_{p_n}$ converges weakly to $\mu$ in probability as $n \to \infty$.  
\end{theorem}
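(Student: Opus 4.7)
The plan is to use the logarithmic potential method. Recall that if $\nu_n$ is a sequence of probability measures on $\mathbb{C}$ with logarithmic potentials $U_{\nu_n}(z) := -\int \log|z-w|\, d\nu_n(w)$, then $U_{\nu_n} \to U_\nu$ in $L^1_{\mathrm{loc}}(\mathbb{C})$ implies $\nu_n \to \nu$ weakly, because $\nu = -\frac{1}{2\pi}\Delta U_\nu$ in the distributional sense. So it suffices to establish $L^1_{\mathrm{loc}}$ convergence in probability of $U_{\mu'_{p_n}}$ to $U_\mu$. The key algebraic identity is $p_n'(z) = p_n(z)\, S_n(z)$ where $S_n(z) := \sum_{j=1}^{n}(z-X_j)^{-1}$; accounting for the leading coefficient $n$ of $p_n'$, this yields
\[
U_{\mu'_{p_n}}(z) \;=\; \frac{n}{n-1}\, U_{\mu_{p_n}}(z) \;-\; \frac{1}{n-1}\log|S_n(z)| \;+\; \frac{\log n}{n-1}.
\]

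First I would handle each term on the right at Lebesgue-a.e.\ fixed $z$. For the first term, the strong law of large numbers applied to the iid sequence $-\log|z-X_j|$ gives $U_{\mu_{p_n}}(z) \to U_\mu(z)$ in probability whenever the integrand is $\mu$-integrable; by Fubini this holds for Lebesgue-a.e.\ $z$. The third term is deterministic and tends to $0$. For the middle term, the upper bound $|S_n(z)| \leq n \max_j |z-X_j|^{-1}$ combined with $\mu$-integrability of $\log^+|z-X_1|^{-1}$ (again for a.e.\ $z$) shows $\log|S_n(z)| = O(\log n)$ with high probability. The lower bound comes from $\tfrac{1}{n}S_n(z) \to G_\mu(z)$ in probability, where $G_\mu$ is the Cauchy transform of $\mu$, so that at any $z$ with $G_\mu(z) \neq 0$ one has $|S_n(z)| \asymp n$. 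Since $G_\mu$ is holomorphic and nontrivial off $\operatorname{supp}\mu$, its zero set has Lebesgue measure zero there.

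Next I would upgrade pointwise convergence in probability to $L^1_{\mathrm{loc}}$ convergence in probability. Because $U_{\mu'_{p_n}}$ is the logarithmic potential of a probability measure supported in the convex hull of $X_1, \ldots, X_n$ (by the Gauss--Lucas theorem), one obtains uniform local integrability in expectation; a Vitali-type convergence argument then converts a.e.\ pointwise convergence in probability into $L^1_{\mathrm{loc}}$ convergence in probability. Taking the distributional Laplacian concludes the proof.

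The main obstacle is the lower bound on $|S_n(z)|$: one must uniformly rule out events on which $|S_n(z)|$ is exponentially small in $n$. For $z \notin \operatorname{supp}\mu$ this is relatively clean since $G_\mu$ is analytic, but for $z \in \operatorname{supp}\mu$ the Cauchy transform need not be defined and $S_n(z)$ can be dominated by one or two anomalous summands $(z-X_j)^{-1}$ with near-cancellation. Handling this likely requires a small-ball/anti-concentration estimate for $S_n(z)$, or alternatively a regularization of $\mu$ by convolution with a smooth approximate identity followed by a continuity argument in $\mu$. A secondary technical subtlety is treating the polar set on which $U_\mu(z) = +\infty$, where the naive law of large numbers fails; standard truncation arguments restore pointwise convergence there.
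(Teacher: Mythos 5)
Your proposal follows essentially the same route as the paper's proof of Theorem \ref{thm:genkabluchko} in Appendix \ref{sec:genkabluchko} (which specializes to Theorem \ref{thm:PMK} at $k_n=0$): the identity $p_n' = p_n L_n$, pointwise control of $\frac{1}{n}\log|L_n(z)|$, passage to $L^1_{\mathrm{loc}}$ convergence, and the distributional Laplacian. Your framing via logarithmic potentials is a cosmetic repackaging of the same computation. However, two of your steps need to be tightened, and the way you propose to tighten the first one is not quite right.

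The claim that the zero set of $G_\mu$ off $\supp(\mu)$ has Lebesgue measure zero is false in general, so the ``clean'' Cauchy-transform lower bound for $|S_n(z)|$ fails on a potentially large set. For instance, if $\mu$ is the uniform measure on the unit circle, then $G_\mu \equiv 0$ on the entire open unit disk, which is disjoint from $\supp(\mu)$. In that regime $\E[S_n(z)] = 0$ and yet one still needs $|S_n(z)| \gg e^{-\eps n}$ with high probability; a L\'evy concentration function estimate is genuinely required, not merely as a patch for $z\in\supp(\mu)$. The paper (following Kabluchko) bypasses the Cauchy transform entirely and applies the Petrov-type anti-concentration bound (Lemma \ref{lma:anticoncentration}) uniformly: for any fixed $z$ outside a measure-zero exceptional set, $1/(z-X_1)$ is a non-degenerate random variable, and $\mathcal{L}(S_n(z), e^{-\eps n}) \ll n^{-1/2}$. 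This is both simpler and correct; you should drop the $G_\mu$ dichotomy.

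The second concern is your uniform-integrability step. Since $\mu$ is an \emph{arbitrary} probability measure, $\supp(\mu)$ need not be compact, and the random convex hull of $X_1,\ldots,X_n$ is typically unbounded as $n\to\infty$. The bound $\int_{|z|<R}\big|\log|z-w|\big|\,d\lambda(z) \asymp \log|w|$ for large $|w|$ shows that ``supported in the convex hull'' does not by itself give uniform local $L^1$ bounds in expectation. The paper instead establishes tightness of $n^{-2}\int_{\D_r}\log^2|L_n|\,d\lambda$ via the Poisson--Jensen formula (Lemmas \ref{lma:kabl:unifzBound}--\ref{lma:kabl:bdpintbelow}), and invokes the Tao--Vu dominated-convergence criterion (Lemma \ref{lma:kabl:tauvu}) to convert pointwise convergence in probability into $L^1_{\mathrm{loc}}$ convergence in probability. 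Your Vitali-type argument is in the right spirit but needs that Poisson--Jensen machinery to actually produce a dominating bound.
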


Subramanian, in \cite{S}, verified a special case of Theorem \ref{thm:PMK} when $\mu$ is supported on the unit circle in the complex plane.  

Naturally, one may ask whether the assumptions in Theorem \ref{thm:PMK} (such as the roots $X_1, X_2, \ldots$ being independent) can be relaxed.  In \cite{O}, the first author managed to prove a version of Theorem \ref{thm:PMK} for random polynomials with dependent roots provided the roots lie on the unit circle and satisfy a number of technical conditions.  In particular, the results in \cite{O} apply to characteristic polynomials of random unitary matrices and other matrices from the classical compact groups (the eigenvalues of such matrices are known to not be independent). Similar results for characteristic polynomials of nearly Hermitian matrices were studied in \cite[Section 2.5]{OW}.  In \cite{TRR}, Reddy considers polynomials whose zeros are chosen randomly from two deterministic sequences of complex numbers in which the empirical measures for both sequences converge to the same limit.  It is shown that the limiting empirical measure of the zeros and critical points agree for these polynomials, yielding a version of Theorem \ref{thm:PMK} where the randomness can be reduced and independence still remains.  However, as the following example shows, the randomness in Theorem \ref{thm:PMK} cannot be completely eliminated (i.e., the theorem does not always hold for sequences of deterministic polynomials).  
\begin{example} 
Let $p_n(z) := z^n - 1$.  Then the roots of $p_n$ are the $n$-th roots of unity, and so $\mu_{p_n}$ converges weakly to the uniform measure on the unit circle as $n$ tends to infinity.  However, all $n-1$ critical points of $p_n$ are located at the origin.  Hence, $\mu'_{p_n} = \delta_0$ for all $n$.  
\end{example}

\subsection{Asymptotic notation}
We use asymptotic notation (such as $O,o$) under the assumption that $n \to \infty$.  In particular, $X= O(Y)$, $Y = \Omega(X)$, $X \ll Y$, and $Y \gg X$ denote the estimate $|X| \leq C Y$, for some constant $C > 0$ independent of $n$ and for all $n \geq C$.  If we need the constant $C$ to depend on another constant, e.g. $C = C_k$, we indicate this with subscripts, e.g. $X = O_{k}(Y)$, $Y = \Omega_k(X)$, $X \ll_k Y$, and $Y\gg_k X$.  We write $X = o(Y)$ if $|X| \leq c(n) Y$ for some $c(n)$ that goes to zero as $n \to \infty$.  Specifically, $o(1)$ denotes a term which tends to zero as $n \to \infty$.

\section{Main results}

To introduce our results, we first consider a special case of the polynomial $p_n$, defined in \eqref{eq:pnprod}, when $\mu$ is the uniform probability measure on the unit circle centered at the origin.  In this case, Theorem \ref{thm:PMK} implies that $\mu_{p_n}'$ converges weakly in probability to $\mu$ as $n \to \infty$.  A numerical simulation of this result is shown in Figure \ref{fig:IIDCirc}; as can be seen, all critical points of $p_n'$ lie very close to the unit circle.  On the other hand, if we consider the polynomial $(z-\xi)p_n(z)$ for some deterministic point $\xi$ outside the unit circle, we see in Figure \ref{fig:IIDCircDetOut} that one of the critical points leaves the unit disk and lies very close to $\xi$.  However, the remaining critical points still lie close to the unit circle.  


\begin{figure}
\includegraphics[width =.8\columnwidth]{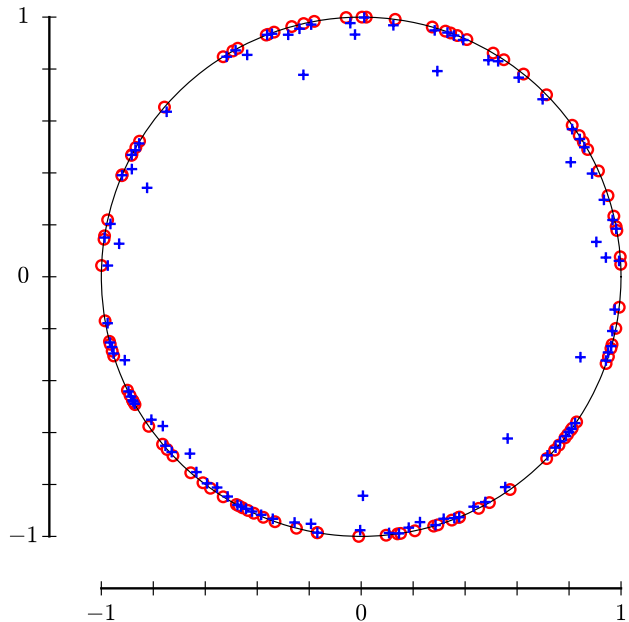}
\caption{The roots (red circles) and critical points (blue crosses) of a random, degree 100 polynomial, where all $100$ roots are chosen independently and uniformly on the unit circle (black curve).}
\label{fig:IIDCirc}
\end{figure}

\begin{figure}
\includegraphics[width =.8\columnwidth]{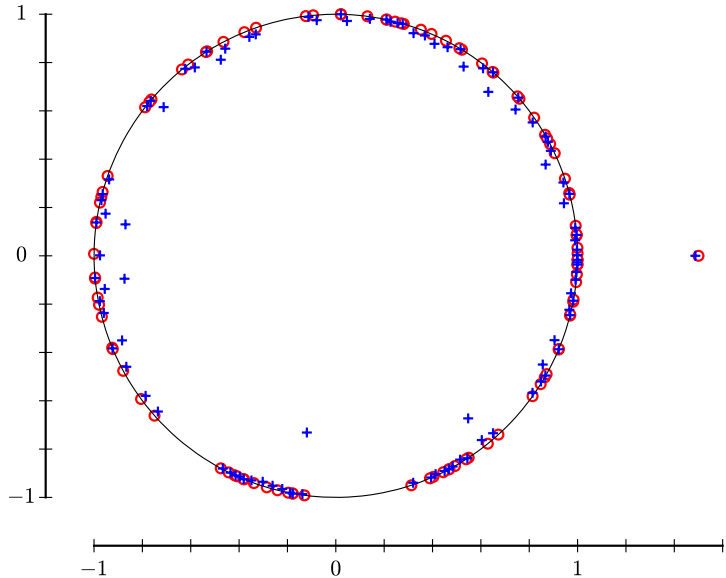}
\caption{The roots (red circles) and critical points (blue crosses) of a random, degree $101$ polynomial, where $100$ roots are chosen independently and uniformly on the unit circle (black curve), and one root takes the deterministic value $\xi = 1.5$.}
\label{fig:IIDCircDetOut}
\end{figure}

\begin{figure}
\includegraphics[width =.8\columnwidth]{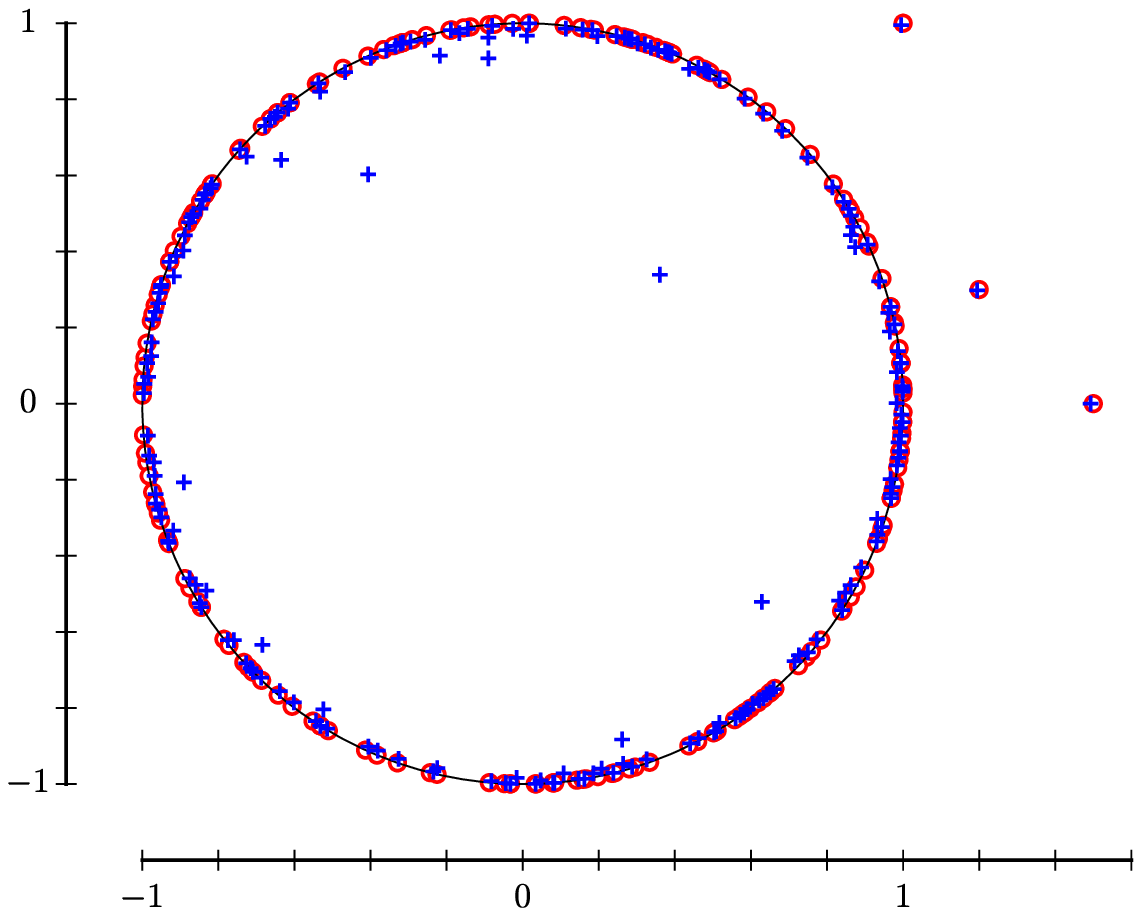}
\caption{The roots (red circles) and critical points (blue crosses) of a random, degree $203$ polynomial, where $200$ roots are chosen independently and uniformly on the unit circle (black curve), and three roots take the deterministic values $\xi_1 = 1 + i$, $\xi_2 = 1.5$, and $\xi_3 = 1.2+ 0.3i$.}
\label{fig:IIDCirc3DetOut}
\end{figure}

The goal of this note is to describe the pairing between the root $\xi$ and the nearby critical point.  More generally, we consider the case when several deterministic zeros are appended to the random polynomial $p_n$ and when $\mu$ is an arbitrary measure in the complex plane with compact support (not just the uniform distribution on the unit circle). See, for example, Figures \ref{fig:IIDCirc3DetOut} and \ref{fig:distinguished:IIDBlob}.

Let us mention that this pairing phenomenon between roots and critical points has been observed previously for random polynomials.  Hanin \cite{H2} proves a similar pairing result when a number of deterministic roots are appended to a random polynomial whose roots are chosen independently from a probability measure $\mu$ supported on the Riemann sphere.  Hanin's proof is guided by an intuitive electrostatic interpretation of the zeros and critical points.  In contrast to many of our results, Hanin's proof works both when $\mu$ is supported on a compact subset and when $\mu$ is supported on the entire Riemann sphere.  Unlike the results in \cite{H2} however, our results do not require the measure $\mu$ to have bounded density or require the deterministic roots to satisfy a separation condition.  In addition, our methods are significantly different than those used in \cite{H2} and allow us to describe the exact number of critical points lying in a region outside the support of $\mu$.  In a separate paper \cite{H1}, Hanin considers the joint distribution of roots and critical points for a class of Gaussian random polynomials.  However, the polynomials considered in \cite{H1} are quite different than the model considered in this paper.  Finally, let us mention the work of Dennis and Hannay \cite{DH} from the physics literature, which gives an electrostatic explanation for the pairing of critical points and zeros of random polynomials and characteristic polynomials of random matrices.

\subsection{Limiting distribution of the critical points}
To begin, we first consider the analogue of Theorem \ref{thm:PMK} when $o(n)$ deterministic zeros are appended to the random polynomial $p_n$ in \eqref{eq:pnprod}. 

\begin{theorem}[Limiting distribution of the critical points] \label{thm:genkabluchko}
Let $\mu$ be an arbitrary probability measure on $\mathbb{C}$, and suppose $X_1, X_2, \ldots$ are iid random variables with distribution $\mu$.  For each $n \geq 1$, let $k_n$ be a deterministic non-negative integer no larger than $n$ such that $k_n = o(n)$.  In addition, let $\xi_1^{(n)}, \ldots, \xi_{k_n}^{(n)}$ be a deterministic triangular array of complex values, and let
\[
p_n(z) := \prod_{j=1}^{n-{k_n}} (z - X_j) \prod_{l=1}^{k_n} (z - \xi_l^{(n)}).
\]
Then $\mu_{p_n}'$ converges weakly to $\mu$ in probability as $n \to \infty$.  
\end{theorem}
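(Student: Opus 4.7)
My plan is to reduce Theorem~\ref{thm:genkabluchko} to Kabluchko's Theorem~\ref{thm:PMK} by first discarding the deterministic roots. Set
\[
\tilde p_n(z) := \prod_{j=1}^{m_n}(z - X_j), \qquad m_n := n - k_n.
\]
Because $k_n = o(n)$ forces $m_n \to \infty$, Theorem~\ref{thm:PMK} applied to $\tilde p_n$ already gives $\mu'_{\tilde p_n} \to \mu$ weakly in probability. The remaining task is to show that appending the extra factor $r_n(z) := \prod_{l=1}^{k_n}(z - \xi_l^{(n)})$, yielding $p_n = \tilde p_n r_n$, perturbs the empirical measure of critical points by at most $o(1)$ in the weak topology.

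I would carry out this comparison through logarithmic potentials. Define
\[
V_n(z) := \int\log|z-w|\,d\mu'_{p_n}(w) = \frac{\log|p_n'(z)|-\log n}{n-1},
\]
with $\tilde V_n$ defined analogously from $\tilde p_n'$, and let $U_\mu(z) := \int\log|z-w|\,d\mu(w)$. By the standard correspondence between weak convergence of zero-counting measures of polynomials and $L^1_{\text{loc}}$ convergence of their normalized log-moduli (via the distributional Laplacian), it suffices to show $V_n \to U_\mu$ in $L^1_{\text{loc}}$ in probability, and Kabluchko's proof delivers $\tilde V_n \to U_\mu$ in this sense. Using $p_n'/p_n = \tilde p_n'/\tilde p_n + r_n'/r_n$ together with $p_n = \tilde p_n r_n$, one computes
\[
(n-1)V_n(z) - (m_n-1)\tilde V_n(z) = \log|r_n(z)| + \log\!\left|1 + \frac{\tilde p_n(z)\,r_n'(z)}{\tilde p_n'(z)\,r_n(z)}\right| + (\log m_n - \log n),
\]
and the plan reduces to showing that each of the two nontrivial terms on the right, divided by $n-1$, vanishes in $L^1_{\text{loc}}$ in probability.

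The main obstacle is controlling $(n-1)^{-1}\log|r_n(z)|$ when the deterministic roots $\xi_l^{(n)}$ are not uniformly bounded, since on a compact $K$ the quantity $\|\log|z-\xi|\|_{L^1(K)}$ grows like $\log|\xi|$ as $|\xi|\to\infty$. My plan is a truncation at threshold $R$: the $\xi_l^{(n)}$ with $|\xi_l^{(n)}| \leq R$ contribute at most $(k_n/(n-1))\cdot C_K(R) = o(1)$ to $\|V_n - \tilde V_n\|_{L^1(K)}$, because $\|\log|z-\xi|\|_{L^1(K)}$ is uniformly bounded for $|\xi| \leq R$; the $\xi_l^{(n)}$ with $|\xi_l^{(n)}| > R$ I would instead handle at the level of the measures rather than the potentials, by applying Rouché's theorem to $p_n'/p_n$ on a small disk around each such $\xi_l^{(n)}$ to isolate a single stray critical point of $p_n$ trapped near $\xi_l^{(n)}$; these at most $k_n$ stray atoms have joint weight $O(k_n/n) = o(1)$ in $\mu'_{p_n}$, hence a negligible contribution to $\int f\,d\mu'_{p_n}$ for any bounded continuous $f$. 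The remainder $\log|1 + \tilde p_n r_n'/(\tilde p_n' r_n)|$ should be tractable after showing that the inner ratio is bounded on a high-probability event, by invoking LLN estimates for $\frac{1}{m_n}\log|\tilde p_n|$ and $\frac{1}{m_n}\log|\tilde p_n'|$ of the kind already used in Kabluchko's proof.
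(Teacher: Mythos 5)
Your reduction runs into a fundamental obstruction: the potential $U_\mu(z)=\int\log|z-w|\,d\mu(w)$ is in general not a locally integrable function for an arbitrary probability measure $\mu$. By the Fubini--Tonelli theorem, $\int_K\int|\log|z-w||\,d\mu(w)\,d\lambda(z)$ has a far-field contribution comparable to $\lambda(K)\,\E\log_+|X_1|$, which diverges whenever $\mu$ has heavy tails; Theorem~\ref{thm:genkabluchko} is stated for arbitrary $\mu$, so comparing $V_n$ with $U_\mu$ cannot close the argument in general. The attribution is also off: Kabluchko's proof does not show $\tilde V_n\to U_\mu$ in $L^1_{\text{loc}}$. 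What it shows (and what Appendix~\ref{sec:genkabluchko} extends) is that $\frac{1}{n}\log|L_n|\to 0$, i.e.\ that the logarithmic potentials of $\mu'_{p_n}$ and $\mu_{p_n}$ become asymptotically equal --- a comparison in which the far-field divergences cancel --- and this is then combined with $\mu_{p_n}\to\mu$ from the law of large numbers. Your decomposition $p_n'/p_n=\tilde p_n'/\tilde p_n+r_n'/r_n$ is exactly the right object, but the target should be $\frac{1}{n}\log|L_n|\to 0$ rather than $V_n\to U_\mu$.

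Beyond this, you concentrate on $\xi_l^{(n)}$ escaping to infinity, but the more delicate pathology is the opposite one: $\xi_l^{(n)}$ accumulating near a fixed point $z$, which can make $\frac{1}{n}\sum_l\log_-|z-\xi_l^{(n)}|$ fail to vanish and destroys the pointwise estimate for $\frac{1}{n}\log|L_n(z)|$. The paper's proof handles this with a subsequence device: Theorem~\ref{thm:kabl:detCond} proves the result under the auxiliary hypothesis~\eqref{eqn:kabl:detCond}, and then Markov's inequality together with the local integrability of $\log_-|\cdot|$ is used to extract, from any given subsequence, a further subsequence along which \eqref{eqn:kabl:detCond} holds a.e.; Lemma~\ref{lma:kabl:subCriterion} then upgrades this to convergence of the full sequence. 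Your truncation-plus-Rouch\'e plan does not address near-crowding at all, and the Rouch\'e step itself needs more care: isolating one stray critical point per far-away $\xi_l^{(n)}$ requires dominating $\tilde p_n'/\tilde p_n$ by $r_n'/r_n$ on a circle around each such $\xi_l^{(n)}$, which is not automatic when $\supp(\mu)$ is unbounded or when several $\xi_l^{(n)}$ cluster, and once you discard those stray atoms at the measure level you lose the clean factorization you rely on for the potential comparison on what remains.
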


Theorem \ref{thm:genkabluchko} is a generalization of Theorem \ref{thm:PMK}.  Indeed, Theorem \ref{thm:PMK} can be recovered from Theorem \ref{thm:genkabluchko} by taking $k_n = 0$.  Unsurprisingly, we prove Theorem \ref{thm:genkabluchko} in Appendix \ref{sec:genkabluchko} by slightly generalizing the methods developed by Kabluchko in \cite{K}.  

Let us discuss the intuition behind Theorem \ref{thm:genkabluchko}.  To do so, we must begin with Theorem \ref{thm:PMK}.  Roughly speaking, Theorem \ref{thm:PMK} describes the phenomenon that if $p_n$ is a degree $n$ random polynomial, then 
\begin{equation} \label{eq:pnintuition}
	\mu_{p_n} - \mu'_{p_n} \longrightarrow 0 
\end{equation}
in probability as $n \to \infty$.  In other words, the limiting behavior of the critical points is the same as the limiting behavior of the roots.  While Theorem \ref{thm:PMK} only applies to random polynomials with iid roots, the same phenomenon has been observed for other ensembles of random polynomials \cite{O, OW}, and numerical simulations show that it should be true for many other models.  Stated another way, the behavior in \eqref{eq:pnintuition} appears to be universal among random polynomials.  Let us now consider the polynomial $p_n$ from Theorem \ref{thm:genkabluchko}.  It follows from the law of large numbers that $\mu_{p_n} \to \mu$ weakly almost surely as $n \to \infty$ since $k_n=o(n)$.  Therefore, if the convergence in \eqref{eq:pnintuition} applies to the polynomial $p_n$, the triangle inequality would immediately imply that $\mu_{p_n}'$ also converges weakly to $\mu$ in probability.  This heuristic is the basis for our proof of Theorem \ref{thm:genkabluchko}.

The above heuristic also hints that the condition $k_n = o(n)$ in Theorem \ref{thm:genkabluchko} is sharp.  Indeed, if $\lceil \eps n \rceil$ deterministic roots were to be appended, the limiting distribution is, in general, not $\mu$ as shown by the following example.
\begin{example}
Let $0 < \eps < 1$ and $k_n := \lceil\eps n \rceil$.  Define 
$$ p_n(z) := \prod_{j=1}^{n-k_n} (z - X_j), $$
where $X_1, X_2, \ldots$ are iid random variables uniformly distributed on the unit circle centered at the origin in the complex plane.  Then, by Theorem \ref{thm:PMK}, $\mu'_{p_n}$ converges weakly to the uniform measure on the unit circle in probability as $n \to \infty$.  However, the polynomial
$$ q_n(z) := z^{k_n} p_n(z) $$
has at least $k_n - 1$ critical points at the origin.  In particular, $\mu'_{q_n}(\{0\}) \geq \eps/2$ for $n$ sufficiently large.  Among other things, this implies that $\mu'_{q_n}$ does not converge weakly to the uniform probability measure on the unit circle as $n \to \infty$.  
\end{example}

While Theorem \ref{thm:genkabluchko} shows that the global behavior of the critical points is unchanged by the addition of $o(n)$ deterministic roots, the addition of one or more deterministic roots can create a number of outlying critical points as illustrated in Figures \ref{fig:IIDCircDetOut} and \ref{fig:IIDCirc3DetOut}.  One way of viewing this phenomenon is to view the deterministic roots as a small perturbation of the original polynomial.  This small perturbation is not enough to change the global distribution of the critical points; it may, however, as observed in the figures above, create a small number of outlying critical points.  Our main results below describe these outliers.  

\subsection{No outlying critical points for the unperturbed model}
Before we consider the perturbed model, we first consider the case when there are no deterministic roots.  In this initial case, we want to determine exactly where the critical points of the random polynomial $p_n$, defined in  \eqref{eq:pnprod}, are located.  This way, when we do append the small perturbation of deterministic roots, we will be able to tell exactly what effect the perturbation has had.  

Let $\mu$ be a probability measure on $\mathbb{C}$, and suppose $X_1, \ldots, X_n$ are iid random variables with distribution $\mu$.  In view of the Gauss--Lucas theorem (Theorem \ref{thm:gauss}), the roots of $p_n(z) := \prod_{j=1}^n (z - X_j)$, must lie in $\conv(\supp(\mu))$, the convex hull of the support of $\mu$. However, as we discussed above in the case when $\mu$ is supported on the unit circle (shown in Figure \ref{fig:IIDCirc}), nearly all of the critical points appear near the support of $\mu$, which is only a small subset of the convex hull.  Thus, our goal is to determine the exact subset of $\conv(\supp(\mu))$ where the critical points will lie, with high probability.  We do so in the theorem below.  To define this set where the critical points are located, we will first need to introduce the Cauchy--Stieltjes transform.  

Let $\mu$ be a probability measure on $\mathbb{C}$, and let $m_{\mu}$ be the Cauchy--Stieltjes transform of $\mu$ defined by
$$ m_{\mu}(z) := \int_{\mathbb{C}} \frac{ d \mu(x) }{z - x}, \quad z \not\in \supp(\mu). $$
Also, define
$$ \M_{\mu} := \left\{ z \in \mathbb{C} \setminus \supp(\mu) : m_{\mu}(z) = 0 \right\} $$
to be the set of zeros of $m_{\mu}$.  If $\mu$ has compact support, it turns out that $M_{\mu} \subset \conv(\supp(\mu))$; see Proposition \ref{prop:Mmu} for details.  For $\eps > 0$, we also define the set 
$$ \N_{\mu}(\eps) := \left\{ z \in \mathbb{C} : \dist(z, \supp(\mu) \cup M_{\mu}) < \eps \right\} $$
to be the $\eps$-neighborhood of $\supp(\mu) \cup M_{\mu}$.  Here, $\dist(z, D) := \inf_{w \in D} |z - w|$ is the distance from $z \in \mathbb{C}$ to a set $D \subset \mathbb{C}$.  

The following theorem shows that all critical points of $p_n$ must lie inside $\N_{\mu}(\eps)$  with high probability.  

\begin{theorem}[No outliers in the unperturbed model] \label{thm:nooutliers}
Let $\mu$ be a probability measure on $\mathbb{C}$ with compact support, and suppose $X_1, \ldots, X_n$ are iid random variables with distribution $\mu$.  Then, for every $\eps > 0$, there exists $C, c > 0$ (depending only on $\mu$ and $\eps$) such that, with probability at least $1 - C e^{-cn}$, the polynomial $p_n(z) := \prod_{j=1}^n(z - X_j)$ has no critical points outside $N_{\mu}(\eps)$.  
\end{theorem}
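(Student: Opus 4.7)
The plan is to use the standard logarithmic-derivative identity: for $z\notin\supp(\mu)$,
$$\frac{p_n'(z)}{p_n(z)}=\sum_{j=1}^n\frac{1}{z-X_j}=n\,m_{\mu_n}(z),$$
where $\mu_n:=\frac{1}{n}\sum_j\delta_{X_j}$ is the empirical measure. Since $p_n(z)\neq 0$ for $z\notin\{X_1,\dots,X_n\}$, and $\{X_j\}\subset\supp(\mu)$, a point $z$ outside the $\varepsilon$-neighborhood of $\supp(\mu)$ is a critical point of $p_n$ if and only if $m_{\mu_n}(z)=0$. So I need to show that, with exponentially high probability, $m_{\mu_n}$ has no zero in the relevant compact region.

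By the Gauss--Lucas theorem all critical points lie in $\conv(\supp(\mu))$, which is compact because $\mu$ has compact support. Hence the set to control is
$$K:=\conv(\supp(\mu))\setminus N_\mu(\varepsilon),$$
and in particular $\dist(K,\supp(\mu))\geq\varepsilon$ and $\dist(K,M_\mu)\geq\varepsilon$. On $K$ the function $m_\mu$ is holomorphic and vanishes nowhere (using the assumed description $M_\mu=\{z\in\mathbb{C}\setminus\supp(\mu):m_\mu(z)=0\}$ from Proposition \ref{prop:Mmu}), so by continuity and compactness there is a deterministic constant $\delta=\delta(\mu,\varepsilon)>0$ with $|m_\mu(z)|\geq\delta$ for all $z\in K$. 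It therefore suffices to prove that $\sup_{z\in K}|m_{\mu_n}(z)-m_\mu(z)|<\delta/2$ with probability $1-Ce^{-cn}$.

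For each fixed $z\in K$, $m_{\mu_n}(z)$ is an average of $n$ iid complex random variables $1/(z-X_j)$ each bounded by $1/\varepsilon$ in modulus and with mean $m_\mu(z)$. Splitting into real and imaginary parts and applying Hoeffding's inequality gives
$$\Prob\!\left(|m_{\mu_n}(z)-m_\mu(z)|>\tfrac{\delta}{4}\right)\leq 4\exp(-c_1 n\delta^2 \varepsilon^2)$$
for an absolute constant $c_1>0$. Next, a derivative computation shows $m_{\mu_n}$ and $m_\mu$ are both $\varepsilon^{-2}$-Lipschitz on $K$, since $|\partial_z(z-x)^{-1}|=|z-x|^{-2}\leq\varepsilon^{-2}$ uniformly in $x\in\supp(\mu)$. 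Cover the compact set $K$ by a finite $\eta$-net with $\eta:=\delta\varepsilon^2/16$; since $K$ is bounded, this net has cardinality at most $C_0\eta^{-2}$, a constant independent of $n$. A union bound over the net yields
$$\Prob\!\left(\sup_{z\in K}|m_{\mu_n}(z)-m_\mu(z)|>\tfrac{\delta}{2}\right)\leq 4C_0\eta^{-2}\exp(-c_1 n\delta^2\varepsilon^2)\leq C e^{-cn},$$
absorbing the polynomial net factor into the exponential. On the complementary event, $|m_{\mu_n}(z)|\geq\delta/2>0$ for every $z\in K$, so $p_n$ has no critical points outside $N_\mu(\varepsilon)$.

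The main obstacle, which I am assuming via Proposition \ref{prop:Mmu}, is ensuring that $m_\mu$ is bounded below on $K$ (in particular that $M_\mu$ is well-behaved enough that the complement of $\supp(\mu)\cup M_\mu$ contains no sequence along which $|m_\mu|\to 0$). Once that qualitative lower bound is in hand, the remainder is a routine Hoeffding-plus-net argument tailored to the uniform bound $|z-X_j|\geq\varepsilon$ available throughout $K$.
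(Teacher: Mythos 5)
Your proof is correct and takes essentially the same route as the paper: convert critical points outside $\supp(\mu)$ to zeros of the logarithmic derivative $m_n = \frac{1}{n}p_n'/p_n$, restrict via Gauss--Lucas to the compact set $K=\conv(\supp(\mu))\setminus N_\mu(\eps)$, use compactness to get a deterministic lower bound $|m_\mu|\ge\delta$ on $K$, and then establish uniform concentration of $m_n$ around $m_\mu$ over $K$ via pointwise Hoeffding, a Lipschitz estimate, an $\eta$-net, and a union bound. The paper packages the concentration step as a standalone lemma (Lemma~\ref{lemma:lln}), but the estimates and constants are of the same flavor as yours.
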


\begin{remark}
By the Gauss--Lucas theorem (Theorem \ref{thm:gauss}), the critical points of $p_n$ must lie inside $\conv(\supp(\mu))$.  Thus, Theorem \ref{thm:nooutliers} actually reveals that, with high probability, $p_n$ has no critical points outside $N_{\mu}(\eps) \cap \conv(\supp(\mu))$.  
\end{remark}

We now justify our choice of the set $\N_{\mu}(\eps)$ as the correct location of the critical points.  First, in the case that $\mu$ is degenerate, $p_n(z) = (z - a)^n$ for some $a \in \mathbb{C}$, which has critical point $z = a$ with multiplicity $n-1$.  This example shows that clearly the critical points of $p_n$ may lie in $\supp(\mu)$.  The next example shows that the critical points can also be in a neighborhood of the zero set $M_{\mu}$.  

\begin{example}
Let $\mu := p \delta_a + (1-p) \delta_b$ for some $a, b \in \mathbb{C}$ with $a \neq b$ and $p \in (0,1)$, and assume $X_1, X_2, \ldots$ are iid random variables with distribution $\mu$.  Then 
$$ p_n(z) := \prod_{j=1}^n (z - X_j) = (z - a)^\alpha (z-b)^\beta $$
for some non-negative integers $\alpha, \beta$ with $\alpha + \beta = n$.  Almost surely, for $n$ sufficiently large, $\alpha, \beta \geq 1$, and, in this case, 
$$ p_n'(z) = (z-a)^{\alpha-1} (z-b)^{\beta -1} \left(nz - \alpha b - \beta a \right).  $$
Thus, by the law of large numbers, $p_n$ has a critical point at
$$ z = \frac{\alpha b}{n} + \frac{\beta a}{n} = pb + (1-p) a + o(1) $$
almost surely.  On the other hand, 
$$ m_{\mu}(z) = \frac{p}{z-a} + \frac{1-p}{z-b} $$
has exactly one zero located at $z = pb + (1-p)a$.  
\end{example}

By the Borel--Cantelli lemma, Theorem \ref{thm:nooutliers} immediately implies the following corollary.  

\begin{corollary} \label{cor:nooutliers}
Let $\mu$ be a probability measure on $\mathbb{C}$ with compact support, and suppose $X_1, X_2, \ldots$ are iid random variables with distribution $\mu$.  Fix $\eps > 0$.  Then, almost surely, for $n$ sufficiently large, the polynomial $p_n(z) := \prod_{j=1}^n(z - X_j)$ has no critical points outside $N_{\mu}(\eps)$.  
\end{corollary}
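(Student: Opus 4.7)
The plan is to deduce the corollary directly from Theorem \ref{thm:nooutliers} via a standard application of the (first) Borel--Cantelli lemma, using that the failure probabilities furnished by Theorem \ref{thm:nooutliers} decay exponentially in $n$ and are therefore summable.

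More concretely, fix $\eps > 0$ and, for each $n \geq 1$, let $A_n$ denote the event that the polynomial $p_n(z) := \prod_{j=1}^n (z - X_j)$ has at least one critical point outside $N_\mu(\eps)$. By Theorem \ref{thm:nooutliers} applied with this fixed $\eps$, there exist constants $C, c > 0$ (depending only on $\mu$ and $\eps$) such that
\[
\Prob(A_n) \leq C e^{-c n} \quad \text{for every } n \geq 1.
\]
In particular, $\sum_{n=1}^{\infty} \Prob(A_n) \leq C \sum_{n=1}^\infty e^{-cn} < \infty$.

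The Borel--Cantelli lemma then yields $\Prob(A_n \text{ i.o.}) = 0$, i.e., almost surely only finitely many of the events $A_n$ occur. Equivalently, there is an almost sure event on which, for all sufficiently large $n$, the polynomial $p_n$ has no critical points outside $N_\mu(\eps)$. This is exactly the conclusion of the corollary.

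There is essentially no obstacle here; the entire content has been placed in Theorem \ref{thm:nooutliers}, and the corollary is a routine Borel--Cantelli consequence. The only mild subtlety to flag is that $\eps$ is fixed before applying Borel--Cantelli, so the exceptional null set is allowed to depend on $\eps$, which is consistent with the statement of the corollary (where $\eps$ is fixed at the outset).
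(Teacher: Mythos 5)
Your proof is correct and matches the paper's approach exactly: the paper states the corollary follows immediately from Theorem \ref{thm:nooutliers} by the Borel--Cantelli lemma, which is precisely the argument you give. Nothing to add.
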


We conclude this subsection with two examples of Theorem \ref{thm:nooutliers} and Corollary \ref{cor:nooutliers}.  

\begin{example}
Let $\mu$ be the uniform distribution on the unit circle centered at the origin.  A simple computation shows that
\[ m_\mu(z) = \left\{
     \begin{array}{rl}
       0, & \text{if } |z| < 1, \\
       \frac{1}{z}, & \text{if } |z| > 1,
     \end{array}
   \right. \]
and hence $M_{\mu} = \{z \in \mathbb{C} : |z| < 1 \}$.  Since $\conv(\supp(\mu)) = \{z \in \mathbb{C} : |z| \leq 1\}$, Theorem \ref{thm:nooutliers} does not rule out the possibility of critical points in the disk $D_{1 - \eps} := \{ z \in \mathbb{C} : |z| < 1 - \eps\}$.  This is not a limitation of Theorem \ref{thm:nooutliers} and is consistent with the results in \cite{PR}, which imply that, with positive probability, $D_{1 - \eps}$ contains at least one critical point.  More precisely, let $p_n(z) := \prod_{j=1}^n (z - X_j)$, where $X_1, X_2, \ldots$ are iid random variables with distribution $\mu$.  Then for any $0 < \eps < 1$, there exists $\eta > 0$ (independent of $n$) such that $p_n$ has a critical point in the disk $D_{1-\eps}$ with probability at least $\eta$ for all sufficiently large $n$.  This follows from the determinantal structure described in \cite[Theorem 3]{PR}.  A numerical simulation of this example is shown in Figure \ref{fig:IIDCirc}.  
\end{example}
 
\begin{example}
Let $\mu$ be the uniform distribution on the union of disjoint circles $C_1 \cup C_2$, where $C_1$ is the unit circle centered at $5/2$ and $C_2$ is the unit circle centered at $-5/2$.  Then
\[ m_\mu(z) = \left\{
     \begin{array}{ll}
       \frac{4z}{ 4z^2 - 25} , & \text{if } |z - 5/2| > 1 \text{ and } |z + 5/2| > 1, \\
       \frac{1}{2z + 5}, & \text{if } |z - 5/2| < 1, \\
       \frac{1}{2z - 5}, & \text{if } |z + 5/2| < 1,
     \end{array}
   \right. \]
and $M_\mu = \{0\}$.  Let $\eps > 0$, and take $p_n(z) := \prod_{j=1}^n (z - X_j)$, where $X_1, X_2, \ldots$ are iid random variables with distribution $\mu$.  Then Corollary \ref{cor:nooutliers} guarantees that almost surely, for $n$ sufficiently large, all critical points of $p_n$ lie in the set 
\[ A_1 \cup A_2 \cup \{z \in \mathbb{C} : |z| < \eps \}, \]
where $A_1$ and $A_2$ are the annuli
\[ A_1 := \{ z \in \mathbb{C} : 1 - \eps < |z - 5/2| < 1+ \eps \}, \quad A_2 := \{ z \in \mathbb{C} : 1 - \eps < |z + 5/2| < 1+ \eps \}. \]
A numerical simulation of this example is shown in Figure \ref{fig:TwoCirc}.  In particular, the simulation depicts a single critical point near the origin, showing that critical points may lie in a neighborhood of the zero set $M_\mu$.  In fact, it follows from the law of large numbers and Walsh's two circle theorem (see, for example, \cite[Theorem 4.1.1]{RS}) that, for any $0 < \eps < 1/4$, almost surely, for $n$ sufficiently large, there is exactly one critical point of $p_n$ in the disk $\{z \in \mathbb{C} : |z| < 1 + \eps\}$.  Combined with Corollary \ref{cor:nooutliers}, we conclude that almost surely this critical point must converge to the origin as $n$ tends to infinity.  
\end{example}

\begin{figure}
\includegraphics[width =.95\columnwidth]{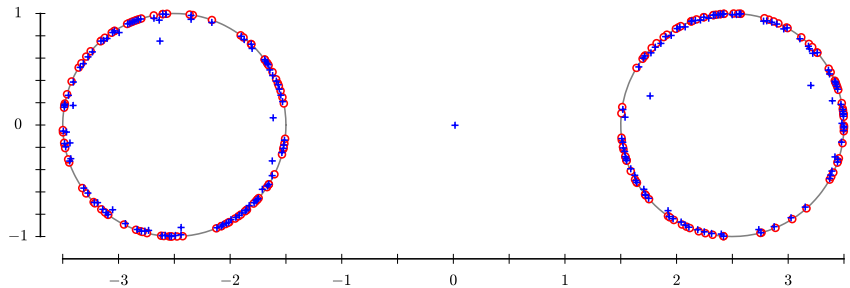}
\caption{The roots (red circles) and critical points (blue crosses) of a random, degree $200$ polynomial, where all $200$ roots are chosen independently and uniformly on the union of the two unit circles (black curves) centered at $-5/2$ and $5/2$, respectively.}
\label{fig:TwoCirc}
\end{figure}

\subsection{Locations of the outlying critical points in the perturbed model}
We now consider the outlying critical points depicted in Figures \ref{fig:IIDCircDetOut} and \ref{fig:IIDCirc3DetOut}.  To do  so, we will need the following notation.  For a polynomial $p$ of degree $n$, we let $w_1(p), \ldots, w_{n-1}(p)$ be the critical points of $p$ counted with multiplicity.  

\begin{theorem}[Locations of the outlying critical points] \label{thm:simple}
Let $\mu$ be a probability measure on $\mathbb{C}$ with compact support, and suppose $X_1, X_2, \ldots$ are iid random variables with distribution $\mu$.  Let $k \geq 1$, and assume $\xi_1, \ldots, \xi_k$ are deterministic complex numbers (which do not depend on $n$); in addition, suppose there are $s$ values $\xi_1, \ldots, \xi_s$ not in $\supp(\mu) \cup M_{\mu}$.  Then, there exists $\eps_0 > 0$ such that the following holds for any fixed $0 < \eps < \eps_0$.  Almost surely, for $n$ sufficiently large, there are exactly $s$ critical points (counted with multiplicity) of the polynomial 
$$ p_n(z) = \prod_{j=1}^{n-k} (z -X_j) \prod_{l=1}^k (z - \xi_l) $$
outside $N_{\mu}(\eps)$, and after labeling these critical points correctly, 
$$ w_{l}(p_n) = \xi_l + o(1) $$
for each $1 \leq l \leq s$.  
\end{theorem}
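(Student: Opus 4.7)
The plan is to study the logarithmic derivative
\[
\frac{p_n'(z)}{p_n(z)} = h_n(z) + g(z), \qquad h_n(z) := \sum_{j=1}^{n-k} \frac{1}{z - X_j}, \quad g(z) := \sum_{l=1}^{k} \frac{1}{z - \xi_l},
\]
on open sets where $p_n$ is nonvanishing, and to isolate the $s$ outlying critical points via Rouch\'e's theorem. The analytic backbone is that, almost surely, $\tfrac{1}{n-k} h_n(z) \to m_\mu(z)$ uniformly on every compact subset of $\mathbb{C} \setminus \supp(\mu)$; this follows from the pointwise strong law of large numbers on a countable dense subset together with Vitali's convergence theorem applied to the uniformly bounded holomorphic family $\{(n-k)^{-1} h_n\}_n$. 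Choose $\eps_0 > 0$ smaller than both $\min_{l \le s} \dist(\xi_l, \supp(\mu) \cup M_\mu)$ and half the minimum separation between distinct values among $\xi_1, \ldots, \xi_s$, and fix $\eps \in (0, \eps_0)$.

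\textbf{Exterior step.} By Gauss--Lucas every critical point of $p_n$ lies in $\conv(\supp(\mu) \cup \{\xi_1, \ldots, \xi_k\})$. Consider
\[
K_\eps := \conv\bigl(\supp(\mu) \cup \{\xi_1, \ldots, \xi_k\}\bigr) \setminus \Bigl( N_\mu(\eps) \cup \bigcup_{l=1}^s \bar D(\xi_l, \eps) \Bigr),
\]
a compact set disjoint from $\supp(\mu) \cup M_\mu$. On $K_\eps$ the transform $m_\mu$ is continuous and nonvanishing, hence $|m_\mu| \ge c_0 > 0$ there; uniform convergence yields $|h_n| \ge (n-k) c_0 / 2$ on $K_\eps$ eventually, while $|g|$ is uniformly bounded on $K_\eps$ (its poles $\xi_1, \ldots, \xi_s$ are excised by the disks, and each $\xi_j$ with $j > s$ lies in $\supp(\mu) \cup M_\mu \subset N_\mu(\eps)$). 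Thus $h_n + g$ never vanishes on $K_\eps$; combined with the fact that $p_n$ is itself zero-free on $K_\eps$ (the random roots lie in $\supp(\mu)$ almost surely, and each $\xi_j$ is excluded), this shows $p_n'$ has no zeros in $K_\eps$.

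\textbf{Interior step.} Fix $1 \le l \le s$ and factor $p_n(z) = (z - \xi_l) q_n(z)$, so that
\[
p_n'(z) = q_n(z) \bigl[ 1 + (z - \xi_l)\, q_n'(z)/q_n(z) \bigr].
\]
The choice of $\eps_0$ ensures $q_n$ has no zeros on $\bar D(\xi_l, \eps)$ almost surely, and the uniform convergence yields $q_n'(z)/q_n(z) = (n-1) m_\mu(z)(1 + o(1))$ uniformly on this closed disk, with $|m_\mu(z)| \ge |m_\mu(\xi_l)|/2 > 0$ there after possibly shrinking $\eps_0$. Hence on $\partial D(\xi_l, \eps)$,
\[
\bigl| (z - \xi_l)\, q_n'(z)/q_n(z) \bigr| \ge \tfrac12 \eps (n-1) |m_\mu(\xi_l)| > 1
\]
once $n$ is large. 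Rouch\'e's theorem applied to $F := (z - \xi_l) q_n'/q_n$ (dominant) and $G := 1$ (subdominant) on $D(\xi_l, \eps)$ then shows $1 + (z - \xi_l) q_n'/q_n$ has the same zero count in $D(\xi_l, \eps)$ as $F$; since $q_n'/q_n$ is nonvanishing on $\bar D(\xi_l, \eps)$, $F$ has the unique zero $z = \xi_l$ there, so $p_n'$ has exactly one zero in $D(\xi_l, \eps)$. A value $\xi_l$ repeated with multiplicity $\alpha$ in $\{\xi_1, \ldots, \xi_s\}$ is handled by factoring $(z - \xi_l)^\alpha$ and running the same Rouch\'e comparison with the constant $\alpha$ in place of $1$, which yields $\alpha$ critical points in the corresponding disk and preserves the total count $s$.

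\textbf{Conclusion and main obstacle.} Combining the two steps, almost surely for large $n$ the polynomial $p_n$ has exactly $s$ critical points outside $N_\mu(\eps)$, one in each disk $D(\xi_l, \eps)$; labeling these as $w_l(p_n)$ gives $|w_l(p_n) - \xi_l| < \eps$, and a countable intersection over a sequence $\eps_j \downarrow 0$ produces a single full-probability event on which $w_l(p_n) = \xi_l + o(1)$ for every $1 \le l \le s$. The main technical obstacle I anticipate is the almost sure, uniform convergence $\tfrac{1}{n-k} h_n \to m_\mu$ on compacts of $\mathbb{C} \setminus \supp(\mu)$ for an arbitrary compactly supported $\mu$ (with no regularity such as a bounded density), together with the continuity and nonvanishing of $m_\mu$ on $K_\eps$; once these are in hand, the remainder of the argument is essentially the Rouch\'e splitting above.
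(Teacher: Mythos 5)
Your proof is correct and takes a genuinely different route from the paper's. The paper derives Theorem~\ref{thm:simple} as a corollary of the more general Theorem~\ref{thm:main}, whose proof (via Theorem~\ref{thm:reduction}) passes through the companion-matrix identity of Theorem~\ref{thm:companion}: after splitting the diagonal matrix $D$ into ``inside'' and ``outside'' blocks, a block-determinant computation and the Sherman--Morrison formula reduce the problem to the zeros of an $s\times s$ determinant, and Rouch\'e is applied to that small determinant. Your argument bypasses all of the linear algebra and works directly with the logarithmic derivative $p_n'/p_n = h_n + g$, using a.s.\ locally uniform convergence $\tfrac{1}{n-k}h_n \to m_\mu$ (which you obtain via the strong law plus Vitali/Montel, whereas the paper's concentration Lemma~\ref{lemma:lln} uses Hoeffding plus an $\eps$-net and gives an exponential rate) and then applying Rouch\'e to the one-dimensional function $\alpha + (z-\xi_l)q_n'/q_n$ on each small disk. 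Your exterior step is in the same spirit as the paper's proof of Theorem~\ref{thm:nooutliers}, extended to the perturbed polynomial; your interior step, in particular the factorization $p_n = (z-\xi_l)^\alpha q_n$ and the Rouch\'e comparison $F=(z-\xi_l)q_n'/q_n$ versus $G=\alpha$, is an elegant and more elementary replacement for the companion-matrix reduction. The trade-off is that your argument leans on the $\xi_l$ being fixed (to secure the uniform separations and a single deterministic exceptional set), so it proves Theorem~\ref{thm:simple} cleanly but does not obviously extend to the $n$-dependent triangular arrays of Theorem~\ref{thm:main}, nor to the unbounded $\xi_1^{(n)}$ of Theorem~\ref{thm:distinguished}; the paper's matrix framework is designed precisely to handle those harder cases.

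One small quantitative slip: to guarantee that $\bar D(\xi_l,\eps)$ is disjoint from $N_\mu(\eps)$ for every $\eps<\eps_0$ --- which you need so that the $s$ critical points you find really are the critical points lying outside $N_\mu(\eps)$ --- you should take $\eps_0$ less than \emph{half} of $\min_{l\le s}\dist(\xi_l, \supp(\mu)\cup M_\mu)$, not merely less than that distance (otherwise for $\eps$ near $\eps_0$ the disk and the neighborhood can overlap). Since the theorem only asserts existence of some $\eps_0>0$, this is immediate to fix and does not affect the soundness of the argument.
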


Theorem \ref{thm:simple} describes exactly the phenomenon we observe in Figures \ref{fig:IIDCircDetOut} and \ref{fig:IIDCirc3DetOut}.  In particular, this theorem shows that each deterministic root outside $\supp(\mu) \cup M_{\mu}$ creates one outlying critical point, which is asymptotically close to the deterministic root.  

For comparison, we provide the following example which shows that the conclusion of Theorem \ref{thm:simple} fails for deterministic polynomials.

\begin{example}
Let $p_n(z) := z^{n-1} - 1$ and $q_n(z) := p_n(z) (z - 1/2)$.  Then the roots of $q_n$ are $(n-1)$-th roots of unity with an outlier at $z = 1/2$.  However, we will show that $q_n$ has no critical points near $z = 1/2$.  Indeed, 
\[ q_n'(z) = n z^{n-1} - \frac{n-1}{2} z^{n-2} - 1, \]
and so the critical points are the solutions of
\[ 
\frac{1}{n} q_n'(z) = z^{n-1} - \frac{1}{2} \frac{n-1}{n} z^{n-2} - \frac{1}{n} = 0. \]
For $|z| \leq 3/4$, we have
\[
\left| z^{n-1} - \frac{1}{2} \frac{n-1}{n} z^{n-2} \right| \leq |z|^{n-1} + |z|^{n-2} \leq \frac{7}{4} \left( \frac{3}{4} \right)^{n-2} < \frac{1}{n}
\]
for $n$ sufficiently large.  This implies that $q_n'(z) \neq 0$ for every $z \in \mathbb{C}$ with $|z| \leq 3/4$.  Hence, for $n$ sufficiently large, there are no critical points of $q_n$ in the disk $\{z \in \mathbb{C} : |z| \leq 3/4\}$.  More generally, this argument shows that for a fixed $\eta \in (0,1)$, there are no critical points of $q_n$ in the disk $\{z \in \mathbb{C} : |z| \leq 1-\eta\}$ for sufficiently large $n$.
\end{example}

We next state two generalizations of Theorem \ref{thm:simple}.  Both results deal with the case when the deterministic points $\xi_1, \ldots, \xi_k$ (as well as the integer $k$) are allowed to depend on $n$.  Because the points can now depend on $n$, some additional technical assumptions are required.  These technical assumptions are trivially satisfied when $\xi_1, \ldots, \xi_k$ do not depend on $n$.  As such, Theorem \ref{thm:simple} is actually a corollary of the following more general result.  

\begin{theorem}[Locations of the outlying critical points: dependence on $n$] \label{thm:main}
Let $\mu$ be a probability measure on $\mathbb{C}$ with compact support, and suppose $X_1, X_2, \ldots$ are iid random variables with distribution $\mu$.  For each $n \geq 1$, let $\xi_1^{(n)}, \ldots, \xi_{k_n}^{(n)}$ be a triangular array of deterministic complex numbers with $k_n = O(1)$, and assume
\begin{equation} \label{eq:detmaxbnd}
	\max\{ |\xi_1^{(n)}|, \ldots, |\xi_{k_n}^{(n)}| \} = O(1).
\end{equation}
Fix $\eps > 0$, and suppose that for all sufficiently large $n$, there are no values of $\xi_1^{(n)}, \ldots, \xi_{k_n}^{(n)}$ in $\N_{\mu}(3 \eps) \setminus \N_{\mu}(\eps)$ and there are $s$ values $\xi_1^{(n)}, \ldots, \xi_{s}^{(n)}$ outside $\N_{\mu}(3 \eps)$.  Then, almost surely, for $n$ sufficiently large, there are exactly $s$ critical points (counted with multiplicity) of the polynomial 
$$ p_n(z) := \prod_{j=1}^{n-{k_n}} (z - X_j) \prod_{l=1}^{k_n} (z - \xi_l^{(n)}) $$ 
outside $\N_{\mu}(2 \eps)$, and after labeling these critical points correctly, 
$$ w_{l}(p_n) = \xi_l^{(n)} + o(1) $$
for each $1 \leq l \leq s$.  
\end{theorem}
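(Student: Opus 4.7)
The plan is to apply Rouché's theorem after factoring $p_n$ into its random and deterministic parts. Write $p_n = q_n h_n$ with $q_n(z) := \prod_{j=1}^{n-k_n}(z - X_j)$ and $h_n(z) := \prod_{l=1}^{k_n}(z - \xi_l^{(n)})$, so that the critical points of $p_n$ which are not roots of $q_n$ are exactly the zeros of
\[
\Phi_n(z) := \frac{p_n'(z)}{q_n(z)} = \frac{q_n'(z)}{q_n(z)}\, h_n(z) + h_n'(z).
\]
I would then compare $\Phi_n$ to its deterministic proxy
\[
\tilde{\Phi}_n(z) := (n - k_n) m_\mu(z)\, h_n(z) + h_n'(z),
\]
obtained by replacing the empirical Cauchy--Stieltjes transform $q_n'/q_n$ by its pointwise mean $(n-k_n) m_\mu$. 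The key analytic input is a uniform law of large numbers: almost surely, for every compact $K \subset \mathbb{C} \setminus \supp(\mu)$,
\[
\sup_{z \in K}\left|\frac{q_n'(z)}{q_n(z)} - (n-k_n) m_\mu(z)\right| = o(n).
\]
Pointwise this is the strong law of large numbers for the bounded i.i.d.\ random variables $1/(z - X_j)$ (with mean $m_\mu(z)$); the uniform Lipschitz bound $\dist(K, \supp(\mu))^{-2}$ upgrades this to uniform convergence on $K$ via an $\eps$-net argument, which is essentially the engine driving Theorem \ref{thm:nooutliers}. Consequently $|\Phi_n - \tilde{\Phi}_n| = o(n)\,|h_n|$ almost surely, uniformly on compact subsets of $B \setminus \supp(\mu)$, where $B$ is any fixed open ball containing $\supp(\mu)$ and all the $\xi_l^{(n)}$'s in its interior (available by \eqref{eq:detmaxbnd}); by Gauss--Lucas all critical points of $p_n$ lie in $B$.

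Given this, I count outlying critical points by two Rouché comparisons on $\Omega := B \setminus \overline{\N_\mu(2\eps)}$. The hypothesis that no $\xi_l^{(n)}$ lies in the buffer $\N_\mu(3\eps) \setminus \N_\mu(\eps)$, combined with the triangle inequality, yields $|z - \xi_l^{(n)}| \geq \eps$ for every $l$ and $z \in \partial \Omega$, so $|h_n|$ is bounded below on $\partial \Omega$ uniformly in $n$; and $|m_\mu|$ is bounded below on the compact set $\overline{\Omega}$ since $\M_\mu \subset \N_\mu \subset \N_\mu(2\eps)$. Thus on $\partial \Omega$, $|(n-k_n) m_\mu\, h_n| \gtrsim n \gg |h_n'| = O(1)$, and a first Rouché application shows $\tilde{\Phi}_n$ has the same number of zeros in $\Omega$ as $(n-k_n) m_\mu h_n$, namely $s$ (contributed by $\xi_1^{(n)}, \ldots, \xi_s^{(n)}$ counted with multiplicity, as the zero set $\M_\mu$ of $m_\mu$ lies outside $\Omega$). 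A second Rouché using $|\Phi_n - \tilde{\Phi}_n| = o(n)|h_n| \ll |\tilde{\Phi}_n|$ on $\partial \Omega$ transfers the count to $\Phi_n$; since the roots of $q_n$ lie in $\supp(\mu) \subset \N_\mu(2\eps)$ and so are not in $\Omega$, these $s$ zeros of $\Phi_n$ are precisely the critical points of $p_n$ in $\Omega$, which by Gauss--Lucas exhausts the critical points of $p_n$ outside $\overline{\N_\mu(2\eps)}$.

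For the approximation $w_l(p_n) = \xi_l^{(n)} + o(1)$, fix any $\eta > 0$ and repeat the domination estimate on $\overline{\Omega} \setminus \bigcup_{l=1}^s D(\xi_l^{(n)}, \eta)$: here $|h_n| \geq \eta^s \eps^{k_n - s}$ remains a positive constant uniformly in $n$ (since $k_n = O(1)$), so $|\Phi_n - \tilde{\Phi}_n| < |\tilde{\Phi}_n|$ still holds, forcing $\Phi_n$ to be nonvanishing on this set. Consequently all $s$ outlying critical points must lie in $\bigcup_l D(\xi_l^{(n)}, \eta)$; assigning critical points to $\xi$'s within each connected component of this union yields a labeling with $|w_l(p_n) - \xi_l^{(n)}| = O(\eta)$, and taking $\eta \to 0$ along a countable sequence gives the stated conclusion. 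The main obstacle is making all constants uniform in $n$: the $\xi_l^{(n)}$'s may cluster, coincide with multiplicity, or drift along subsequences, so one must check that the lower bounds on $|h_n|$ and $|m_\mu|$ and the upper bound on $|h_n'|$ used in Rouché all survive uniformly in $n$. The hypotheses $k_n = O(1)$, \eqref{eq:detmaxbnd}, and the absence of $\xi_l^{(n)}$'s from the buffer $\N_\mu(3\eps) \setminus \N_\mu(\eps)$, combined with the fact that $|h_n|$ appears as a common factor on both sides of each Rouché inequality (and so is absorbed by the comparison), are what make this feasible.
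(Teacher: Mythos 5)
Your proof is correct, and it takes a genuinely different route from the paper's. The paper proves Theorem~\ref{thm:main} by first translating so that $0 \in \supp(\mu)$ (Proposition~\ref{prop:translate}), then invoking the companion-matrix identity $\frac{1}{n}zp_n'(z) = \det\left(zI - D + \frac{1}{n}DJ\right)$ (Theorem~\ref{thm:companion}), decomposing $D$ into ``inside'' and ``outside'' blocks, and using the block-determinant formula together with the Sherman--Morrison identity (Lemmas~\ref{lemma:block},~\ref{lemma:SM}) to reduce the zero count to the determinant of an $s\times s$ Schur complement, which is then compared to $\det(zI - D_\Out)$ via Lemma~\ref{lemma:detdiff} and Rouch\'e. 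You instead work directly with the logarithmic derivative, factoring $p_n = q_nh_n$ and studying $\Phi_n = (q_n'/q_n)h_n + h_n'$, replacing the empirical Cauchy transform $q_n'/q_n$ by $(n-k_n)m_\mu$ via the uniform law of large numbers (essentially Lemma~\ref{lemma:lln} plus Borel--Cantelli, applied on the compact set $\overline{\Omega}$), and then running two Rouch\'e comparisons. This avoids the companion-matrix machinery entirely, and in particular sidesteps the translation step: the paper needs $0 \in \supp(\mu)$ because the factor of $z$ in Theorem~\ref{thm:companion} forces them to lower-bound $|zm_\mu(z)|$, whereas your $\Phi_n$ carries no such factor and you only need $|m_\mu|$ bounded below on $\overline{\Omega}$. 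What the paper's more structured approach buys, which yours does not immediately recover, is the explicit $O(1/n)$ rate in the $s=1$ case (Theorem~\ref{thm:distinguished}, proved via Theorem~\ref{thm:reductiondist}), where the $1\times1$ reduced determinant admits a direct Rouch\'e estimate with a precise radius; your two-stage argument with the fixed $\eta$-disks and diagonal extraction only yields $o(1)$, which is all that Theorem~\ref{thm:main} asserts. One small technical point you gloss over: Rouch\'e on $\Omega = B\setminus\overline{\N_\mu(2\eps)}$ and on $\Omega_\eta$ requires a brief justification that the argument principle applies to these domains (whose boundaries need not be single smooth Jordan curves); this is standard since all functions involved are analytic on a neighborhood of $\overline{\Omega}$, but it is worth a sentence. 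Also, the per-component counting used in the labeling step (to handle $\xi_l^{(n)}$'s that cluster or coincide) deserves one more Rouch\'e application on each connected component of $\bigcup_l D(\xi_l^{(n)},\eta)$, exactly as you indicate; spelled out, the argument is sound.
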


The $O(1)$-magnitude assumption in \eqref{eq:detmaxbnd} is required for our proof.  However, we conjecture that this condition is not needed.  In fact, in the case when $s=1$, we can remove this assumption, and we obtain the following stronger result.  

\begin{theorem}[Locations of the outlying critical points: $s = 1$ case] \label{thm:distinguished}
Let $\mu$ be a probability measure on $\mathbb{C}$ with compact support, and suppose $X_1, X_2, \ldots$ are iid random variables with distribution $\mu$.  For each $n \geq 1$, let $\xi_1^{(n)}, \ldots, \xi_{k_n}^{(n)}$ be a triangular array of deterministic complex numbers with $k_n = O(1)$.  Fix $\eps > 0$, and suppose that for all sufficiently large $n$, there are no values of $\xi_1^{(n)}, \ldots, \xi_{k_n}^{(n)}$ in $\N_{\mu}(3 \eps) \setminus \N_{\mu}(\eps)$ and there is one value $\xi_1^{(n)}$ outside $\N_{\mu}(3 \eps)$.  Then, almost surely, for $n$ sufficiently large, there is exactly one critical point of the polynomial 
$$ p_n(z) := \prod_{j=1}^{n-{k_n}} (z - X_j) \prod_{l=1}^{k_n} (z - \xi_l^{(n)}) $$ 
outside $\N_{\mu}(2 \eps)$, and after labeling the critical points correctly, 
\begin{equation} \label{eq:location}
	w_{1}(p_n) = \xi_1^{(n)} \left( 1 + O \left( 1/n \right) \right) + O( 1/ n). 
\end{equation}
\end{theorem}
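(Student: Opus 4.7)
The plan is to translate the critical-point condition $p_n'(z)/p_n(z)=0$ into a fixed-point equation for the outlier and then apply Rouché's theorem on a disk around $\xi_1^{(n)}$ whose radius captures the rate in \eqref{eq:location}. Using the logarithmic derivative, any critical point $z$ of $p_n$ distinct from the deterministic and random roots satisfies
\[
(z-\xi_1^{(n)})\, F_n(z) = -1, \qquad F_n(z) := \sum_{j=1}^{n-k_n} \frac{1}{z-X_j} + \sum_{l=2}^{k_n} \frac{1}{z-\xi_l^{(n)}}.
\]
Outliers elsewhere are ruled out separately: by Theorem~\ref{thm:nooutliers}, almost surely for $n$ large the random part $\prod_{j=1}^{n-k_n}(z-X_j)$ has no critical points outside $\N_\mu(\eps)$, and since the remaining $k_n-1$ deterministic roots all lie inside $\N_\mu(\eps)$ and $k_n=O(1)$, a Rouché comparison identical to that in the proof of Theorem~\ref{thm:main} shows that they produce no new critical points outside $\N_\mu(2\eps)$.

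The key analytic input is the uniform estimate
\[
F_n(z) = (n-k_n)\, m_\mu(z) + O(1)
\]
on a disk $\{\,|z-\xi_1^{(n)}|\le r_n\,\}$ with $r_n \asymp (|\xi_1^{(n)}|+1)/n$, holding almost surely for $n$ large. When $|\xi_1^{(n)}|$ stays bounded, this disk sits in a fixed compact set disjoint from $\supp(\mu)\cup M_\mu$, and standard exponential concentration of the empirical Cauchy--Stieltjes transform $\tfrac{1}{n-k_n}\sum_j(z-X_j)^{-1}$ around $m_\mu(z)$ (uniform on such compacta, as used in the proof of Theorem~\ref{thm:nooutliers}) together with boundedness of $\sum_{l=2}^{k_n}(z-\xi_l^{(n)})^{-1}$ yields the estimate. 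When $|\xi_1^{(n)}|\to\infty$ along a subsequence, the disk escapes every compact set and I would instead expand
\[
\frac{1}{z-X_j} = \frac{1}{z} + \frac{X_j}{z^2} + O\!\left(\frac{|X_j|^2}{|z|^3}\right),
\]
summing to reduce uniform control of $F_n$ on the disk to the Cauchy--Stieltjes expansion $m_\mu(z) = 1/z + (\int x\,d\mu)/z^2 + O(1/|z|^3)$ together with the strong law of large numbers applied to the empirical mean $\tfrac{1}{n-k_n}\sum_j X_j$.

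With this estimate in hand, on the boundary circle $|z-\xi_1^{(n)}|=r_n$ one has $|F_n(z)| \asymp n/(|\xi_1^{(n)}|+1)$, using also that $\xi_1^{(n)}\notin\N_\mu(3\eps)$ forces $|m_\mu(\xi_1^{(n)})|$ to be bounded below by a positive multiple of $1/(|\xi_1^{(n)}|+1)$ (in both regimes). Hence $|(z-\xi_1^{(n)})F_n(z)| \gg 1$ on this circle, and Rouché's theorem applied to $z\mapsto (z-\xi_1^{(n)})F_n(z)+1$ produces exactly one zero in the disk. Iterating the fixed-point relation $z = \xi_1^{(n)} - 1/F_n(z)$ once, with the sharper expansion of $F_n$ near $\xi_1^{(n)}$, yields $w_1(p_n) - \xi_1^{(n)} = O((|\xi_1^{(n)}|+1)/n)$, which is precisely the bound \eqref{eq:location}.

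The main obstacle is the uniform concentration step in the unbounded regime $|\xi_1^{(n)}|\to\infty$: the standard compact-set concentration arguments underlying Theorems~\ref{thm:nooutliers} and~\ref{thm:main} no longer apply once the disk of interest leaves every compact set, which is exactly why Theorem~\ref{thm:main} assumes the uniform bound~\eqref{eq:detmaxbnd}. The expansion at infinity outlined above is the essential new ingredient that lets one remove~\eqref{eq:detmaxbnd} in the $s=1$ case, at the price of restricting to a single outlying deterministic root.
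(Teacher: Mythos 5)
Your proposal takes a genuinely different route from the paper, but as written it has two gaps worth flagging.

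First, a substantive difference in method. The paper does not work with the logarithmic derivative directly. Instead, it translates so that $0 \in \supp(\mu)$, invokes the companion-matrix identity (Theorem \ref{thm:companion}) to write $\tfrac{1}{n}zp_n'(z)$ as a block determinant, reduces via the Schur complement (Lemma \ref{lemma:block}) and the Sherman--Morrison formula to a $1\times 1$ equation of the form
\[
z - \xi_1^{(n)} + \frac{\xi_1^{(n)}}{n}\left(1 - \frac{1}{n}\Bj^\mathrm{T}G(z)D_\In\Bj\right) = 0,
\]
whose solutions outside $\N_\mu(2\eps)$ are precisely the critical points outside $\N_\mu(2\eps)$. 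The decisive input is Lemma \ref{lemma:technical}, which establishes a \emph{single uniform} bound $\sup_{z\notin\N_\mu(2\eps)}\left|\tfrac{1}{n}\Bj^\mathrm{T}G(z)D_\In\Bj\right|=O(1)$, covering both the compact region and the regime $|z|\to\infty$ in one stroke (the latter via a crude estimate when $|z|\ge M$, the former via concentration on the bounded annular region). Because the perturbation term in the $1\times1$ equation is manifestly proportional to $\xi_1^{(n)}/n$, the Rouch\'e argument handles $|\xi_1^{(n)}|\to\infty$ automatically, and the bound \eqref{eq:rouchebnd} shows directly that \emph{all} solutions outside $\N_\mu(2\eps)$ satisfy $|w-\xi_1^{(n)}|\le \tfrac{C}{n}|\xi_1^{(n)}|$ --- so the "no other outliers" conclusion is built in and needs no separate case analysis. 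Your logarithmic-derivative equation $(z-\xi_1^{(n)})F_n(z)=-1$ does not come pre-organized with the factor $\xi_1^{(n)}/n$, which is why you find yourself splitting into a bounded regime and an unbounded regime and reaching for an expansion at infinity; the paper's algebra circumvents that split.

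Second, two specific gaps in your argument. (a) The claim $F_n(z) = (n-k_n)m_\mu(z) + O(1)$ on the disk is too strong: the fluctuations of $\sum_{j}(z-X_j)^{-1}$ about $(n-k_n)m_\mu(z)$ are of order $\sqrt{n}$ up to logs, not $O(1)$. This isn't fatal --- the Rouch\'e step only needs $F_n(z) = (n-k_n)m_\mu(z)(1+o(1))$, which is what Lemma \ref{lemma:lln} delivers --- but the stated estimate is false and should be corrected. (b) The ruling-out step ("a Rouch\'e comparison identical to that in the proof of Theorem \ref{thm:main}") is not well-founded: the proof of Theorem \ref{thm:main} relies on the boundedness hypothesis \eqref{eq:detmaxbnd}, precisely the hypothesis you are trying to drop. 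You need a direct argument that $(z-\xi_1^{(n)})F_n(z)+1\ne 0$ for $z\notin\N_\mu(2\eps)$ with $|z-\xi_1^{(n)}|$ larger than $\asymp(|\xi_1^{(n)}|+1)/n$; this can be done (note that the fixed-point relation $z=\xi_1^{(n)}-1/F_n(z)$ together with $|1/F_n(z)|\lesssim |z|/n$ self-improves to $|z|\le 2|\xi_1^{(n)}|$ and then $|z-\xi_1^{(n)}|\lesssim|\xi_1^{(n)}|/n$), but it must be written out rather than deferred to a proof that makes the opposite assumption.
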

\begin{remark}
If $\xi_1^{(n)} = O(1)$, then \eqref{eq:location} implies that, almost surely, 
\[ w_1(p_n) = \xi_1^{(n)} + O(1/n). \]
More generally, if $\xi_1^{(n)} = o(n)$, Theorem \ref{thm:distinguished} yields that, almost surely, 
$$ w_1(p_n) = \xi_{1}^{(n)} + o(1). $$
In other words, the location of the outlying critical point $w_1(p_n)$ is asymptotically close to the outlying root $\xi_1^{(n)}$.  
\end{remark}

\begin{remark}
\label{rem:Hanin}
In the case where $\xi_1^{(n)}$ lies at least a fixed distance away from the convex hull of the support of $\mu$, the conclusion in \eqref{eq:location} is a deterministic result (regardless of the asymptotic behavior of $k_n$). This can be deduced from Walsh's two-circle theorem (see \cite[Theorem 4.1.1]{RS}).  
\end{remark}

We present a numerical simulation of Theorem \ref{thm:distinguished} in Figure \ref{fig:distinguished:IIDBlob}.

\begin{figure}
\includegraphics[width =.8\columnwidth]{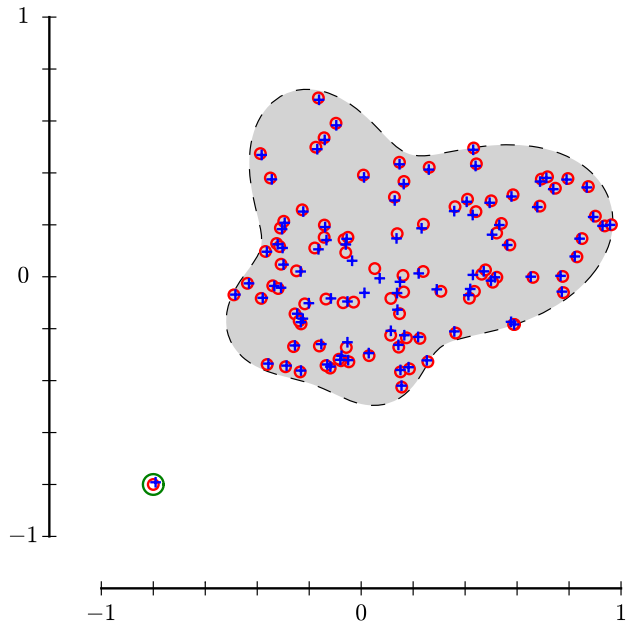}
\caption{The roots (red circles) and critical points (blue crosses) of a random, degree $n = 100$ polynomial, where $99$ roots are chosen independently and uniformly on the outlined region, and one root takes the deterministic value $\xi = -0.8 - 0.8i$. The small green circle centered at $\xi$ that contains the critical point nearby has radius $4/n$.}
\label{fig:distinguished:IIDBlob}
\end{figure}

\subsection{Outline}

The rest of the paper is devoted to the proof of our main results.  In Section \ref{sec:tools}, we develop several tools we will need for the proofs.  The proof of Theorem \ref{thm:nooutliers} is presented in Section \ref{sec:proof:nooutliers}.  We prove Theorems \ref{thm:simple}, \ref{thm:main}, and \ref{thm:distinguished} in Section \ref{sec:proofs}.  Finally, the proof of Theorem \ref{thm:genkabluchko} is given in Appendix \ref{sec:genkabluchko}.

\section{Tools and notation} \label{sec:tools}

We present here some tools we will need to prove our main results.

\subsection{Tools from probability theory}

We will need the following complex-valued version of Hoeffding's inequality.  

\begin{lemma}[Hoeffding's inequality for complex-valued random variables] \label{lemma:Hoeffding}
Let $Y_1, \ldots, Y_n$ be iid complex-valued random variables which satisfy $|Y_j| \leq K$ almost surely for some $K > 0$.  Then there exist absolute constants $C, c>0$ such that
$$ \Prob \left( \left| \frac{1}{n} \sum_{j=1}^n Y_j - \frac{1}{n} \E \left[ \sum_{j=1}^n Y_j \right] \right| \geq t \right) \leq C \exp \left( - c n t^2 / K^2 \right) $$
for every $t > 0$.  
\end{lemma}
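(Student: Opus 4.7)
The plan is to reduce to the standard real-valued Hoeffding inequality by splitting each $Y_j$ into its real and imaginary parts. Write $Y_j = U_j + i V_j$ where $U_j := \Re(Y_j)$ and $V_j := \Im(Y_j)$. Since $|U_j|, |V_j| \leq |Y_j| \leq K$ almost surely, the variables $U_1,\ldots,U_n$ are iid real-valued bounded in absolute value by $K$, and similarly for $V_1,\ldots,V_n$.

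The real-valued Hoeffding inequality, applied to $U_j$ (which takes values in $[-K,K]$), gives constants $C', c' > 0$ such that
\[
\Prob\left( \left| \frac{1}{n} \sum_{j=1}^n U_j - \frac{1}{n} \E\left[ \sum_{j=1}^n U_j \right] \right| \geq t/\sqrt{2} \right) \leq C' \exp(-c' n t^2 / K^2),
\]
and an identical bound holds with $V_j$ in place of $U_j$. I would then use the elementary inequality $|a + ib| \leq \sqrt{2}\max(|a|,|b|)$ (or equivalently, if $|a + ib| \geq t$ then $|a| \geq t/\sqrt{2}$ or $|b| \geq t/\sqrt{2}$) together with linearity of expectation to bound the complex deviation by a union bound over the two real deviations. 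This immediately produces the desired conclusion with $C = 2C'$ and $c = c'$.

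There is no real obstacle here: this is a routine complexification argument, and the only care needed is bookkeeping of the constants (absorbing the factor $\sqrt{2}$ into $c$ and the factor of $2$ from the union bound into $C$). The only mild subtlety is noting that $|U_j|, |V_j| \leq K$ rather than, say, $2K$, which keeps the quadratic dependence on $K$ in the exponent exactly as stated.
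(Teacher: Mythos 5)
Your proposal is correct and follows essentially the same route as the paper: split into real and imaginary parts, note that $|S_n|\geq t$ forces $|\Re(S_n)|\geq t/\sqrt{2}$ or $|\Im(S_n)|\geq t/\sqrt{2}$, take a union bound, and apply the real-valued Hoeffding inequality to each coordinate using $|\Re(Y_j)|,|\Im(Y_j)|\leq K$. No substantive differences.
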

\begin{proof}
Let 
$$ S_n := \frac{1}{n} \sum_{j=1}^n Y_j - \frac{1}{n} \E \left[ \sum_{j=1}^n Y_j \right]. $$  
If $|S_n| \geq t$, then $|\Re(S_n)| \geq t / \sqrt{2}$ or $|\Im(S_n)| \geq t / \sqrt{2}$.  So, we have
$$ \Prob( |S_n| \geq t) \leq \Prob ( |\Re(S_n)| \geq t / \sqrt{2} ) + \Prob(|\Im(S_n)| \geq t / \sqrt{2} ). $$
The claim now follows from the classic (real-valued) version of Hoeffding's inequality (see \cite{H}) since $|\Re(Y_j)| \leq K$ and $|\Im(Y_j)| \leq K$.  
\end{proof}

\subsection{Nets}

We introduce $\varepsilon$-nets as a convenient way to discretize a compact set.  

\begin{definition}
Let $X$ be a subset of $\mathbb{C}$, and $\eps > 0$.  A subset $\mathcal{N}$ of $X$ is called an $\eps$-net of $X$ if every point $x \in X$ can be approximated within $\eps$ by some point $y \in \mathcal{N}$, i.e. so that $|x - y| \leq \eps$.  
\end{definition}
For a finite set $\mathcal{N}$, we let $|\mathcal{N}|$ denote the cardinality of $\mathcal{N}$.  We will need the following estimate for the size of an $\eps$-net.  
\begin{lemma} \label{lemma:net}
Let $D$ be a compact subset of $\{z \in \mathbb{C} : |z| \leq M \}$ for some $M >0$.  Then, for every $\eps > 0$, there is an $\eps$-net $\mathcal{N}$ of $D$ such that 
$$ |\mathcal{N} | \leq \left( 1 + \frac{4 M}{\eps} \right)^2. $$
\end{lemma}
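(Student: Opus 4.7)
The plan is to use the classical volume-packing argument that produces an $\eps$-net from a maximal $\eps$-separated set. Specifically, I would define $\mathcal{N} \subseteq D$ to be a maximal subset with the property that any two distinct points $x, y \in \mathcal{N}$ satisfy $|x - y| \geq \eps$. Such a maximal set exists by a Zorn/greedy argument in the compact set $D$, and in fact $\mathcal{N}$ must be finite by the volume bound we will derive.

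The first step is to verify that any such maximal $\eps$-separated set is automatically an $\eps$-net of $D$. Indeed, suppose some point $x \in D$ satisfied $|x - y| > \eps$ for every $y \in \mathcal{N}$. Then $\mathcal{N} \cup \{x\}$ would also be $\eps$-separated and strictly larger than $\mathcal{N}$, contradicting maximality. Hence every $x \in D$ lies within distance $\eps$ of some point of $\mathcal{N}$.

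The second step is to bound $|\mathcal{N}|$ by comparing areas. The open Euclidean disks of radius $\eps/2$ centered at the points of $\mathcal{N}$ are pairwise disjoint, because any two centers are at distance at least $\eps$ apart. Since each center lies in $D \subseteq \{z : |z| \leq M\}$, each of these disks is contained in the larger disk $\{z : |z| \leq M + \eps/2\}$. Adding up the areas of the disjoint small disks and bounding by the area of the enclosing disk gives
\[
|\mathcal{N}| \cdot \pi \left(\frac{\eps}{2}\right)^2 \leq \pi \left(M + \frac{\eps}{2}\right)^2,
\]
which rearranges to $|\mathcal{N}| \leq \left(1 + \tfrac{2M}{\eps}\right)^2 \leq \left(1 + \tfrac{4M}{\eps}\right)^2$, as required.

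There is no real obstacle here; the argument is a two-dimensional instance of a standard packing estimate. The only mild point to be careful about is the inclusion of the small disks inside the enlarged disk of radius $M + \eps/2$ (handled by the triangle inequality), and the verification that maximality forces $\mathcal{N}$ to be finite (which follows a posteriori from the area bound, or alternatively from compactness of $D$ by extracting a limit point of any infinite $\eps$-separated sequence, which would violate separation).
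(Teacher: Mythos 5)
Your proof is correct, but it takes a cleaner route than the paper. The paper first constructs a maximal $\eps/2$-separated subset $\mathcal{N}'$ of the full ambient disk $S := \{|z| \leq M\}$ (not of $D$), uses the packing argument to bound $|\mathcal{N}'| \leq (1 + 4M/\eps)^2$, and then runs a second iterative step to transform $\mathcal{N}'$ into an $\eps$-net of $D$: for each point of $\mathcal{N}'$ whose $\eps/2$-ball meets $D$, it selects a representative inside $D$, and then appeals to the triangle inequality to check the resulting set is an $\eps$-net of $D$. You instead work directly inside $D$ with a maximal $\eps$-separated subset of $D$ itself, so the resulting set is automatically a subset of $D$ and automatically an $\eps$-net, and the two-step transfer is unnecessary. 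A side benefit is that your version gives the sharper constant $(1 + 2M/\eps)^2$, which you then loosen to match the stated bound. Both arguments are sound; yours is the more economical of the two and avoids the extra bookkeeping that the paper's route requires to keep the net inside $D$.
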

\begin{proof}
Let $\mathcal{N}'$ be a maximal $\eps/2$-separated subset of $S := \{z \in \mathbb{C} : |z| \leq M\}$.  In other words, $\mathcal{N}'$ is such that $|x-y| \geq \eps/2$ for all $x,y \in \mathcal{N}'$ with $x \neq y$, and no subset of $S$ containing $\mathcal{N}'$ has this property.  Such a set can always be constructed by starting with an arbitrary point in $S$ and at each step selecting a point that is at least $\eps/2$ distance away from those already selected.  Since $S$ is compact, this procedure will terminate after a finite number of steps.

The maximality property implies that $\mathcal{N}'$ is an $\eps/2$-net of $S$.  Indeed, otherwise there would exist $z \in S$ that is at least $\eps/2$-far from all points in $\mathcal{N}'$.  So $\mathcal{N}' \cup \{z\}$ would still be an $\eps/2$-separated set, contradicting the maximality property above.  

Moreover, the separation property implies that the balls of radii $\eps/4$ centered at the points in $\mathcal{N}'$ are disjoint.  In addition, all such balls lie in the ball of radius $M + \eps/4$ centered at the origin.  Comparing areas gives
$$ |\mathcal{N}'|  \left( \frac{\eps}{4} \right)^2 \leq \left( M + \frac{\eps}{4} \right)^2, $$
and hence 
$$ |\mathcal{N}'| \leq \left( 1 + \frac{4M}{\eps} \right)^2. $$
We now use $\mathcal{N}'$ to construct an $\eps$-net of $D$.  Indeed, we construct $\mathcal{N}$ iteratively using the following procedure.  Let $(x_n)_{n=1}^N$ be an enumeration of the points in $\mathcal{N}'$, and set $\mathcal{N}_0 := \emptyset$.  Given $\mathcal{N}_n$ for $0 \leq n \leq N-1$, we construct $\mathcal{N}_{n+1}$ as follows:
\begin{enumerate}
\item If the ball of radius $\eps/2$ centered at $x_{n+1}$ does not intersect $D$, then let $\mathcal{N}_{n+1} := \mathcal{N}_{n}$.  
\item If the ball of radius $\eps/2$ centered at $x_{n+1}$ does intersect $D$, let $y_{n+1}$ be an element of the intersection and set $\mathcal{N}_{n+1} := \mathcal{N}_n \cup \{y_{n+1} \}$.
\end{enumerate}
Now take $\mathcal{N} := \mathcal{N}_N$.  By the procedure above, it follows that $|\mathcal{N}| \leq |\mathcal{N}'|$.  It remains to show that $\mathcal{N}$ is an $\eps$-net of $D$.  Let $z \in D$.  Since $D \subseteq S$, there exists $x \in \mathcal{N}'$ such that $|x - z| \leq \eps/2$.  This means that the ball of radius $\eps/2$ centered at $x$ intersects $D$.  Thus, from the procedure above, there exists $y \in \mathcal{N}$ such that $|x - y| \leq \eps/2$.  Therefore, by the triangle inequality, $|z - y| \leq \eps$.  
\end{proof}

\subsection{Tools from linear algebra}

We will need the following companion matrix result, which describes a matrix whose eigenvalues are the critical points of a given polynomial.  This result appears to have originally been developed in \cite{KI} (see \cite[Lemma 5.7]{KI}).  However, the same result was later rediscovered and significantly generalized by Cheung and Ng \cite{CN,CN2}.

\begin{theorem}[Lemma 5.7 from \cite{KI}; Theorem 1.2 from \cite{CN2}] \label{thm:companion}
Let $p(z) := \prod_{j=1}^n (z - z_j)$ for some complex numbers $z_1, \ldots, z_n$, and let $D$ be the diagonal matrix $D := \diag(z_1, \ldots, z_n)$.  Then
$$ \frac{1}{n} z p'(z) = \det \left(zI - D \left( I - \frac{1}{n} J \right) \right), $$
where $I$ is the $n \times n$ identity matrix and $J$ is the $n \times n$ all-one matrix.  
\end{theorem}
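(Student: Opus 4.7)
The plan is to recognize $M := D(I - \tfrac{1}{n}J)$ as a rank-one perturbation of the diagonal matrix $D$ and then apply the matrix determinant lemma. Writing $\vect{z} := (z_1,\ldots,z_n)^T$ and $\vect{1}$ for the all-ones column vector, we have $DJ = \vect{z}\vect{1}^T$, since the $i$-th row of $DJ$ is the constant row $(z_i,\ldots,z_i)$. Therefore $M = D - \tfrac{1}{n}\vect{z}\vect{1}^T$, and
\[
zI - M = (zI - D) + \tfrac{1}{n}\vect{z}\vect{1}^T
\]
is indeed a rank-one update of the diagonal matrix $zI - D$.

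First I would fix $z\in\C$ with $z\neq z_j$ for all $j$, so that $zI - D$ is invertible. The matrix determinant lemma then gives
\[
\det(zI - M) \;=\; \det(zI - D)\,\Bigl(1 + \tfrac{1}{n}\vect{1}^T (zI-D)^{-1}\vect{z}\Bigr).
\]
The first factor is just $p(z) = \prod_{j=1}^n (z - z_j)$. For the second factor, since $(zI-D)^{-1}$ is diagonal with entries $1/(z-z_j)$,
\[
\vect{1}^T(zI-D)^{-1}\vect{z} \;=\; \sum_{j=1}^n \frac{z_j}{z - z_j} \;=\; \sum_{j=1}^n\!\Bigl(\tfrac{z}{z-z_j} - 1\Bigr) \;=\; z\,\frac{p'(z)}{p(z)} - n,
\]
where I have used the logarithmic derivative identity $p'(z)/p(z) = \sum_j 1/(z-z_j)$.

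Combining these two computations,
\[
\det(zI - M) \;=\; p(z)\Bigl(1 + \tfrac{1}{n}\Bigl(z\tfrac{p'(z)}{p(z)} - n\Bigr)\Bigr) \;=\; p(z)\cdot \frac{z p'(z)}{n\, p(z)} \;=\; \frac{1}{n} z p'(z),
\]
which is the desired identity. Finally, both $\det(zI - M)$ and $\tfrac{1}{n}z p'(z)$ are polynomials in $z$ of degree $n$, and they agree on the complement of the finite set $\{z_1,\ldots,z_n\}$, so by the polynomial identity theorem they agree for every $z\in\C$.

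I do not expect any serious obstacle here: the only subtlety is ensuring the matrix determinant lemma is applied where $zI-D$ is invertible and then extending by polynomial continuity, which is standard. The conceptual step is simply to spot the rank-one structure of the additive perturbation $\tfrac{1}{n}DJ = \tfrac{1}{n}\vect{z}\vect{1}^T$, after which the logarithmic-derivative manipulation closes the computation.
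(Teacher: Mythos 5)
Your proof is correct, and every step checks out: $DJ = \vect{z}\vect{1}^T$, so $zI - D(I-\tfrac{1}{n}J)$ is a rank-one update of $zI-D$; the matrix determinant lemma plus the logarithmic derivative identity $\sum_j 1/(z-z_j) = p'(z)/p(z)$ closes the computation for $z \notin \{z_1,\ldots,z_n\}$, and polynomial continuity finishes. Note that the paper itself does not prove this statement --- it cites it to Komarova--Rivin (Lemma 5.7 of \cite{KI}) and Cheung--Ng (Theorem 1.2 of \cite{CN2}) --- so there is no in-paper argument to compare against; but your route is the standard clean one and fits naturally with the rank-one machinery (Sherman--Morrison, Lemma \ref{lemma:SM}, and the block determinant, Lemma \ref{lemma:block}) that the paper deploys later in the proof of Theorem \ref{thm:reduction}.
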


Theorem \ref{thm:companion} allows us to translate the problem of studying critical points to a problem involving the eigenvalues of certain matrices.  For studying the eigenvalues of such matrices, we will need the following lemmata.

\begin{lemma}[Block determinant] \label{lemma:block}
Suppose $A,B,C$, and $D$ are matrices of dimension $n \times n$, $n \times m$, $m \times n$ and $m \times m$, respectively.  If $A$ is invertible, then
$$ \det \begin{pmatrix} A & B \\ C & D \end{pmatrix} = \det (A) \det (D - C A^{-1} B). $$
\end{lemma}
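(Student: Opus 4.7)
The plan is to reduce the block determinant to the determinants of two block triangular matrices via an explicit factorization. Concretely, I would begin by writing down the block LU decomposition
\[
\begin{pmatrix} A & B \\ C & D \end{pmatrix}
=
\begin{pmatrix} I_n & 0 \\ C A^{-1} & I_m \end{pmatrix}
\begin{pmatrix} A & B \\ 0 & D - C A^{-1} B \end{pmatrix},
\]
where $I_n$ and $I_m$ are the identity matrices of the indicated sizes. Direct block multiplication verifies this identity, and its validity requires only that $A$ be invertible (so that $C A^{-1}$ makes sense), which is exactly the hypothesis.

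Taking determinants of both sides and using multiplicativity of $\det$, it then suffices to evaluate the determinants of the two block triangular factors. The first factor is block lower triangular with identity blocks on the diagonal, so its determinant equals $1$. The second factor is block upper triangular with diagonal blocks $A$ and $D - C A^{-1} B$, so its determinant equals $\det(A) \det(D - C A^{-1} B)$. Combining these yields the desired formula.

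The only step that is not purely formal is the determinant formula for block triangular matrices. I would either invoke it as standard or sketch it via the Leibniz expansion: for a block upper triangular matrix with square diagonal blocks, every permutation contributing a nonzero term must respect the block partition (otherwise it picks up a factor from the zero block), so the sum factors as a product of the determinants of the diagonal blocks. There is no real obstacle here; the lemma is entirely elementary linear algebra, so the main thing to do in writing the proof is to present the factorization cleanly and cite (or sketch) the triangular-block determinant identity.
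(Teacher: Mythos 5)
Your proof is correct and uses essentially the same approach as the paper: a block LU-type factorization followed by multiplicativity of the determinant and the block-triangular determinant formula. The paper's factorization is the slightly different $\begin{pmatrix} A & 0 \\ C & I_m \end{pmatrix}\begin{pmatrix} I_n & A^{-1}B \\ 0 & D - CA^{-1}B \end{pmatrix}$, which distributes $\det(A)$ onto the first factor instead of the second, but this is a cosmetic variation of the same argument.
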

\begin{proof}
The conclusion follows immediately from the decomposition 
$$ \begin{pmatrix} A & B \\ C & D \end{pmatrix} = \begin{pmatrix} A & 0 \\ C & I_m \end{pmatrix} \begin{pmatrix} I_n & A^{-1} B \\ 0 & D - C A^{-1} B \end{pmatrix}, $$
where $I_n$ and $I_m$ are the identity matrices of dimension $n \times n$ and $m \times m$, respectively.  A similar proof is given in \cite[Section 0.8.5]{HJ}.  
\end{proof}


\begin{lemma}[Sherman--Morrison formula] \label{lemma:SM}
Suppose $A$ is an invertible matrix and $u,v$ are column vectors.  If ${1 + v^\mathrm{T} A^{-1} u \neq 0}$, then
$$ (A + uv^\mathrm{T})^{-1} = A^{-1} - \frac{A^{-1} u v^\mathrm{T} A^{-1}}{1 + v^\mathrm{T} A^{-1} u}. $$
\end{lemma}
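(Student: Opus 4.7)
The plan is simply to verify the identity by direct multiplication. Denote the claimed inverse by
\[
B := A^{-1} - \frac{A^{-1} u v^{\mathrm T} A^{-1}}{1 + v^{\mathrm T} A^{-1} u},
\]
which makes sense under the nonvanishing hypothesis on the denominator. I would then compute the product $(A + u v^{\mathrm T}) B$ and show that it equals the identity matrix $I$. Expanding out, one obtains
\[
(A + u v^{\mathrm T}) B = I + u v^{\mathrm T} A^{-1} - \frac{u v^{\mathrm T} A^{-1} + u v^{\mathrm T} A^{-1} u v^{\mathrm T} A^{-1}}{1 + v^{\mathrm T} A^{-1} u}.
\]

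The key observation, which makes everything collapse, is that $\alpha := v^{\mathrm T} A^{-1} u$ is a scalar. Therefore in the middle term inside the numerator of the fraction we may factor $\alpha$ out, writing $u v^{\mathrm T} A^{-1} u v^{\mathrm T} A^{-1} = \alpha \cdot u v^{\mathrm T} A^{-1}$. This turns the numerator into $(1 + \alpha) \, u v^{\mathrm T} A^{-1}$, which cancels exactly against the denominator $1 + \alpha$, leaving $(A + u v^{\mathrm T}) B = I + u v^{\mathrm T} A^{-1} - u v^{\mathrm T} A^{-1} = I$.

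Since $A + uv^{\mathrm T}$ is a square matrix, a one-sided inverse is automatically a two-sided inverse, so this calculation finishes the proof; alternatively, one can perform the symmetric computation $B(A + uv^{\mathrm T}) = I$ in the same way. There is no real obstacle here beyond careful bookkeeping of matrix products and remembering that $v^{\mathrm T} A^{-1} u$ is a scalar (and can thus be moved around freely), whereas $u v^{\mathrm T} A^{-1}$ is a rank-one matrix (and cannot). One remark worth noting: the hypothesis $1 + v^{\mathrm T} A^{-1} u \neq 0$ is not only used to make sense of $B$, it is in fact equivalent to invertibility of $A + u v^{\mathrm T}$ by the matrix determinant lemma, but since the statement only asserts the formula for the inverse (assuming the denominator is nonzero), the direct verification above suffices.
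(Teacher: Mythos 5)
Your direct verification is correct: writing $\alpha := v^{\mathrm T} A^{-1} u$ and observing that it is a scalar so it can be pulled out of $u v^{\mathrm T} A^{-1} u v^{\mathrm T} A^{-1} = \alpha\, u v^{\mathrm T} A^{-1}$ is exactly the standard argument, and the cancellation against the denominator $1+\alpha$ goes through. The paper itself does not prove Lemma \ref{lemma:SM}; it simply cites it to Bartlett \cite{B} and to \cite[Section 0.7.4]{HJ} as a known identity, so there is no in-paper argument to compare against. Your proof is a complete and self-contained substitute, and your closing remark on the equivalence of $1 + v^{\mathrm T} A^{-1} u \neq 0$ with invertibility of $A + u v^{\mathrm T}$ via the matrix determinant lemma is accurate, though not needed for the statement as given.
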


Lemma \ref{lemma:SM} can be found in \cite{B}; see also \cite[Section 0.7.4]{HJ} for a more general version of this identity known as the Sherman--Morrison--Woodbury formula.  We will also require the following bound involving the difference of two determinants.  For a matrix $A$, we let $\|A\|$ denote the spectral norm of $A$, i.e.,\ $\|A\|$ is the largest singular value of $A$.  

\begin{lemma} \label{lemma:detdiff}
Let $A$ and $B$ be $k \times k$ matrices.  If $\|A\|, \|B\| = O(1)$, then
$$ \left| \det(A) - \det(B)\right| \ll_k \|A - B\|. $$
\end{lemma}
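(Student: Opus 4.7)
The natural approach is to expand $\det(A) - \det(B)$ by a column-telescoping argument that exploits the multilinearity of the determinant. Writing $A = [a_1 \mid \cdots \mid a_k]$ and $B = [b_1 \mid \cdots \mid b_k]$ in terms of their columns, I would introduce the hybrid matrices
$$ C_j := [a_1 \mid \cdots \mid a_j \mid b_{j+1} \mid \cdots \mid b_k], \qquad 0 \leq j \leq k, $$
so that $C_0 = B$ and $C_k = A$. This gives the telescoping identity
$$ \det(A) - \det(B) = \sum_{j=1}^k \bigl( \det(C_j) - \det(C_{j-1}) \bigr). $$
By multilinearity, each summand equals
$$ \det\bigl[a_1 \mid \cdots \mid a_{j-1} \mid a_j - b_j \mid b_{j+1} \mid \cdots \mid b_k \bigr]. $$

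Next I would bound each such $k \times k$ determinant by Hadamard's inequality: the absolute value of a determinant is at most the product of the Euclidean norms of its columns. Since the spectral norm of a matrix dominates the Euclidean norm of each of its columns, we have $\|a_i\| \leq \|A\|$ and $\|b_i\| \leq \|B\|$, while $\|a_j - b_j\| \leq \|A - B\|$. Using the assumption $\|A\|, \|B\| = O(1)$, each hybrid determinant is bounded by $\|A - B\| \cdot O(1)^{k-1}$, with the implicit constant depending only on $k$ (and on the hidden constants in $\|A\|, \|B\| = O(1)$). Summing over the $k$ telescoping terms yields $|\det(A) - \det(B)| \ll_k \|A - B\|$, as desired.

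There is no real obstacle here; the argument is essentially a mean-value-type inequality for the multilinear determinant function. The only points to be mindful of are tracking the $k$-dependence of the constant (which arises both from the $k$ terms in the telescoping sum and from the $k-1$ factors bounded by $\|A\|$ or $\|B\|$ in each term) and the fact that the Hadamard bound applies to columns in the Euclidean norm, which is why bounding column norms by the spectral norm is the appropriate step. An equivalent approach would be to parametrize $A(t) := B + t(A-B)$ and integrate $\frac{d}{dt}\det(A(t))$ using Jacobi's formula, but the telescoping argument is more elementary and avoids having to bound the adjugate.
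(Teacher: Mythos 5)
Your proof is correct, and it takes a genuinely different (and arguably cleaner) route than the paper's. The paper expands $\det(A)$ and $\det(B)$ via the Leibniz formula, telescopes the difference of the two permutation products \emph{at the scalar level} inside each term of the sum over $\sigma$, and then bounds individual matrix entries by the spectral norm, yielding a bound of roughly $k! \cdot k \cdot O(1)^{k-1} \cdot \|A-B\|$. You instead telescope \emph{at the column level} using multilinearity of the determinant, so that $\det(A)-\det(B)$ is a sum of $k$ hybrid determinants each with exactly one column equal to $a_j - b_j$, and you control each hybrid determinant via Hadamard's inequality together with the fact that the spectral norm dominates column Euclidean norms; this gives the sharper $k \cdot O(1)^{k-1} \cdot \|A-B\|$ without ever invoking the Leibniz sum. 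Both arguments are essentially mean-value inequalities for the multilinear map $\det$, and both produce a constant depending only on $k$ (and the implicit constants in $\|A\|,\|B\| = O(1)$), so for the paper's purposes they are interchangeable; yours avoids the $k!$ overcount and makes the multilinear structure more transparent.
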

\begin{proof}
By the Leibniz formula for the determinant, it follows that
\begin{align}
	\left| \det(A) - \det(B) \right| &= \left| \sum_{\sigma} \sgn(\sigma) \left( \prod_{i=1}^k A_{\sigma(i), i} - \prod_{i=1}^k B_{\sigma(i), i} \right) \right| \nonumber \\
	&\leq \sum_{\sigma} \left| \prod_{i=1}^k A_{\sigma(i), i} - \prod_{i=1}^k B_{\sigma(i), i} \right|, \label{eq:detsum}
\end{align}
where the sums range over all permutations $\sigma$ of $\{1, \ldots, k\}$ and $\sgn(\sigma)$ is the sign of the permutation $\sigma$.  We now take advantage of the fact that the spectral norm of a matrix bounds the magnitude of each entry.  In particular, 
$$ \sup_{1 \leq i ,j \leq k} \left( |A_{ij}| + |B_{ij}| \right) \leq \|A\| + \|B\| = O(1) $$
and 
$$ \sup_{1 \leq i,j \leq k} |A_{ij} - B_{ij}| \leq \|A - B\|. $$
Thus, by multiple applications of the triangle inequality, we obtain
$$ \left| \prod_{i=1}^k A_{\sigma(i), i} - \prod_{i=1}^k B_{\sigma(i), i} \right| \ll_k \|A - B\| $$
uniformly in $\sigma$.  Combining this bound with \eqref{eq:detsum} completes the proof.  
\end{proof}

\subsection{Other tools}

We collect here some additional tools and facts we will need.  First, we note that if $\mu$ has compact support, then the convex hull of the support of $\mu$ is also a compact set; see \cite[Corollary 5.33]{IDA} for details.  

The following proposition shows that the zero set of the Cauchy--Stieltjes transform of $\mu$ must lie inside the convex hull of the support of $\mu$. It is a generalization of the Gauss--Lucas Theorem (Theorem \ref{thm:gauss}) in the sense that Proposition \ref{prop:Mmu} is precisely the Gauss--Lucas Theorem when $\mu$ is atomic.

\begin{proposition} \label{prop:Mmu}
Let $\mu$ be a probability measure on $\mathbb{C}$ with compact support.  If $m_{\mu}(z) = 0$ for some $z \not\in \supp(\mu)$, then $z \in \conv(\supp(\mu))$.  
\end{proposition}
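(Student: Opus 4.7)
The plan is to prove the contrapositive: if $z_0 \notin \conv(\supp(\mu))$, then $m_\mu(z_0) \ne 0$. The idea is that when $z_0$ lies strictly on one side of a hyperplane separating it from $\supp(\mu)$, the kernel $\frac{1}{z_0 - x}$ has real part of constant sign as $x$ ranges over $\supp(\mu)$, forcing the integral to be nonzero.

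First, I would reduce to a convenient normal form by an affine change of variables. If $T(w) = aw + b$ with $a \in \mathbb{C}^\times$, and $\nu$ is the pushforward of $\mu$ by $T$, then a direct computation gives $m_\nu(T(z)) = \tfrac{1}{a} m_\mu(z)$; in particular the zeros of $m_\mu$ correspond to those of $m_\nu$, and convex hulls transform to convex hulls. Assuming for contradiction that $z_0 \notin \conv(\supp(\mu))$, the set $\conv(\supp(\mu))$ is compact (the convex hull of a compact planar set is compact), so $\conv(\supp(\mu))$ and $\{z_0\}$ are disjoint compact convex sets. By the strict hyperplane separation theorem, there is a real line separating them; after rotating and translating (using the above transformation rule), I may assume
\[
\supp(\mu) \subset \{w \in \mathbb{C} : \Re w \leq 0\} \qquad \text{and} \qquad \Re(z_0) = \delta > 0.
\]

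Next, with $R := \sup\{|x| : x \in \supp(\mu)\} < \infty$, for every $x \in \supp(\mu)$ we have $\Re(z_0 - x) \geq \delta$ and $|z_0 - x| \leq |z_0| + R$. Therefore
\[
\Re\!\left(\frac{1}{z_0 - x}\right) = \frac{\Re(z_0 - x)}{|z_0 - x|^2} \geq \frac{\delta}{(|z_0| + R)^2} > 0
\]
for all $x \in \supp(\mu)$. Integrating this strictly positive lower bound against the probability measure $\mu$ gives $\Re(m_\mu(z_0)) \geq \delta/(|z_0|+R)^2 > 0$, hence $m_\mu(z_0) \neq 0$, which contradicts the hypothesis.

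There is no serious obstacle here: the whole argument is a packaging of strict convex separation together with the sign of the real part of the Cauchy kernel. The only point requiring a brief sanity check is that the affine reduction really is allowed, which is handled by the transformation identity $m_\nu(T(z)) = a^{-1} m_\mu(z)$ noted above.
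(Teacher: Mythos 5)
Your proof is correct and takes essentially the same approach as the paper: both prove the contrapositive via the hyperplane separation theorem and then observe that, after choosing the right direction, the Cauchy kernel $1/(z-x)$ has a real (or imaginary) part that is uniformly bounded away from zero on $\supp(\mu)$, so the integral cannot vanish. The only cosmetic difference is that you normalize the geometry up front with an affine change of variables (using $m_\nu(T(z)) = a^{-1}m_\mu(z)$), whereas the paper keeps the original coordinates and works with a rotation $e^{i\theta}$ of the conjugated difference.
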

\begin{proof}
Let $S := \conv(\supp(\mu))$, and define
\[
\overline{S} := \{ \overline{x} : x \in S \}.
\]
Suppose $z \not\in S$.  Then
\begin{align*}
	| m_{\mu}(z) | = | e^{i \theta} m_{\mu}(z) | \geq \left| \Im \left( e^{i \theta} m_{\mu}(z) \right) \right| = \left| \int_{\mathbb{C}} \frac{ \Im \left( e^{i\theta} (\overline{z} - \overline{x} ) \right) }{ |z- x|^2 } d\mu(x) \right| 
\end{align*}
for any $\theta \in \mathbb{R}$.  Since $\supp(\mu)$ is compact, it follows from \cite[Corollary 5.33]{IDA} that $\overline{S}$ is also compact.  Thus, by the hyperplane separation theorem, there exists a pair of parallel lines, separated by a gap $\eps > 0$, separating $\overline{S}$ and $\overline{z}$.  Let $\theta$ be the angle these lines make with the real axis (if they do not meet the real axis take $\theta = 0$).  Then $\Im \left( e^{i \theta} (\overline{z} - \overline{x}) \right)$ is of the same sign for all $x \in S$ and
$$ | \Im (e^{i \theta} (\overline{z} - \overline{x} )) | \geq \eps $$
for all $x \in S$.  Thus, we obtain
$$ | m_{\mu}(z) | \geq \eps \int_{\supp(\mu)} \frac{d \mu(x)}{|z - x|^2}. $$
As $\supp(\mu)$ is compact, there exists $M > 0$ such that $|z - x| \leq M$ for all $x \in S$.  Hence, we conclude that
$$ | m_{\mu}(z) | \geq \eps \frac{1}{M^2} > 0, $$
and the proof is complete.  
\end{proof}

We will also need the following observation concerning the translation of roots and critical points.  

\begin{proposition}[Translation of the critical points] \label{prop:translate}
Let $p$ be a monic polynomial of degree $n$, and suppose $w_1, \ldots, w_{n-1}$ are the critical points of $p$ counted with multiplicity.  Then, for any $a \in \mathbb{C}$, the critical points of $q(z) := p(z-a)$ are $w_{1} + a, \ldots, w_{n-1} + a$.  
\end{proposition}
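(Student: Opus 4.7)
The plan is to use nothing more than the chain rule. Writing $q(z) = p(z-a)$ and differentiating gives $q'(z) = p'(z-a)$. Since $p$ is monic of degree $n$, so is $q$, and thus $p'$ and $q'$ are both monic polynomials of degree $n-1$. By hypothesis, $p'$ factors as
\[
p'(z) = n \prod_{j=1}^{n-1}(z - w_j),
\]
counted with multiplicity. Substituting $z \mapsto z-a$ gives
\[
q'(z) = p'(z-a) = n \prod_{j=1}^{n-1}(z - a - w_j) = n \prod_{j=1}^{n-1}\bigl(z - (w_j + a)\bigr),
\]
which exhibits $w_1 + a, \ldots, w_{n-1} + a$ as the complete list of critical points of $q$, with the correct multiplicities. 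That is the whole argument.

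There is no real obstacle here: the only subtlety worth mentioning is the preservation of multiplicities, which is immediate because the affine substitution $z \mapsto z-a$ is a bijection of $\mathbb{C}$ that respects factorizations of polynomials. So the proof amounts to one application of the chain rule followed by reading off the factored form of $q'$.
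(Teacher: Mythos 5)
Your proof is correct and is essentially identical to the paper's: both factor $p'(z) = n\prod_{j=1}^{n-1}(z - w_j)$ and then apply the chain rule to get $q'(z) = p'(z-a) = n\prod_{j=1}^{n-1}\bigl(z - (w_j + a)\bigr)$. The remark on multiplicities is a nice touch but, as you say, is immediate from the factored form.
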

\begin{proof}
Since $p$ is a monic polynomial of degree $n$, 
$$ p'(z) = n \prod_{j=1}^{n-1}(z - w_j). $$
Thus,
$$ q'(z) = p'(z-a) = n \prod_{j=1}^{n-1} (z - a - w_{j}), $$
and the claim follows.  
\end{proof}

\section{Proof of Theorem \ref{thm:nooutliers}} \label{sec:proof:nooutliers}

This section is devoted to the proof of Theorem \ref{thm:nooutliers}.  For $\eps > 0$, define 
$$ \Su_{\mu}(\eps) := \left\{ z \in \mathbb{C} : \dist(z, \supp(\mu)) < \eps \right\} $$
to be the $\eps$-neighborhood of the support of $\mu$.  
We begin with the following concentration inequality.  
\begin{lemma} \label{lemma:lln}
Let $\mu$ be a probability measure on $\mathbb{C}$ with compact support, and suppose $X_1, \ldots, X_n$ are iid random variables with distribution $\mu$.  Then, for every $M, \eps, t > 0$, 
$$ \Prob \left( \sup_{{ z \in \mathbb{C} : |z| \leq M, z \not\in \Su_{\mu}(\eps) }} \left| \frac{1}{n} \sum_{j=1}^n \frac{1}{z - X_j} - m_{\mu}(z) \right| \geq t \right) \leq C \left( 1 + \frac{40M}{\eps^2 t} \right) \exp \left(-cn t^2 \eps^2 \right) $$
for some absolute constants $C,c > 0$.      
\end{lemma}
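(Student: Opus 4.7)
The strategy is the standard combination of a pointwise Hoeffding-type bound with a net argument, mediated by a Lipschitz estimate on the compact region
$$D := \{z \in \mathbb{C} : |z| \leq M\} \setminus \Su_\mu(\eps).$$
First, I would fix $z \in D$ and set $Y_j(z) := 1/(z-X_j)$. Since $\dist(z, \supp(\mu)) \geq \eps$, these are iid with $|Y_j(z)| \leq 1/\eps$ almost surely and mean $m_\mu(z)$. Lemma \ref{lemma:Hoeffding} applied with $K=1/\eps$ then gives, for each $s > 0$,
$$\Prob\left(\left|\frac{1}{n}\sum_{j=1}^n Y_j(z) - m_\mu(z)\right| \geq s\right) \leq C\exp(-cns^2 \eps^2).$$

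To promote this pointwise bound to a uniform one, I would observe that on $D$ both the random function $f_n(z) := \frac{1}{n}\sum_j (z-X_j)^{-1}$ and its limit $m_\mu$ have derivatives bounded in modulus by $1/\eps^2$, since $|z-X_j|\geq \eps$ on $\supp(\mu)$ for every $z\in D$. Hence the difference $g_n := f_n - m_\mu$ is deterministically $(2/\eps^2)$-Lipschitz on $D$. I would then invoke Lemma \ref{lemma:net} with mesh $\delta := t\eps^2/4$ to produce a $\delta$-net $\mathcal{N} \subset D$ of cardinality $|\mathcal{N}| \leq (1 + 4M/\delta)^2 = (1 + 16M/(t\eps^2))^2$.

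The closing step is a union bound. If $|g_n(z_0)| < t/2$ for every $z_0 \in \mathcal{N}$, then the Lipschitz control upgrades this to $\sup_{z\in D} |g_n(z)| < t/2 + (2/\eps^2)\delta = t$. Combined with the pointwise Hoeffding bound at level $s = t/2$, this yields
$$\Prob\left(\sup_{z \in D}|g_n(z)| \geq t\right) \leq |\mathcal{N}|\cdot C\exp(-cnt^2\eps^2/4),$$
and, after adjusting the absolute constants $C,c$, rearranging the squared prefactor into the linear form declared in the statement.

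The main point needing care is the bookkeeping that converts the squared net-size factor $(1 + 16M/(t\eps^2))^2$ into the advertised linear factor $1 + 40M/(\eps^2 t)$. This is inessential: one can either mildly worsen the constant $c$ inside the Gaussian (since a polylogarithmic factor in $1/(t\eps)$ is absorbed effortlessly by $\exp(-cnt^2\eps^2)$ for sufficiently large $n$), or carry the squared factor and note that it is dominated by a constant multiple of $(1 + 40M/(\eps^2 t))^2$, with the square harmlessly reallocated to the exponential via the elementary inequality $x^2 \leq 2^x$ for $x$ large together with handling the bounded regime separately. All other ingredients (complex Hoeffding, net cardinality, Lipschitz control of Cauchy--Stieltjes transforms away from the support) are already supplied by the tools section.
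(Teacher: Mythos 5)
Your proposal is correct and follows exactly the same strategy as the paper: a pointwise complex Hoeffding bound, the $1/\eps^2$-Lipschitz control of both $m_n$ and $m_\mu$ on $D$, a net via Lemma \ref{lemma:net}, and a union bound; the only cosmetic differences are your mesh $t\eps^2/4$ and threshold $t/2$ versus the paper's $\eps^2 t/10$ and $4t/5$. Regarding your closing remark on the prefactor: neither of the two fixes you sketch is actually rigorous (absorbing $(1+x)$ into $\exp(-cnt^2\eps^2)$ fails uniformly in $n$, and $x^2 \le 2^x$ produces an exponent in $x$ that has no counterpart in the stated bound), but no fix is needed, since the paper's own derivation also produces the squared factor $(1 + 40M/(\eps^2 t))^2$ before silently writing the linear form --- the lemma statement evidently carries a harmless typo that does not affect any application, where $M$, $\eps$, $t$ are held fixed and only the exponential decay in $n$ is used.
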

\begin{proof}
Let $M, \eps, t > 0$, and define
$$ D:= \{z \in \mathbb{C} : |z| \leq M, z \not\in \Su_{\mu}(\eps) \}. $$
We assume $D$ is nonempty as the conclusion is trivial otherwise. Let $\mathcal{N}$ be an $\eps^2 t / 10$-net of $D$.  By Lemma \ref{lemma:net}, $\mathcal{N}$ can be chosen so that
\begin{equation} \label{eq:netbnd}
	|\mathcal{N}| \leq \left( 1 + \frac{40 M}{\eps^2 t } \right)^2. 
\end{equation}

We observe that $X_j \in \supp(\mu)$ almost surely for every $1 \leq j \leq n$.  Thus, almost surely, for $z \in D$, 
\begin{equation} \label{eq:bndonS}
	\left| \frac{1}{z - X_j} \right| \leq \frac{1}{\eps}. 
\end{equation}
Hence, for $z, w \in D$, 
$$ \left| \frac{1}{n} \sum_{j=1}^n \frac{1}{z - X_j} - \frac{1}{n} \sum_{j=1}^n \frac{1}{w - X_j} \right| \leq \frac{|z-w|}{\eps^2}. $$
In other words, the function $m_n(z) := \frac{1}{n} \sum_{j=1}^n \frac{1}{z - X_j}$ is almost surely Lipschitz continuous on $D$ with Lipschitz constant $\eps^{-2}$.  Similarly, for $z,w \in D$,
\begin{align*}
	|m_{\mu}(z) - m_{\mu}(w)| &= \left| \int_{\supp(\mu)} \left( \frac{1}{z - x} - \frac{1}{w-x} \right) d \mu(x) \right| \\
	&\leq \int_{\supp(\mu)} \frac{|z-w|}{|z-x| |w-x| } d\mu(x) \\
	&\leq \frac{|z-w|}{\eps^2}. 
\end{align*}

Suppose $\sup_{z \in D} \left| m_n(z) - m_{\mu}(z) \right| \geq t$.  As $m_n$ and $m_{\mu}$ are both continuous on the compact set $D$, there exists $z \in D$ such that $|m_n(z) - m(z)| \geq t$.  Since $\mathcal{N}$ is an $\eps^2 t / 10$-net of $D$, there exists $w \in \mathcal{N}$ such that $|z-w| \leq \frac{\eps^2 t}{10}$.  So, by the reverse triangle inequality and the fact that the $m_n$ and $m_{\mu}$ are Lipschitz continuous, we have 
\begin{align*}
	|m_n(w) - m_{\mu}(w)| &\geq |m_n(z) - m_{\mu}(z)| - |m_n(z) - m_n(w) - (m_{\mu}(z) - m_{\mu}(w))| \\
	&\geq t - 2 \frac{|z-w|}{\eps^2} \\
	&\geq \frac{4t}{5}. 
\end{align*}
Therefore, by the union bound, we conclude that
\begin{align} 
	\Prob \left( \sup_{z \in D} | m_n(z) - m_{\mu}(z)| \geq t \right) &\leq \Prob \left( \sup_{w \in \mathcal{N}} |m_n(w) - m_{\mu}(w)| \geq \frac{4t}{5} \right) \nonumber \\
	&\leq \sum_{w \in \mathcal{N}} \Prob \left( |m_n(w) - m_{\mu}(w)| \geq \frac{4t}{5} \right) \label{eq:probbndzinD}
\end{align}
As $\E m_n(z) = m_{\mu}(z)$ for $z \in D$, Hoeffding's inequality (Lemma \ref{lemma:Hoeffding}) and the bound in \eqref{eq:bndonS} imply that
\begin{equation} \label{eq:supprobwbnd}
	\sup_{w \in \mathcal{N} } \Prob \left( |m_n(w) - m_{\mu}(w)| \geq \frac{4t}{5} \right) \leq C \exp(-cn t^2 \eps^2) 
\end{equation}
for some absolute constants $C, c > 0$.  Thus, combining \eqref{eq:netbnd}, \eqref{eq:probbndzinD}, and \eqref{eq:supprobwbnd} yields 
$$ \Prob \left( \sup_{z \in D} | m_n(z) - m_{\mu}(z)| \geq t \right) \leq C \left( 1 + \frac{40M}{\eps^2 t} \right) \exp(-c n t^2 \eps^2), $$
as desired.  
\end{proof}

We now prove Theorem \ref{thm:nooutliers}.

\begin{proof}[Proof of Theorem \ref{thm:nooutliers}]
Let $\eps > 0$.  With probability one, $X_j \in \supp(\mu)$ for each $1 \leq j \leq n$.  Thus, the zeros of 
$$ m_n(z) := \frac{1}{n} \frac{p_n'(z) }{p_n(z)} = \frac{1}{n} \sum_{j=1}^n \frac{1}{z - X_j} $$
outside of $\N_{\mu}(\eps)$ are exactly the critical points of $p_n$ outside of $\N_{\mu}(\eps)$.  We will show that $m_n(z)$ has no zeros in $D := \conv(\supp(\mu)) \setminus \N_{\mu}(\eps)$.  The claim then follows immediately since, by the Gauss--Lucas theorem (Theorem \ref{thm:gauss}), all the critical points of $p_n$ lie in $\conv(\supp(\mu))$.  

Since $\mu$ has compact support, $\conv(\supp(\mu))$ is also a compact set (see \cite[Corollary 5.33]{IDA}), and hence $D$ is compact.  As $m_{\mu}$ is a continuous function on $D$, $|m_{\mu}|$ achieves its minimum on $D$, which, by definition of $\N_{\mu}(\eps)$ cannot be zero (since $\N_{\mu}(\eps)$ contains the zero set $\M_{\mu}$).  Thus, there exists $c' > 0$ such that 
$$ |m_{\mu}(z)| \geq c'  \quad \text{for all } z \in D. $$

Since $D$ is compact, there exists $M > 0$ (depending only on $\supp(\mu)$) such that $|z| \leq M$ for all $z \in D$.  Thus, by Lemma \ref{lemma:lln} (taking $t = c'/2$), we obtain
$$ \Prob \left( \sup_{z \in D} \left| m_n(z) - m_{\mu}(z) \right| \geq \frac{c'}{2} \right) \leq C \left( 1 + \frac{80M}{\eps^2 c'} \right) \exp(-cn c' \eps^2) $$ 
for some absolute constants $C, c >0$.  Hence, on the complementary event, we have
$$ |m_n(z)| \geq |m_{\mu}(z)| - |m_n(z) - m_{\mu}(z)| \geq \frac{c'}{2} $$
for all $z \in D$.  Since the constants $C \left( 1 + \frac{80M}{\eps^2 c'} \right)$ and $c c' \eps^2$ only depend on $\mu$ and $\eps$, the proof is complete.  
\end{proof}

\section{Proof of Theorems \ref{thm:simple}, \ref{thm:main}, and \ref{thm:distinguished}} \label{sec:proofs}

This section is devoted to the proof of Theorems \ref{thm:simple}, \ref{thm:main}, and \ref{thm:distinguished}.

\subsection{Proof of Theorem \ref{thm:simple}}

We now prove Theorem \ref{thm:simple} using Theorem \ref{thm:main}.  Indeed, let $\xi_1, \ldots, \xi_k$ satisfy the assumptions of Theorem \ref{thm:simple}.  Since $\xi_1, \ldots, \xi_k$ do not depend on $n$, there exists $\eps_0 > 0$ such that, for any $0 < \eps < \eps_0$, 
\begin{itemize}
\item $\xi_1, \ldots, \xi_s$ are outside $\N_{\mu}(3 \eps)$,
\item $\xi_{s+1}, \ldots, \xi_{k}$ are in $\N_{\mu}(\eps)$.
\end{itemize}
In addition, condition \eqref{eq:detmaxbnd} trivially holds because $\xi_1, \ldots, \xi_k$ do not depend on $n$.  Thus, Theorem \ref{thm:main} is applicable for any $0 < \eps < \eps_0$, and hence Theorem \ref{thm:simple} follows.

\subsection{Proof of Theorems \ref{thm:main} and \ref{thm:distinguished}}

We will prove Theorem \ref{thm:main} via the following result.  

\begin{theorem} \label{thm:reduction}
Let $\mu$ be a probability measure on $\mathbb{C}$ with compact support, and suppose $0 \in \supp(\mu)$.  Let $X_1, X_2, \ldots$ be iid random variables with distribution $\mu$.  For each $n \geq 1$, let $\xi_1^{(n)}, \ldots, \xi_{k_n}^{(n)}$ be a triangular array of deterministic complex numbers with $k_n = O(1)$, and assume $\max\{ |\xi_1^{(n)}|, \ldots, |\xi_{k_n}^{(n)}| \} = O(1)$.  Fix $\eps > 0$, and suppose that for all sufficiently large $n$, there are no values of $\xi_1^{(n)}, \ldots, \xi_{k_n}^{(n)}$ in $\N_{\mu}(3 \eps) \setminus \N_{\mu}(\eps)$ and there are $s$ values $\xi_1^{(n)}, \ldots, \xi_{s}^{(n)}$ outside $\N_{\mu}(3 \eps)$.  Then, almost surely, for $n$ sufficiently large, there are exactly $s$ critical points (counted with multiplicity) of the polynomial 
$$ p_n(z) := \prod_{j=1}^{n-{k_n}} (z - X_j) \prod_{l=1}^{k_n} (z - \xi_l^{(n)}) $$ 
outside $\N_{\mu}(2 \eps)$, and after labeling these critical points correctly, 
$$ w_{l}(p_n) = \xi_l^{(n)} + o(1) $$
for each $1 \leq l \leq s$.  
\end{theorem}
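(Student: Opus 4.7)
The plan is to use the companion-matrix reformulation of critical points (Theorem \ref{thm:companion}) to convert the problem into an eigenvalue problem for a rank-one perturbation of a diagonal matrix, then isolate the contribution of the $s$ outlying deterministic roots via a Schur-complement block decomposition. Set $D := \diag(X_1,\ldots,X_{n-k_n},\xi_1^{(n)},\ldots,\xi_{k_n}^{(n)})$, let $d$ be the column vector of its diagonal entries, and let $\mathbf{1}$ denote the all-ones vector. Theorem \ref{thm:companion} gives $\frac{1}{n} z p_n'(z) = \det(zI_n - M)$ with $M := D - \frac{1}{n} d \mathbf{1}^T$. Because $0 \in \supp(\mu) \subset \N_\mu(2\eps)$, the extra factor of $z$ contributes only an irrelevant eigenvalue inside $\N_\mu(2\eps)$, so the critical points of $p_n$ outside $\N_\mu(2\eps)$ coincide with the eigenvalues of $M$ there. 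I would permute coordinates so the outlying $\xi_l^{(n)}$'s ($l\le s$) occupy the last $s$ positions and write $D = \diag(\tilde D, D_\xi)$, $d = (\tilde d, d_\xi)^T$. Combining the Schur-complement identity (Lemma \ref{lemma:block}) with the Sherman--Morrison formula (Lemma \ref{lemma:SM}) applied to $(zI_{n-s} - \tilde M)^{-1}$ where $\tilde M := \tilde D - \frac{1}{n} \tilde d \mathbf{1}_{n-s}^T$, I would derive the factorization
\[
\det(zI_n - M) \;=\; \det(zI_{n-s} - \tilde M)\cdot \det\!\Bigl(zI_s - D_\xi + \tfrac{1-\alpha(z)}{n}\,d_\xi \mathbf{1}_s^T\Bigr),
\]
where, with $v(z) := \sum_{j} \tilde d_j / (z - \tilde d_j)$, one has $\alpha(z) = v(z)/(n + v(z))$ and $\det(zI_{n-s} - \tilde M) = \det(zI - \tilde D)\cdot(1 + v(z)/n)$.

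The analytic input is that $v(z)/n = z \tilde m_n(z) - (n-s)/n$ with $\tilde m_n(z) := \frac{1}{n}\sum_{j=1}^{n-s}(z - \tilde d_j)^{-1}$, and Lemma \ref{lemma:lln} combined with Borel--Cantelli yields $\tilde m_n(z) = m_\mu(z) + o(1)$ almost surely, uniformly on the compact region $K := \{|z| \le R\} \setminus \N_\mu(2\eps)$, where $R$ is chosen large enough to contain $\supp(\mu)$ and all the $\xi_l^{(n)}$ (possible by \eqref{eq:detmaxbnd}). Crucially, on $K$ the quantity $z m_\mu(z)$ is bounded below away from zero: $m_\mu \ne 0$ outside $\M_\mu \subset \N_\mu(\eps)$, and $z \ne 0$ since $0 \in \supp(\mu) \subset \N_\mu(\eps)$. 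Hence $1 + v(z)/n = z m_\mu(z) + o(1)$ is uniformly bounded away from zero on $K$, and since each $\tilde d_j \in \N_\mu(\eps)$ we also have $\det(zI - \tilde D) \ne 0$ on $K$; together this shows the first factor has no zeros on $K$ almost surely for large $n$. Meanwhile, $1 - \alpha(z) = n/(n + v(z)) = 1/(z m_\mu(z)) + o(1) = O(1)$ uniformly on $K$, so the rank-one correction $\frac{1 - \alpha(z)}{n} d_\xi \mathbf{1}_s^T$ has spectral norm $O(1/n)$.

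To locate the outlying zeros, I would invoke Lemma \ref{lemma:detdiff} to conclude that the second factor differs from the monic polynomial $\det(zI_s - D_\xi) = \prod_{l=1}^s (z - \xi_l^{(n)})$ by $O(1/n)$ uniformly on $K$, and then use continuity of roots of monic polynomials (or Rouch\'e's theorem on small disks around each distinct value in $\{\xi_l^{(n)}\}_{l \le s}$, which by hypothesis sit at distance at least $\eps$ from $\overline{\N_\mu(2\eps)}$) to deduce that the $s$ zeros of the second factor lie within $o(1)$ of the outlying roots $\xi_l^{(n)}$. Combined with the first factor's lack of zeros on $K$, this gives exactly $s$ critical points of $p_n$ outside $\N_\mu(2\eps)$, each of the form $\xi_l^{(n)} + o(1)$ after relabeling. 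The principal obstacle is establishing the \emph{uniform} almost-sure convergence $\tilde m_n(z) \to m_\mu(z)$ on $K$ with exponential tails suitable for Borel--Cantelli; this is precisely where the net-plus-Hoeffding argument of Lemma \ref{lemma:lln} does the heavy lifting, and once that uniform estimate is in hand the rest reduces to finite-dimensional linear algebra on the $s \times s$ block and a standard polynomial-perturbation argument.
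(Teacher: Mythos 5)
Your proposal is correct and follows essentially the same route as the paper's proof: the companion-matrix identity, the ``in/out'' block partition, the Schur complement combined with Sherman--Morrison, the uniform law of large numbers (Lemma \ref{lemma:lln} plus Borel--Cantelli) to control the bulk block, the determinant-perturbation bound (Lemma \ref{lemma:detdiff}), and Rouch\'{e}'s theorem to pin the $s$ outlying critical points. The only notable cosmetic difference is that you restrict at the outset to a bounded region $K$ justified by Gauss--Lucas, whereas the paper's Lemma \ref{lemma:technical} treats $|z| \geq M$ separately; your explicit $v(z)$, $\alpha(z)$ bookkeeping is an equivalent and slightly tidier packaging of the same computation.
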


The only difference between this theorem and Theorem \ref{thm:main} is that Theorem \ref{thm:reduction} assumes $0 \in \supp(\mu)$.  Using Theorem \ref{thm:reduction}, we prove Theorem \ref{thm:main} by applying Proposition \ref{prop:translate}.  

\begin{proof}[Proof of Theorem \ref{thm:main}]
Let $\mu$ have compact support.  Since $\supp(\mu)$ is nonempty, choose $a \in \supp(\mu)$.  We now consider the polynomial 
$$ p_n(z + a) = \prod_{j=1}^{n-k_n} (z - (X_j - a) ) \prod_{l=1}^{k_n} ( z - (\xi_l^{(n)} - a)) = \prod_{j=1}^{n-k_n}(z - Y_j) \prod_{l=1}^{k_n} (z - (\xi_l^{(n)} - a)), $$
where $Y_j := X_j - a$.  Let $\nu$ be the distribution of $Y_1$.  Then $\nu$ has compact support and $0 \in \supp(\nu)$.  In addition, the sets $\M_{\nu}$ and $\supp(\nu)$ are translates by $-a$ of the sets $\M_{\mu}$ and $\supp(\mu)$, respectively.  Thus, by assumption, there are no values of $\xi_1^{(n)} - a, \ldots, \xi_{k_n}^{(n)} - a$ in $\N_{\nu}(3\eps) \setminus \N_{\nu}(\eps)$ and there are $s$ values $\xi_1^{(n)} - a, \ldots, \xi_s^{(n)} - a$ outside $\N_{\nu}( 3\eps)$.  Therefore, by Theorem \ref{thm:reduction} and Proposition \ref{prop:translate}, we conclude that almost surely, for $n$ sufficiently large, there are exactly $s$ critical points of $p_n$ outside $\N_{\mu}(2 \eps)$ and after labeling correctly, 
$$ w_{l}(p_n) - a = \xi_l^{(n)} - a  + o(1)  $$
for $1 \leq l \leq s$.  Adding $a$ to both sides completes the proof.  
\end{proof}

Similarly, Theorem \ref{thm:distinguished} can be proven using the following.  

\begin{theorem} \label{thm:reductiondist}
Let $\mu$ be a probability measure on $\mathbb{C}$ with compact support, and suppose $0 \in \supp(\mu)$.  Let $X_1, X_2, \ldots$ be iid random variables with distribution $\mu$.  For each $n \geq 1$, let $\xi_1^{(n)}, \ldots, \xi_{k_n}^{(n)}$ be a triangular array of deterministic complex numbers with $k_n = O(1)$.  Fix $\eps > 0$, and suppose that for all sufficiently large $n$, there are no values of $\xi_1^{(n)}, \ldots, \xi_{k_n}^{(n)}$ in $\N_{\mu}(3 \eps) \setminus \N_{\mu}(\eps)$ and there is one value $\xi_1^{(n)}$ outside $\N_{\mu}(3 \eps)$.  Then, almost surely, for $n$ sufficiently large, there is exactly one critical point of the polynomial 
$$ p_n(z) := \prod_{j=1}^{n-{k_n}} (z - X_j) \prod_{l=1}^{k_n} (z - \xi_l^{(n)}) $$ 
outside $\N_{\mu}(2 \eps)$, and after labeling the critical points correctly, 
$$ w_{1}(p_n) = \xi_1^{(n)} \left( 1 + O \left( 1/n \right) \right). $$
\end{theorem}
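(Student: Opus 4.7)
The plan is to translate $p_n'(\lambda) = 0$ (for $\lambda$ away from the roots of $p_n$) into the fixed-point equation
\begin{equation*}
\lambda - \xi_1^{(n)} = -\frac{1}{m_n(\lambda)}, \quad m_n(\lambda) := \sum_{j=1}^{n-k_n}\frac{1}{\lambda - X_j} + \sum_{l=2}^{k_n}\frac{1}{\lambda - \xi_l^{(n)}},
\end{equation*}
and to analyze this equation by complex-analytic methods, splitting on whether $\xi_1^{(n)}$ is bounded. The hypothesis forces $\xi_2^{(n)}, \ldots, \xi_{k_n}^{(n)} \in \N_\mu(\eps)$ to be uniformly bounded and $|\xi_1^{(n)} - \xi_l^{(n)}| \geq 2\eps$ for $l \geq 2$, while $|\xi_1^{(n)}| \geq 3\eps$ (since $0 \in \supp(\mu) \subset \N_\mu(3\eps)$); so only $\xi_1^{(n)}$ itself may be unbounded.

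\emph{Bounded regime, $|\xi_1^{(n)}| = O(1)$.} Theorem \ref{thm:reduction} already supplies existence and uniqueness of an outlying critical point with $w_1(p_n) = \xi_1^{(n)} + o(1)$; I would sharpen the rate by inserting this approximation back into the fixed-point equation. A uniform Hoeffding estimate (Lemma \ref{lemma:lln}) yields $m_n(\lambda) = (n-k_n)\, m_\mu(\lambda) + O(\sqrt{n\log n})$ almost surely, uniformly over a compact neighborhood of the bounded set containing the $\xi_1^{(n)}$'s; the finite sum over $l \geq 2$ contributes only $O(1)$ by the separation estimate. Since $m_\mu$ is continuous and nonvanishing off $\supp(\mu) \cup \M_\mu$, we have $|m_\mu(\lambda)| \geq c > 0$ on the compact set $\{|\lambda - \xi_1^{(n)}| \leq \eps\} \setminus \N_\mu(2\eps)$, so $|m_n(\lambda)| \asymp n$, and the fixed-point equation forces $|\lambda - \xi_1^{(n)}| = O(1/n)$. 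Since $|\xi_1^{(n)}| \geq 3\eps$, this gives $w_1(p_n) = \xi_1^{(n)}(1 + O(1/n))$.

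\emph{Unbounded regime, $|\xi_1^{(n)}| \to \infty$.} Here I would bypass probabilistic concentration entirely and use the deterministic expansion $1/(\lambda - X_j) = 1/\lambda + O(1/|\lambda|^2)$ (uniform in $X_j \in \supp(\mu)$ once $|\lambda|$ exceeds the diameter of $\supp(\mu)$), which yields $m_n(\lambda) = (n/\lambda)(1 + O(1/|\lambda|))$ on an annulus around $\xi_1^{(n)}$. I would then apply Rouch\'e's theorem to $h(\lambda) := 1 + (\lambda - \xi_1^{(n)})\, m_n(\lambda)$ on a disk of radius $\rho \asymp |\xi_1^{(n)}|/n$ centered at $\lambda_0 := \xi_1^{(n)} - 1/m_n(\xi_1^{(n)})$, comparing it with the linearization $\tilde h(\lambda) := 1 + (\lambda - \xi_1^{(n)})\, m_n(\xi_1^{(n)})$. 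The derivative bound $|m_n'(\lambda)| = O(n/|\xi_1^{(n)}|^2)$ (every $X_j$ and every $\xi_l^{(n)}$ with $l \geq 2$ sits at distance $\asymp |\xi_1^{(n)}|$ from $\lambda$) delivers $|h - \tilde h| = O(1/n) \ll |\tilde h| \asymp 1$ on the boundary, producing a unique zero $\lambda_* = \lambda_0 + O(|\xi_1^{(n)}|/n^2) = \xi_1^{(n)}(1 + O(1/n))$ of $h$. For uniqueness of the outlier in this regime, I would use the Gauss--Lucas theorem to confine critical points to $\conv(\supp(\mu) \cup \{\xi_l^{(n)}\})$ and combine the two estimates above to rule out critical points of intermediate size (any bounded candidate $\lambda$ outside $\N_\mu(2\eps)$ would force $|\lambda - \xi_1^{(n)}| = O(1/n)$, contradicting $|\xi_1^{(n)}| \to \infty$).

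The main obstacle I anticipate is the unbounded regime: the sharp derivative bound $|m_n'(\lambda)| = O(n/|\xi_1^{(n)}|^2)$ (rather than a naive $O(n)$) is essential to make the Rouch\'e error $|h - \tilde h|$ smaller than $|\tilde h|$ on the correct scale $\rho \asymp |\xi_1^{(n)}|/n$, and it is this improved bound that yields the sharp $O(1/n)$ relative rate in \eqref{eq:location} rather than merely $o(1)$.
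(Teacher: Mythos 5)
Your fixed-point equation $1 + (\lambda - \xi_1^{(n)})\,m_n(\lambda) = 0$, obtained by writing $p_n = (z - \xi_1^{(n)}) q_n$ and dividing $p_n'$ by $q_n$, is equivalent to the scalar equation \eqref{eq:distzeros} that the paper extracts, but you arrive at it without the companion-matrix and block-determinant machinery, so this is a genuinely different route. The key structural divergence is how the dependence on $|\xi_1^{(n)}|$ is handled. The paper proves Lemma \ref{lemma:technical}, which furnishes an $O(1)$ bound on $\frac{1}{n}\mathbf{1}^{\mathrm T} G(z) D_{\In} \mathbf{1}$ uniformly over $z \notin \N_{\mu}(2\eps)$ and uniformly in $|\xi_1^{(n)}|$ (the quantity does not involve $D_{\Out}$ at all), so a single Rouch\'e comparison against $z - \xi_1^{(n)}$ with error $\frac{C}{n}|\xi_1^{(n)}|$ handles every case at once. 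Your approach instead splits on whether $|\xi_1^{(n)}|$ stays bounded or escapes to infinity, using Hoeffding concentration (Lemma \ref{lemma:lln}) in the first regime and the deterministic expansion $m_n(\lambda) = \frac{n}{\lambda}(1 + O(1/|\lambda|))$ in the second; the essential insight you correctly identify is that $|m_n'|$ scales like $n/|\xi_1^{(n)}|^2$ near $\xi_1^{(n)}$, not like $n$, which is what makes the Rouch\'e error controllable at the scale $\rho \asymp |\xi_1^{(n)}|/n$. Both approaches are sound; the paper's is tidier precisely because it avoids the case analysis.

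Two places where your write-up would need to be tightened into a proof. First, because the conclusion is an almost-sure, for-all-large-$n$ statement rather than a limit in probability, the bounded/unbounded dichotomy on $\xi_1^{(n)}$ cannot be handled by a subsequence argument; you would instead fix a threshold $M$ (large enough that the deterministic expansion is valid for $|\xi_1^{(n)}| > M$) and run the concentration argument on the single fixed compact set $\{|z| \le M + \eps\} \setminus \N_{\mu}(\eps)$ for all $n$ with $|\xi_1^{(n)}| \le M$, so that all implicit constants and the almost-sure threshold for $n$ are uniform. Second, your uniqueness claim in the unbounded regime dismisses only ``bounded'' competing zeros; you also need to rule out zeros $\lambda$ with $M' < |\lambda|$ that are not near $\xi_1^{(n)}$. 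This does follow, since Gauss--Lucas puts any critical point inside $\conv(\supp\mu \cup \{\xi_l^{(n)}\})$ so $|\lambda| \le C_0 + |\xi_1^{(n)}|$, and then $|\lambda - \xi_1^{(n)}| = |m_n(\lambda)|^{-1} \asymp |\lambda|/n \lesssim |\xi_1^{(n)}|/n$, which forces $\lambda$ into the Rouch\'e disk after all. But this step deserves to be made explicit rather than left as a parenthetical. Finally, note that invoking Theorem \ref{thm:reduction} in the bounded regime makes the argument lean on the paper's companion-matrix proof anyway; for a self-contained alternative you would want to run Rouch\'e directly on the fixed-point equation in that regime too.
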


The proof of Theorem \ref{thm:distinguished} using Theorem \ref{thm:reductiondist} is nearly identical to the proof of Theorem \ref{thm:main} above; we omit the details.  It remains to prove Theorems \ref{thm:reduction} and \ref{thm:reductiondist}.

\subsection{Proof of Theorems \ref{thm:reduction} and \ref{thm:reductiondist}}

We prove Theorems \ref{thm:reduction} and \ref{thm:reductiondist} simultaneously.  Indeed, for the first part of the proof, we continue to use the notation of Theorem \ref{thm:reduction}.  However, the same argument applies to Theorem \ref{thm:reductiondist} by simply taking $s=1$.  The conclusion of the proof will require us to consider the conditions of both theorems separately.  In fact, the conclusion of the proof is the only place where we require condition \eqref{eq:detmaxbnd}.  For notational convenience, throughout the proof we allow the implicit constants and rates of convergence in our asymptotic notation (such as $O, o$) to depend on the parameter $\eps$ without notating this dependence.  

For $n$ sufficiently large, we decompose
$$ p_n(z) = \prod_{j=1}^{n-{k_n}} (z - X_j) \prod_{l=1}^s (z - \xi_{l}^{(n)}) \prod_{l={s+1}}^{k_n} (z - \xi_l^{(n)}), $$
where, by assumption, $\xi_{1}^{(n)}, \ldots, \xi_s^{(n)}$ are outside $\N_{\mu}(3 \eps)$ and $\xi_{s+1}^{(n)}, \ldots, \xi_{k_n}^{(n)}$ are in $\N_{\mu}(\eps)$.  In addition, $X_1, \ldots, X_{n-k_n}$ are in $\supp(\mu) \subset \N_{\mu}(\eps)$ with probability $1$.

Let $D$ be the diagonal matrix 
$$ D := \begin{pmatrix} D_\In & 0 \\ 0 & D_\Out \end{pmatrix}, $$
where
$$ D_\In := \diag(X_1, \ldots, X_{n- k_n}, \xi_{s+1}^{(n)}, \ldots, \xi_{k_n}^{(n)}) $$
and
$$ D_\Out := \diag(\xi_{1}^{(n)}, \ldots, \xi_{s}^{(n)}). $$
Here, the subscripts ``$\In$'' and ``$\Out$'' refer to the roots inside and outside $\N_{\mu}(\eps)$, respectively.  Of course, $D$, $D_\In$, and $D_\Out$ all depend on $n$, but we do not denote this dependence in our notation.  

By Theorem \ref{thm:companion}, it follows that 
\begin{align}
	\frac{1}{n} z p_n'(z) &= \det \left( zI - D + \frac{1}{n} DJ_n \right) \label{eq:firstdeteq} \\
	&= \det \left[ \begin{pmatrix} z I & 0 \\ 0 & z I \end{pmatrix} - \begin{pmatrix} D_\In & 0 \\ 0 & D_\Out \end{pmatrix} + \frac{1}{n} \begin{pmatrix} D_\In & 0 \\ 0 & D_\Out \end{pmatrix} J_n \right], \nonumber
\end{align}
where $I$ is the identity matrix and $J_n$ is the $n \times n$ all-one matrix.  We decompose, 
$$ J_n = \begin{pmatrix} J_{n-s} & J_{n-s, s} \\ J_{s, n-s} & J_{s} \end{pmatrix}, $$
where $J_{l,m}$ denotes the $l \times m$ all-one matrix.  Thus, we conclude that
\begin{equation} \label{eq:blockdet}
	\frac{1}{n} z p_n'(z) = \det \begin{pmatrix} zI - D_\In + \frac{1}{n} D_\In J_{n-s} & \frac{1}{n} D_\In J_{n-s,s} \\ \frac{1}{n} D_\Out J_{s,n-s} & zI - D_\Out + \frac{1}{n} D_\Out J_{s} \end{pmatrix}. 
\end{equation}
We will eventually apply Lemma \ref{lemma:block} to compute this determinant, but first we will need to consider the upper-left block
$$ zI - D_\In + \frac{1}{n} D_\In J_{n-s}. $$
Let $\Bj_n$ denote the all-one $n$-vector; we will often drop the subscript (and just write $\Bj$) when its size can be deduced from context.  We will make use of the following lemma.  

\begin{lemma} \label{lemma:technical}
Under the assumptions of Theorem \ref{thm:reduction} (alternatively, Theorem \ref{thm:reductiondist}), almost surely, for $n$ sufficiently large, the matrix 
\begin{equation} \label{eq:zIDmatrix}
	zI - D_\In + \frac{1}{n} D_\In J_{n-s} 
\end{equation}
is invertible for every $z \not\in \N_{\mu}(2 \eps)$ and the function 
\begin{equation} \label{eq:analytic}
	z \mapsto \frac{1}{n} \Bj^\mathrm{T} \left( zI - D_\In + \frac{1}{n} D_\In J_{n-s} \right)^{-1} D_{\In} \Bj 
\end{equation}
is analytic outside $\N_{\mu}(2\eps)$.  In addition, almost surely 
$$ \sup_{z \in \mathbb{C} \setminus \N_{\mu}(2 \eps)} \left| \frac{1}{n} \Bj^\mathrm{T} \left( zI - D_\In + \frac{1}{n} D_\In J_{n-s} \right)^{-1} D_{\In} \Bj \right| = O(1). $$
\end{lemma}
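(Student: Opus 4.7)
The plan is to recognize that the matrix in \eqref{eq:zIDmatrix} is a rank-one perturbation of the diagonal matrix $A := zI - D_\In$, apply the Sherman--Morrison formula (Lemma \ref{lemma:SM}), and reduce the whole lemma to a scalar lower bound. Observe that $\frac{1}{n}D_\In J_{n-s} = u\Bj^\mathrm{T}$ with $u := \frac{1}{n}D_\In \Bj$. For $z \notin \N_\mu(2\eps)$, the hypothesis gives $\dist(z, \supp(\mu) \cup \M_\mu) \geq 2\eps$, and each diagonal entry of $D_\In$ lies almost surely in $\supp(\mu) \cup \N_\mu(\eps)$, so $|z - (D_\In)_{ii}| > \eps$ for every $i$; hence $A$ is invertible. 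Setting $T(z) := \Bj^\mathrm{T} A^{-1} D_\In \Bj = \sum_{i=1}^{n-s} \frac{(D_\In)_{ii}}{z - (D_\In)_{ii}}$, a short computation from Sherman--Morrison yields
\[
\frac{1}{n}\Bj^\mathrm{T}\bigl(A + u\Bj^\mathrm{T}\bigr)^{-1} D_\In \Bj \;=\; \frac{T(z)}{n + T(z)},
\]
provided $n + T(z) \neq 0$. Thus invertibility, analyticity on $\mathbb{C} \setminus \N_\mu(2\eps)$, and the $O(1)$ sup bound will all follow once I establish a uniform lower bound $|n+T(z)| \geq cn$ on $\mathbb{C} \setminus \N_\mu(2\eps)$ almost surely for large $n$.

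To prove that bound, I would rewrite, using $\frac{y}{z-y} = -1 + \frac{z}{z-y}$,
\[
\frac{n + T(z)}{n} \;=\; \frac{s}{n} + \frac{z}{n}\sum_{i=1}^{n-s}\frac{1}{z - (D_\In)_{ii}},
\]
and split into two regimes. For $|z| \geq R_0$, where $R_0$ is chosen larger than an almost sure uniform bound $M$ on $|(D_\In)_{ii}|$ (finite by compactness of $\supp(\mu)$ and by \eqref{eq:detmaxbnd}), one more expansion $\frac{z}{z-y} = 1 + \frac{y}{z-y}$ gives $\frac{n+T(z)}{n} = 1 + O(M/|z|) + O(1/n)$, so taking $R_0$ large forces $|n+T(z)| \geq n/2$ there deterministically (for $n$ large). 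On the compact complement $K := \{z : |z| \leq R_0,\, z \notin \N_\mu(2\eps)\}$, the hypothesis $0 \in \supp(\mu) \subset \N_\mu(2\eps)$ forces $|z| \geq 2\eps$, and $m_\mu$ is continuous and nonvanishing on $K$ (since $K \subset \mathbb{C} \setminus (\supp(\mu) \cup \M_\mu)$), so $|z m_\mu(z)| \geq c_0 > 0$ on $K$. Applying Lemma \ref{lemma:lln} with $t := c_0/(4R_0)$ together with the Borel--Cantelli lemma (the tail probability there is exponentially small in $n$) yields almost surely, for $n$ sufficiently large,
\[
\sup_{z \in K}\left|\frac{1}{n-k_n}\sum_{j=1}^{n-k_n}\frac{1}{z - X_j} - m_\mu(z)\right| \leq \frac{c_0}{4R_0}.
\]
Since $k_n = O(1)$, the contribution of the $\xi_l^{(n)}$ terms (with $l > s$) to $\frac{1}{n}\sum_{i} \frac{1}{z - (D_\In)_{ii}}$ is only $O(1/n)$, so $\frac{n+T(z)}{n} = z m_\mu(z) + o(1)$ uniformly on $K$; in particular $|n+T(z)| \geq c_0 n / 2$ there.

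Combining the two regimes gives the uniform lower bound. Invertibility of \eqref{eq:zIDmatrix} is then immediate, and the identity $\frac{T(z)}{n+T(z)} = 1 - \frac{n}{n+T(z)}$ delivers $\left|\frac{T(z)}{n+T(z)}\right| \leq 1 + 2/\min(1/2, c_0) = O(1)$ uniformly in $z$. Analyticity on $\mathbb{C} \setminus \N_\mu(2\eps)$ follows because $T(z)$ is rational with poles only at the $(D_\In)_{ii} \in \N_\mu(\eps)$, and the denominator $n + T(z)$ does not vanish there. I expect the main obstacle to be the uniform control of $|n+T(z)|$ on the unbounded set: the asymptotic $z m_\mu(z) \to 1$ as $|z| \to \infty$ is what reduces the probabilistic estimate to a compact set, and the hypothesis $0 \in \supp(\mu)$ (which explains why Theorem \ref{thm:reduction} is normalized this way) is precisely what keeps $|z|$, and therefore $|z m_\mu(z)|$, bounded below on the rest of $\mathbb{C} \setminus \N_\mu(2\eps)$.
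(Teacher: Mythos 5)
Your argument is essentially the paper's: you apply Sherman--Morrison to reduce the object to the scalar $T(z)/(n+T(z))$, where $T(z)=\sum_i (D_\In)_{ii}/(z-(D_\In)_{ii})$, then prove a uniform lower bound on $|n+T(z)|$ by splitting $\C\setminus\N_\mu(2\eps)$ into the far region $\{|z|\geq R_0\}$ (deterministic bound) and the compact remainder (Lemma~\ref{lemma:lln} plus Borel--Cantelli, with $0\in\supp(\mu)$ giving a lower bound on $|z\,m_\mu(z)|$). The paper does exactly this, just without the cosmetic simplification to $T/(n+T)$, and the algebraic identity and the decomposition $\frac{n+T(z)}{n}=\frac{s}{n}+\frac{z}{n}\sum 1/(z-(D_\In)_{ii})$ match \eqref{eq:SMin} and \eqref{eq:identfullexpand}.

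One small but worth-fixing slip: you invoke \eqref{eq:detmaxbnd} to get the uniform bound $M$ on $|(D_\In)_{ii}|$. That would be bad, because the lemma must also hold under Theorem~\ref{thm:reductiondist}, which does \emph{not} assume \eqref{eq:detmaxbnd}. Fortunately you never actually need it: $D_\In$ collects only the random roots $X_j\in\supp(\mu)$ and the deterministic roots $\xi_l^{(n)}$ with $l>s$, and by hypothesis the latter lie in $\N_\mu(\eps)$; so every entry of $D_\In$ lies in $\N_\mu(\eps)$, which is bounded because $\supp(\mu)$ is compact and $\M_\mu\subset\conv(\supp(\mu))$ by Proposition~\ref{prop:Mmu}. (The outliers $\xi_1^{(n)},\dots,\xi_s^{(n)}$, which would need \eqref{eq:detmaxbnd} to control, sit in $D_\Out$ and never enter this lemma.) Replace the citation accordingly. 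Also, the $O(1/n)$ in your far-region estimate $\frac{n+T(z)}{n}=1+O(M/|z|)+O(1/n)$ is spurious---the $s/n$ and $(n-s)/n$ combine exactly to $1$---but this is harmless.
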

\begin{proof}
Recall that the entries of the diagonal matrix $D_\In$ are contained in $\N_{\mu}(\eps)$.  Thus, for $z \not\in \N_{\mu}(2 \eps)$, the matrix $zI - D_\In$ is invertible.  In addition, since $(zI - D_\In)^{-1}$ is a diagonal matrix, we obtain
\begin{align}
	\frac{1}{n} \Bj^\mathrm{T} ( zI - D_\In)^{-1} D_\In \Bj &= \frac{1}{n} \tr [(zI - D_\In )^{-1} D_\In]  \nonumber \\
	&= \frac{1}{n} \sum_{j=1}^{n-k_n} \frac{X_j}{z - X_j} + \frac{1}{n} \sum_{l=s+1}^{k_n} \frac{\xi_l^{(n)}}{ z - \xi_l^{(n)}}.  \label{eq:trpartial}
\end{align}
Among other things, this implies that the function $\frac{1}{n} \Bj^\mathrm{T} ( zI - D_\In)^{-1} D_\In \Bj$ is analytic outside $\N_{\mu}(2\eps)$; we will use this fact later to show that the function in \eqref{eq:analytic} is analytic on the same set.    
Since $\supp(\mu)$ is compact, it follows from Proposition \ref{prop:Mmu} that $\N_{\mu}(\eps)$ is bounded.  Let $\kappa > 0$ be such that $|z| \leq \kappa$ for all $z \in \N_{\mu}(\eps)$.  Let $M := 10 \kappa$.  Then for $|z| \geq M$, we have
$$ |X_j| \leq \kappa, \quad |z - X_j| \geq M - \kappa = 9 \kappa $$
for $1 \leq j \leq n-k_n$ and similarly 
$$ |\xi_l^{(n)}| \leq \kappa, \quad |z - \xi_l^{(n)}| \geq M - \kappa = 9 \kappa $$
for each $s+1 \leq l \leq k_n$.  Thus, 
\begin{equation} \label{eq:bigzbnd}
	\sup_{|z| \geq M} \left| \frac{1}{n} \Bj^\mathrm{T} ( zI - D_\In)^{-1} D_\In \Bj  \right| \leq \frac{\kappa}{9 \kappa} = \frac{1}{9}. 
\end{equation}
In particular, this bound implies that $1 +  \frac{1}{n} \Bj^\mathrm{T} ( zI - D_\In)^{-1} D_\In \Bj \neq 0$ for all $|z| \geq M$.  Thus, we can apply Lemma \ref{lemma:SM} to conclude that the matrix in \eqref{eq:zIDmatrix} is invertible for every $|z| \geq M$.  Indeed, since $\frac{1}{n} D_\In J_{n-s} = \frac{1}{n} D_\In \Bj \Bj^\mathrm{T}$ is at most rank one\footnote{Here, we have used the fact that $J_{n-s}$ is rank one, and so the product $D_\In J_{n-s}$ is either rank one or rank zero.  In fact, a simple computation reveals that the product is rank zero if and only if $D_\In$ is the zero matrix.}, it follows from Lemma \ref{lemma:SM} (taking $u = D_\In\Bj$ and $v = \Bj$) that
\begin{align}
	\frac{1}{n} \Bj^\mathrm{T} &\left( zI - D_\In + \frac{1}{n} D_\In J_{n-s} \right)^{-1} D_\In \Bj \nonumber \\
		&= \frac{1}{n} \Bj^\mathrm{T} (zI - D_\In)^{-1} D_\In \Bj - \frac{ \left( \frac{1}{n} \Bj^\mathrm{T} (zI - D_\In)^{-1} D_\In \Bj \frac{1}{n} \right)^2}{1 + \frac{1}{n} \Bj^\mathrm{T} ( zI - D_\In)^{-1} D_\In \Bj }. \label{eq:SMin}
\end{align}
Hence, by the bound in \eqref{eq:bigzbnd}, we have, with probability one, 
$$ \sup_{|z| \geq M} \left| \frac{1}{n} \Bj^\mathrm{T} \left( zI - D_\In + \frac{1}{n} D_\In J_{n-s} \right)^{-1} D_\In \Bj \right| \leq \frac{1}{9} + \frac{ \left(\frac{1}{9}\right)^2}{1 - \frac{1}{9}} = O(1). $$
In addition, the right-hand side of \eqref{eq:SMin} is analytic in the region $|z| \geq M$, which implies that the function on the left-hand side is also analytic in the same region.  

Let $\Omega$ be the compact set $\{z \in \mathbb{C} : |z| \leq M\} \setminus \N_{\mu}(2\eps)$.  It remains to show that, almost surely, for $n$ sufficiently large, the matrix in \eqref{eq:zIDmatrix} is invertible for every $z \in \Omega$, the function in \eqref{eq:analytic} is analytic in $\Omega$, and
$$ \sup_{z \in \Omega} \left| \frac{1}{n} \Bj^\mathrm{T} \left( zI - D_\In + \frac{1}{n} D_\In J_{n-s} \right)^{-1} D_\In \Bj \right| = O(1). $$
To establish these results we will again apply Lemma \ref{lemma:SM}.  However, in this case, we will need more precise estimates than those established above.  

Indeed, returning to \eqref{eq:trpartial}, we find that
\begin{align} \label{eq:identfullexpand}
	\frac{1}{n} \Bj^\mathrm{T} ( zI - D_\In)^{-1} D_\In \Bj &= - \frac{n-s}{n} + \frac{z}{n} \sum_{j=1}^{n-k_n} \frac{1}{z - X_j} + \frac{z}{n} \sum_{l=s+1}^{k_n} \frac{1}{z - \xi_{l}^{(n)}}.
\end{align}
Since $\xi_{s+1}^{(n)}, \ldots, \xi_{k_n}^{(n)}$ are contained in $\N_{\mu}(\eps)$, it follows from the triangle inequality that
\begin{equation} \label{eq:bnddetin}
	\sup_{z \in \Omega} \left | \frac{z}{n} \sum_{l=s+1}^{k_n} \frac{1}{z - \xi_{l}^{(n)}} \right| \leq \frac{k_n}{n} \frac{M}{\eps} = o(1). 
\end{equation}
In addition, by Lemma \ref{lemma:lln} and the Borel--Cantelli lemma, we have, almost surely
\begin{equation} \label{eq:bndrndin}
	\sup_{z \in \Omega} \left| \frac{z}{n} \sum_{j=1}^{n-k_n} \frac{1}{z - X_j} - z m_{\mu}(z) \right| = o(1). 
\end{equation}
As $\Omega$ is compact and $m_{\mu}$ cannot vanish on $\Omega$ (since $M_{\mu} \subset N_{\mu}(\eps)$), there exists $C, c > 0$ such that $c \leq |m_{\mu}(z)| \leq C$ for all $z \in \Omega$.  Specifically, by the assumption that $0 \in \supp(\mu)$, it follows that
\begin{equation} \label{eq:lowupbndzmu}
	\eps c \leq |z m_{\mu}(z)| \leq M C, \quad \text{ for all } z \in \Omega. 
\end{equation}

Therefore, by \eqref{eq:bnddetin}, \eqref{eq:bndrndin}, and \eqref{eq:lowupbndzmu}, we conclude from \eqref{eq:identfullexpand} that, almost surely, for $n$ sufficiently large, 
$$ \sup_{z \in \Omega} \left| \frac{1}{n} \Bj^\mathrm{T} ( zI - D_\In)^{-1} D_\In \Bj \right| \leq 2 + MC $$
and
$$ \inf_{z \in \Omega} \left| 1 + \frac{1}{n} \Bj^\mathrm{T} ( zI - D_\In)^{-1} D_\In \Bj \right| \geq \frac{\eps c}{2}. $$
Hence, by Lemma \ref{lemma:SM}, we obtain \eqref{eq:SMin} for $z \in \Omega$ which, combined with the bounds above, yields
$$ \sup_{z \in \Omega} \left| \frac{1}{n} \Bj^\mathrm{T} \left( zI - D_\In + \frac{1}{n} D_\In J_{n-s} \right)^{-1} D_\In \Bj \right| \leq 2 + MC + \frac{(2 + MC)^2}{\frac{c \eps}{2} } = O(1) $$
almost surely.  As before, \eqref{eq:SMin} also implies that the function in \eqref{eq:analytic} is analytic on $\Omega$.  The proof of the lemma is complete.  
\end{proof}

Let us dispatch the simplest case of Theorem \ref{thm:reduction}: when $s = 0$.  Indeed, if $s = 0$, then $D = D_\In$.  In this case, \eqref{eq:firstdeteq} and the invertibility of \eqref{eq:zIDmatrix} imply that $p_n$ has no critical points outside $\N_{\mu}(2 \eps)$, completing the proof.  Thus, for the remainder of the proof, we assume $s \geq 1$.  

We return to the block determinant in \eqref{eq:blockdet}.  By Lemma \ref{lemma:technical}, almost surely, for $n$ sufficiently large, the upper-left block is invertible for all $z \not\in \N_{\mu}(2 \eps)$.  Thus, by Lemma \ref{lemma:block}, we conclude that almost surely
\begin{align*}
	\frac{1}{n} z p_n'(z) = \det &\left(zI - D_\In + \frac{1}{n} D_\In J_{n-s} \right) \\
	&\qquad \times \det \left( zI - D_\Out + \frac{1}{n} D_\Out J_s - \frac{1}{n} D_\Out J_{s,n-s} G(z) \frac{1}{n} D_\In J_{n-s,s} \right) 
\end{align*}
for all $z \not\in \N_{\mu}(2 \eps)$, where
$$ G(z) := \left( zI - D_\In + \frac{1}{n} D_\In J_{n-s} \right)^{-1}. $$
In other words, the zeros of $p_n'$ outside of $\N_{\mu}(2\eps)$ (counted with multiplicity) are precisely the zeros of 
\begin{equation} \label{eq:exactzerosfirst}
	\det \left( zI - D_\Out + \frac{1}{n} D_\Out J_s - \frac{1}{n} D_\Out J_{s,n-s} G(z) \frac{1}{n} D_\In J_{n-s,s} \right) 
\end{equation}
outside of $\N_{\mu}(2\eps)$ (counted with multiplicity).  
Notice that this is the determinant of an $s \times s$ matrix, and $s \leq k_n = O(1)$.  We have thus reduced the problem of studying an $n \times n$ matrix to an $s \times s$ matrix.  This reduction greatly simplifies the forthcoming analysis.  Before we conclude the proof, we make one final observation: since $J_{s,n-s} = \Bj_s \Bj_{n-s}^\mathrm{T}$ and $J_{n-s,s} = \Bj_{n-s} \Bj_{s}^\mathrm{T}$, we can rewrite the determinant in \eqref{eq:exactzerosfirst} as
\begin{equation} \label{eq:exactzeros}
	\det \left( zI - D_\Out + \frac{1}{n} D_\Out J_s - \frac{1}{n^2} \left( \Bj_{n-s}^\mathrm{T} G(z) D_\In \Bj_{n-s} \right) D_\Out J_s \right). 
\end{equation}

We now conclude the proof of Theorems \ref{thm:reduction} and \ref{thm:reductiondist} separately.  Let us begin with Theorem \ref{thm:reduction}.  Indeed, under the assumptions of Theorem \ref{thm:reduction}, 
$$ \|D_{\Out}\| = \max\{|\xi_1^{(n)}|, \ldots, |\xi_s^{(n)}|\} =  O(1). $$  
(Recall that $\|D_\Out\|$ denotes the spectral norm of the matrix $D_\Out$.)  Thus, by Lemma \ref{lemma:detdiff} and Lemma \ref{lemma:technical}, we have, almost surely
\begin{align*}
	&\sup_{z \not\in \N_{\mu}(2\eps)} \left| \det \left( zI - D_\Out + \frac{1}{n} D_\Out J_s - \frac{1}{n^2} D_\Out J_{s,n-s} G(z) D_\In J_{n-s,s} \right) - \det(zI - D_\Out) \right| \\
	&\qquad\qquad \ll \frac{1}{n} \|D_\Out\| \|J_s\| + \|D_\Out\| \|J_s\| \sup_{z \not\in \N_{\mu}(2\eps)} \left| \frac{1}{n^2} \Bj_{n-s}^{\mathrm{T}}G(z) D_\In \Bj_{n-s} \right| \\
	&\qquad \qquad \ll \frac{1}{n}
\end{align*}
because $\|J_s\| = s \leq k_n = O(1)$.  Notice that the zeros of $\det(zI - D_{\Out})$ are precisely the values $\xi_1^{(n)}, \ldots, \xi_s^{(n)}$.  In view of Rouch\'{e}'s theorem (since both determinants are analytic outside $\N_{\mu}(2 \eps)$ due to Lemma \ref{lemma:technical}), we conclude that, almost surely, for $n$ sufficiently large, $p_n$ has exactly $s$ critical points outside $N_{\mu}(2\eps)$, and after correctly labeling the critical points, 
\begin{equation} \label{eq:improverouche}
	w_{l}(p_n) = \xi_{l}^{(n)} + o(1) 
\end{equation}
for each $1 \leq l \leq s$.  This completes the proof of Theorem \ref{thm:reduction}.

\begin{remark}
With a more careful application of Rouch\'{e}'s theorem, the error in \eqref{eq:improverouche} can be improved to \[w_l(p_n) = \xi_l^{(n)} + O(n^{-\tau}) \] for each $1 \leq l \leq s$, where $\tau > 0$ depends on $s$.  In addition, if the deterministic roots $\xi_l^{(n)}$, $1 \leq l \leq s$ satisfy some kind of separation criteria, this error term can be further improved.  We do not pursue these matters here.   
\end{remark}

We now turn to the proof of Theorem \ref{thm:reductiondist}.  Recall that, in this case, $s = 1$.  Thus, the matrix in \eqref{eq:exactzeros} is just a $1 \times 1$ matrix, and hence the zeros of $p_n'$ outside of $\N_{\mu}(2 \eps)$ are precisely the solutions of 
\begin{equation} \label{eq:distzeros}
	z - \xi_1^{(n)} + \frac{1}{n} \xi_1^{(n)} - \xi_1^{(n)} \frac{1}{n^2} \Bj^\mathrm{T} G(z) D_\In \Bj = 0 
\end{equation}
outside $\N_{\mu}(2 \eps)$.  By Lemma \ref{lemma:technical}, we have, almost surely, 
\begin{equation} \label{eq:rouchebnd}
	\sup_{z \not\in \N_{\mu}(2\eps)} \left| \left(z - \xi_1^{(n)} + \frac{1}{n} \xi_1^{(n)} - \xi_1^{(n)} \frac{1}{n^2} \Bj^\mathrm{T} G(z) D_\In \Bj \right) - \left(z - \xi_1^{(n)} \right) \right| \leq \frac{C}{n} |\xi_1^{(n)}| 
\end{equation}
for some constant $C > 0$.  Since both these terms are analytic outside $\N_{\mu}(2\eps)$ due to Lemma \ref{lemma:technical}, we can again apply Rouch\'{e}'s theorem.  However, since $\frac{C}{n} |\xi_1^{(n)}|$ does not necessarily converge to zero, we have to be slightly more careful.  Let $\Gamma_n$  be any simple closed contour outside $\N_{\mu}(2\eps)$ which satisfies $|z - \xi_1^{(n)}| > \frac{C}{n} |\xi_1^{(n)}|$ for all $z \in \Gamma_n$.  Then, by the estimate in \eqref{eq:rouchebnd}, Rouch\'{e}'s theorem implies that the number of solutions to \eqref{eq:distzeros} inside $\Gamma_n$ is the same as the number of zeros of $z - \xi_1^{(n)}$ inside $\Gamma_n$.  Hence, we conclude that almost surely, for $n$ sufficiently large, there is exactly one critical point of $p_n$ outside $\N_{\mu}(2\eps)$ and that critical point takes the value $\xi_1^{(n)}(1 + O(1/n))$.  The proof of Theorem \ref{thm:reductiondist} is complete.


\appendix

\section{Proof of Theorem \ref{thm:genkabluchko}} \label{sec:genkabluchko}

The proof of Theorem \ref{thm:genkabluchko} presented here is modeled after Kabluchko's proof of \cite[Theorem 1.1]{K}. We note that Theorem \ref{thm:genkabluchko} does not follow from the results in \cite{K}, and the notable difference between our proof and the one given in \cite{K} is that we must control the additional contribution coming from the deterministic triangular array.  
For convenience, we use $\mu_n$ and $\mu'_n$ to mean $\mu_{p_n}$ and $\mu_{p'_n}$, respectively and define 
\begin{equation} \label{eq:def:Xi}
	\Xi := \bigcup_{n=1}^\infty \left\{\xi_{l}^{(n)}: 1 \leq l \leq k_n \right\} 
\end{equation}
to be the collection of values present in the deterministic triangular array.  We let $\lambda$ represent Lebesgue measure on $\C$, and we denote the positive and negative parts of the real logarithm by
\[\log_{-}x := \begin{cases}\abs{\log{x}}, & 0 \leq x \leq 1,\\ 0,&x \geq 1,\end{cases}\quad\text{and}\quad \log_{+}x := \begin{cases} 0,& 0 \leq x \leq 1,\\ \log{x},&x \geq 1, \end{cases}\]
for $x \in [0, \infty)$.  We use the convention that $\log_{-}(0) := \infty$ so that $\log_{-}(\cdot)$ is a function taking values in the extended real line.  

We prove Theorem \ref{thm:genkabluchko} using the following result, which requires the deterministic array satisfy an additional assumption.  
\begin{theorem}
Under the same hypotheses as in Theorem \ref{thm:genkabluchko} and with the additional assumption that there is a set $E$ of Lebesgue measure zero for which $z \in \C \setminus E$ implies
\begin{equation}
\limsup_{n\to \infty}\frac{1}{n}\sum_{l=1}^{k_{n}}\log_{-}{\abs{z-\xi_l^{(n)}}} = 0,
\label{eqn:kabl:detCond}
\end{equation}
it follows that $\mu'_{n}$ converges weakly to $\mu$ in probability as $n \to \infty$. 
\label{thm:kabl:detCond}
\end{theorem}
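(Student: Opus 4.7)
The plan is to adapt Kabluchko's logarithmic-potential method from \cite{K} to accommodate the deterministic triangular array.  Writing $m_{\nu}(z) := \int_{\C} (z-w)^{-1}\, d\nu(w)$ for the Cauchy--Stieltjes transform and $U_\nu(z) := \int_{\C} \log|z-w|\, d\nu(w)$ for the logarithmic potential of a finite measure $\nu$, the identity $p_n'(z) = n\, m_{\mu_n}(z)\, p_n(z)$, obtained by logarithmic differentiation, yields
\[
U_{\mu_n'}(z) \;=\; \frac{n}{n-1}\, U_{\mu_n}(z) \;+\; \frac{\log n + \log|m_{\mu_n}(z)|}{n-1}
\]
at every $z$ with $p_n(z) \ne 0$.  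The goal thus reduces to proving that $U_{\mu_n}(z) \to U_\mu(z)$ and $\tfrac{1}{n}\log|m_{\mu_n}(z)| \to 0$, both in probability, for Lebesgue-a.e.\ $z \in \C$, and then converting this pointwise-a.e.\ convergence of potentials into weak convergence of $\mu_n'$ to $\mu$.

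For the convergence of $U_{\mu_n}$, I would split $\mu_n = \tilde\mu_n + \hat\mu_n$ into its random part $\tilde\mu_n := \tfrac{1}{n}\sum_{j=1}^{n-k_n}\delta_{X_j}$ and its deterministic part $\hat\mu_n := \tfrac{1}{n}\sum_{l=1}^{k_n}\delta_{\xi_l^{(n)}}$.  For $\tilde\mu_n$ I would reproduce Kabluchko's truncation plus SLLN argument: decompose $\log|z-X_j| = \log_{+}|z-X_j| - \log_{-}|z-X_j|$, truncate the $\log_{-}$-part at a level $T$, apply the strong law to the truncated sum, and then send $T \to \infty$; this gives $U_{\tilde\mu_n}(z) \to U_\mu(z)$ in probability for Lebesgue-a.e.\ $z$.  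For $\hat\mu_n$ the $\log_{-}$ contribution vanishes by assumption \eqref{eqn:kabl:detCond}, while the $\log_{+}$ contribution is controlled on compact $z$-sets by $\tfrac{k_n}{n}\log_{+}|z| + \tfrac{1}{n}\sum_l \log_{+}|\xi_l^{(n)}|$; a further truncation in $|\xi_l^{(n)}|$ combined with $k_n = o(n)$ suffices, at least after passing from pointwise to $L^{1}_{\mathrm{loc}}$-convergence.

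To control $\tfrac{1}{n}\log|m_{\mu_n}(z)|$, the upper bound follows from $|m_{\mu_n}(z)| \le \tfrac{1}{n}\sum_j |z - X_j|^{-1} + \tfrac{1}{n}\sum_l |z - \xi_l^{(n)}|^{-1}$ together with a Borel--Cantelli estimate ruling out exponentially small denominators among the $X_j$, and direct use of \eqref{eqn:kabl:detCond} for the deterministic sum.  The lower bound follows from $m_{\mu_n}(z) \to m_\mu(z)$ in probability at Lebesgue-a.e.\ $z$ outside $\supp(\mu) \cup \M_\mu$: since $m_\mu$ is holomorphic on $\C \setminus \supp(\mu)$, its zero set $\M_\mu$ has Lebesgue measure zero, so $|m_\mu(z)|$ is strictly positive at such points and hence $\log|m_{\mu_n}(z)| = O_p(1) = o(n)$.

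Finally, to turn $U_{\mu_n'}(z) \to U_\mu(z)$ at Lebesgue-a.e.\ $z$ into weak convergence of the measures in probability, I would test against $\phi \in C_c^\infty(\C)$ via the distributional identity $\int \phi\, d\mu_n' = \tfrac{1}{2\pi}\int \Delta\phi \cdot U_{\mu_n'}\, d\lambda$, apply a dominated-convergence argument with a uniform-in-$n$ integrable envelope for $|U_{\mu_n'}|$ on $\supp(\Delta\phi)$ (again supplied by the truncation estimates above), and use tightness of $\{\mu_n'\}$, inherited from that of $\{\mu_n\}$ via the iid SLLN and $k_n = o(n)$, to upgrade convergence against $C_c$ to convergence against all bounded continuous functions.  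The main obstacle I anticipate is the lower bound on $|m_{\mu_n}(z)|$: the deterministic array could, a priori, produce cancellations in $m_{\mu_n}$ not present in $m_\mu$, and controlling them at the exponential scale is precisely where assumption \eqref{eqn:kabl:detCond} is indispensable.
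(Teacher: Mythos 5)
Your proposal follows the same broad log-potential strategy as the paper (Kabluchko's method: write $L_n = p_n'/p_n$, show $\tfrac1n\log|L_n| \to 0$ pointwise a.e. and in $L^1_{\mathrm{loc}}$, then pass to weak convergence via the distributional Laplacian), and several pieces line up with what the paper does: the Borel--Cantelli control of exponentially small $|z-X_j|$, the use of hypothesis \eqref{eqn:kabl:detCond} to tame the deterministic $\log_-$ sum, and the local integrability of $\log^2$. However, there is a genuine gap in your lower bound on $|L_n(z)|$, and it is the crux of the theorem.

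You propose deducing $\tfrac1n\log|m_{\mu_n}(z)| \ge -\eps$ from the fact that $m_{\mu_n}(z)\to m_\mu(z)$ in probability and $m_\mu(z)\neq 0$ Lebesgue-a.e.\ on $\C\setminus\supp(\mu)$. This argument only addresses $z$ outside $\supp(\mu)$, but the convergence is needed for Lebesgue-a.e.\ $z\in\C$. When $\mu$ has support of positive Lebesgue measure (uniform measure on a disk, say), your argument says nothing about $z$ in the interior of $\supp(\mu)$: there the Cauchy--Stieltjes transform $m_\mu$ need not be defined (the integrand is not absolutely integrable), $\tfrac1{z-X_j}$ is not even integrable so no law of large numbers applies, and $m_{\mu_n}(z)$ can be arbitrarily close to zero on events of nonvanishing probability. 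The paper handles exactly this via an anti-concentration estimate (Lemma \ref{lma:kabl:logaboveneg} invoking a Kolmogorov--Rogozin-type bound on the L\'{e}vy concentration function of $\sum_j (z-X_j)^{-1}$), which gives $\P(|L_n(z)|\le e^{-\eps n}) = O(n^{-1/2})$ regardless of whether $z\in\supp(\mu)$, requiring only that $z$ not be an atom of $\mu$ and that $\mu$ be non-degenerate. Without an anti-concentration input, the lower bound cannot be salvaged by the route you describe.

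A second, smaller, miscalibration: you identify the deterministic array as the anticipated source of trouble for the lower bound on $|m_{\mu_n}|$, but the paper shows the opposite---the deterministic sum $\sum_l (z-\xi_l^{(n)})^{-1}$ can be absorbed into the shift parameter $u$ in the L\'{e}vy concentration function $\Le(\,\cdot\,,t) = \sup_u \P(|\cdot - u|\le t)$, so no control on it is needed at all for the lower bound. Hypothesis \eqref{eqn:kabl:detCond} enters only for the \emph{upper} bound on $|L_n|$ and for the Poisson--Jensen tightness estimate. Finally, your dominated-convergence step (``a uniform-in-$n$ integrable envelope for $|U_{\mu_n'}|$'') is stronger than what is available or needed; the paper instead uses a Tao--Vu-type lemma requiring only tightness of $\int |f_n|^{1+\delta}\,d\lambda$, which it verifies by a Poisson--Jensen expansion of $\log|L_n|$ over a suitable disk---an ingredient absent from your sketch but likely needed to make the passage rigorous.
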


Unfortunately, we cannot always guarantee that the deterministic array satisfies condition \eqref{eqn:kabl:detCond}.  To get around this issue, we will work on subsequences where the condition does hold; specifically, the proof of Theorem \ref{thm:genkabluchko} will require the following corollary of Theorem \ref{thm:kabl:detCond}.  
\begin{corollary}
Assume the same hypotheses as in Theorem \ref{thm:genkabluchko} and, in addition, suppose $\mu_{n_m}$ is a subsequence of $\mu_n$ for which there is a set $E\subset \C$ of zero Lebesgue measure such that $z \in \C \setminus E$ implies
\[
\limsup_{m\to \infty} \frac{1}{n_m}\sum_{l=1}^{k_{n_m}}\log_{-}{\abs{z-\xi_l^{(n_m)}}} = 0.
\]
Then $\mu'_{n_m}$ converges weakly to $\mu$ in probability as $n \to \infty$. 
\label{cor:kabl:subseq}
\end{corollary}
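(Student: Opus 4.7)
The plan is to reduce to Theorem~\ref{thm:kabl:detCond} by extending the deterministic triangular array to a full sequence indexed by $n$, engineered so that (i) at subsequence indices the extended polynomial equals $p_{n_m}$, and (ii) the extended array satisfies the full-sequence condition \eqref{eqn:kabl:detCond}. Once this is in place, the conclusion of Theorem~\ref{thm:kabl:detCond} for the extended polynomials immediately restricts to the subsequence.

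Concretely, for $n = n_m$ in the subsequence I would set $\tilde k_n := k_{n_m}$ and $\tilde\xi_l^{(n)} := \xi_l^{(n_m)}$ for $1 \leq l \leq k_{n_m}$, and for $n$ not in the subsequence set $\tilde k_n := 1$ and $\tilde\xi_1^{(n)} := 0$. Define
\[
\tilde p_n(z) := \prod_{j=1}^{n - \tilde k_n}(z - X_j)\prod_{l=1}^{\tilde k_n}(z - \tilde\xi_l^{(n)}).
\]
By construction $\tilde p_{n_m} = p_{n_m}$, and $\tilde k_n \leq \max(k_n, 1) = o(n)$ because $k_n = o(n)$, so the hypotheses of Theorem~\ref{thm:genkabluchko} are satisfied by the extended sequence $\tilde p_n$.

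The crux is verifying condition \eqref{eqn:kabl:detCond} for the extended array off a Lebesgue-null set. Take $z \in \C \setminus (E \cup \{0\})$. Along the subsequence indices, the corollary's hypothesis gives
\[
\limsup_{m \to \infty} \frac{1}{n_m}\sum_{l=1}^{k_{n_m}}\log_-|z - \xi_l^{(n_m)}| = 0;
\]
along the complementary indices, the log-sum reduces to $n^{-1}\log_-|z|$, which tends to $0$ since $z \neq 0$. Since the $\limsup$ of a nonnegative sequence equals the maximum of the $\limsup$s along any partition of the index set (reading each value as $0$ on a finite piece if needed), we obtain $\limsup_{n \to \infty} \frac{1}{n}\sum_{l=1}^{\tilde k_n}\log_-|z - \tilde\xi_l^{(n)}| = 0$ off $E \cup \{0\}$. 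Thus Theorem~\ref{thm:kabl:detCond} applies to $\tilde p_n$ and yields $\mu'_{\tilde p_n} \to \mu$ weakly in probability as $n \to \infty$. Restricting to $n = n_m$ and using $\tilde p_{n_m} = p_{n_m}$ gives $\mu'_{n_m} \to \mu$ weakly in probability as required.

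I do not anticipate any substantive obstacle, since the reduction is essentially bookkeeping. The only point requiring care is the choice of filler: taking $\tilde\xi_1^{(n)} := 0$ off the subsequence contributes a single term $n^{-1}\log_-|z|$ which is harmless off the null set $\{0\}$, so the exceptional set is enlarged only by a null amount. Any other fixed deterministic point would serve equally well, with $\{0\}$ replaced accordingly.
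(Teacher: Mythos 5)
Your argument is correct and follows essentially the same route as the paper: extend the array to all $n$, verify condition \eqref{eqn:kabl:detCond} for the extension off a null set, invoke Theorem~\ref{thm:kabl:detCond}, and restrict to the subsequence. The only (cosmetic) difference is your choice of filler off the subsequence: you set $\tilde k_n := 1$ with $\tilde\xi_1^{(n)} := 0$, which forces you to enlarge the exceptional set by $\{0\}$ and to argue that $n^{-1}\log_-|z|\to 0$ for $z\neq 0$. The paper simply sets $\tilde k_n := 0$ off the subsequence, so the corresponding sums are empty and vanish identically; this avoids introducing any new exceptional points and makes the verification of \eqref{eqn:kabl:detCond} a one-line identity rather than a limsup-splitting argument. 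Both choices are valid; the paper's is marginally cleaner.
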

\begin{proof}
We show that $\mu_{n_m}$ is a subsequence of a new sequence of random measures (modified from $\mu_n$) for which condition \eqref{eqn:kabl:detCond} does hold. To this end, define the sequence $\tilde{k}_n$ by
\[\tilde{k}_n := \begin{cases}k_n, &\text{if $n = n_m$ for some $m \in {\mathbb{N}}$},\\0,&\text{otherwise},\end{cases}\]
and the random polynomial
\[
\tilde{p}_n(z) := \prod_{j=1}^{n-\tilde{k}_n}(z-X_j)\prod_{l=1}^{\tilde{k}_n}(z-\xi_l^{(n)}).
\]
Also let $\tilde{\mu}_n$ and $\tilde{\mu}'_n$ denote $\mu_{\tilde{p}_n}$ and $\mu_{\tilde{p}'_n}$, respectively. By construction, $\mu_{n_m}$ and $\mu'_{n_m}$ are subsequences of $\tilde{\mu}_n$ and $\tilde{\mu}'_n$, respectively. Now, $\tilde{k}_n = o(n)$, and for $z \in \C \setminus E$, 
\[
\limsup_{n\to \infty}\frac{1}{n}\sum_{l=1}^{\tilde{k}_{n}}\log_{-}{\abs{z-\xi_l^{(n)}}} =  \limsup_{m\to \infty} \frac{1}{n_m}\sum_{l=1}^{k_{n_m}}\log_{-}{\abs{z-\xi_l^{(n_m)}}} = 0.
\]
Thus, Theorem \ref{thm:kabl:detCond} implies that $\tilde{\mu}'_n$ converges weakly to $\mu$ in probability as $n \to \infty$. It follows that the subsequence $\mu'_{n_m}$ also converges to $\mu$ weakly in probability as $m \to \infty$.
\end{proof}

The following lemma will allow us to justify the use of Corollary \ref{cor:kabl:subseq}. 

\begin{lemma}
Let $\mu_n$ be a sequence of random probability measures on $\C$, and suppose $\mu$ is a deterministic probability measure on $\C$. Then, $\mu_n$ converges weakly to $\mu$ in probability if and only if each subsequence of $\mu_n$ contains a further subsequence that converges weakly to $\mu$ in probability.
\label{lma:kabl:subCriterion}
\end{lemma}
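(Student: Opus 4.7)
The plan is to reduce this to the standard scalar subsequence principle: for real-valued random variables, $Y_n \to Y$ in probability if and only if every subsequence contains a further subsequence converging in probability to $Y$. Recall that weak convergence in probability $\mu_n \to \mu$ is defined as $\int f \, d\mu_n \to \int f \, d\mu$ in probability for every bounded continuous $f: \C \to \R$, so the lemma packages a family of scalar statements indexed by $f$.

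The forward direction is immediate. If $\mu_n \to \mu$ weakly in probability and $(\mu_{n_k})$ is any subsequence, then for every bounded continuous $f$ and every $\eps > 0$, the quantity $\Prob(|\int f \, d\mu_{n_k} - \int f \, d\mu| > \eps)$ is a subsequence of a sequence tending to zero, hence tends to zero. Thus $(\mu_{n_k})$ itself serves as the required further subsequence, with no refinement needed.

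For the reverse direction, I would argue by contradiction. Assume every subsequence of $(\mu_n)$ admits a further subsequence converging weakly to $\mu$ in probability, but that $\mu_n \not\to \mu$ weakly in probability. By the definition, there must be at least one bounded continuous function $f_0: \C \to \R$ for which $\int f_0 \, d\mu_n$ fails to converge to $\int f_0 \, d\mu$ in probability. Negating convergence in probability for real-valued random variables yields constants $\eps, \delta > 0$ and a subsequence $(n_k)$ with
\[
\Prob\left(\left|\int f_0 \, d\mu_{n_k} - \int f_0 \, d\mu\right| > \eps\right) \geq \delta
\]
for all $k$. The hypothesis applied to $(\mu_{n_k})$ produces a further subsequence $(\mu_{n_{k_j}})$ converging weakly to $\mu$ in probability; specializing the definition to $f_0$ forces the probability above to tend to $0$ along the $n_{k_j}$, contradicting the lower bound $\delta$.

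The argument is essentially free of obstacles, being the classical double-subsequence trick. The only subtlety worth flagging is that weak convergence in probability is a family condition indexed by test functions $f$, rather than a single scalar condition, but since the negation only produces one offending $f_0$, the problem collapses immediately to the scalar subsequence principle.
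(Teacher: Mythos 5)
Your proof is correct and follows essentially the same route as the paper: reduce the measure-valued statement to the scalar one by looking at $\int f\,d\mu_n$ for each bounded continuous $f$, then invoke the subsequence characterization of convergence in probability. The only difference is cosmetic—the paper cites Billingsley (Theorem 2.6, p.~20) for that scalar fact, while you supply the standard two-line contradiction argument in its place.
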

\begin{proof}
Observe that, for each bounded and continuous function $f:\C \to \R$, the sequence $\int_\C f\,d\mu_n$ is a sequence of complex-valued random variables whose subsequences are of the form $\int_\C f\,d\mu_{n_m}$, where $\mu_{n_m}$ is a subsequence of $\mu_n$. In addition, $\int f \,d\mu$ is a constant.  Thus, the claim follows by applying Theorem 2.6 on page 20 of \cite{PBill} to the random variables $\int_\C f\,d\mu_n$. 
\end{proof}

We now prove Theorem \ref{thm:genkabluchko} by way of Corollary \ref{cor:kabl:subseq} and Lemma \ref{lma:kabl:subCriterion}. The proof of Theorem \ref{thm:kabl:detCond} is delayed until Section \ref{sec:kabl:proofDetCond}.  Fix a subsequence $\mu'_{n_m}$ of $\mu'_n$.  We will show that there exists a further subsequence that converges weakly to $\mu$ in probability, which, by Lemma \ref{lma:kabl:subCriterion} would complete the proof of Theorem \ref{thm:genkabluchko}.  

Clearly, $\mu_{n_m}$ is a subsequence of $\mu_n$. If $\lambda$ denotes Lebesgue measure on $\C$, then Markov's inequality implies that, for any $\eps > 0$, 
\begin{align*}
\lambda\left(z \in \C : \frac{1}{n_m} \sum_{l=1}^{k_{n_m}}\log_{-}\abs{z-\xi_l^{(n_m)}} \geq \eps\right) &\leq \frac{1}{\eps}\int_\C\frac{1}{n_m} \sum_{l=1}^{k_{n_m}}\log_{-}\abs{z-\xi_l^{(n_m)}}\,d\lambda(z)\\
&= \frac{1}{\eps\cdot n_m} \sum_{l=1}^{k_{n_m}}\int_\C \log_{-}\abs{z-\xi_l^{(n_m)}}\,d\lambda(z)\\
&= \frac{k_{n_m}}{\eps\cdot n_m} \int_\C \log_{-}\abs{z}\,d\lambda(z).
\end{align*}
The last expression tends to zero as $m \to \infty$ by the local integrability of the logarithm and the fact that $k_n = o(n)$. Thus, the sequence of functions
\[z \mapsto \frac{1}{n_m} \sum_{l=1}^{k_{n_m}}\log_{-}\abs{z-\xi_l^{(n_m)}} \]
converges to zero in measure as $m \to \infty$.  Among other things, this implies that there exists a subsequence of this sequence that converges to zero for almost every $z\in \C$ (see, for instance, Theorem 2.30 on page 61 of \cite{F} for details). 
Let $\mu_{{n_m}_j}$ denote the corresponding subsequence of random measures. By Corollary \ref{cor:kabl:subseq}, we have that $\mu'_{{n_m}_j}$ converges weakly to $\mu$ in probability as $j \to \infty$, completing the proof.  


\subsection{Proof of Theorem \ref{thm:kabl:detCond}}\label{sec:kabl:proofDetCond}

It remains to prove Theorem \ref{thm:kabl:detCond}.  The proof presented here is modeled after the arguments given in \cite{K}. The case where $\mu$ is degenerate is straightforward to establish by computing $\mu'_{n}$ explicitly and directly verifying that $\abs{\int_\C f\,d\mu'_{n} - \int_\C f\,d\mu_{n}} \to 0$ almost surely as $n \to \infty$ for any bounded and continuous function $f:\C\to \R$. We now consider the case that $\mu$ is non-degenerate. 

The proof of Theorem \ref{thm:kabl:detCond} will reduce to studying the logarithmic derivative $L_n$ of $p_n$ defined by the formula
\[L_n(z) := \frac{p'_n(z)}{p_n(z)} = \sum_{j = 1}^{n-k_n}\frac{1}{z-X_j} + \sum_{l = 1}^{k_n}\frac{1}{z-\xi_l^{(n)}}.\] 
Specifically, Theorem \ref{thm:kabl:detCond} will follow from Lemma \ref{lma:kabl:intsmall} below. We also now state a related lemma (Lemma \ref{lma:kabl:Lnsmall}), which we will need later. Note that these two lemmas are very similar to \cite[Lemmas 2.1 and 2.2]{K}; however, neither lemma follows directly from the results in \cite{K} because of the deterministic contribution to $L_n$.

\begin{lemma}
Under the assumptions of Theorem \ref{thm:kabl:detCond}, there is a set $F \subset \C$ of Lebesgue measure zero such that if $z \in \C \setminus F$, then
\[\frac{1}{n}\log\abs{L_n(z)} \longrightarrow 0
\]
in probability as $n \to \infty$.  
\label{lma:kabl:Lnsmall}
\end{lemma}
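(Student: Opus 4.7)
The claim is equivalent to saying $|L_n(z)| = e^{o(n)}$ in probability, so the strategy is to prove matching upper and lower bounds: for $z$ outside a Lebesgue null set $F$ and every $\eps > 0$,
\[
\Prob\bigl( e^{-\eps n} \leq |L_n(z)| \leq e^{\eps n} \bigr) \longrightarrow 1.
\]
The exceptional set $F$ will be a countable union of null sets: the set $E$ from \eqref{eqn:kabl:detCond}, a null set from a Borel--Cantelli argument on $\mu$-measures of small balls, and the set where $\int |z - w|^{-1}\,d\mu(w)$ is infinite.

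For the upper bound, decompose $L_n$ into its random and deterministic parts and use
\[
|L_n(z)| \leq (n - k_n)\,\max_j |z - X_j|^{-1} + k_n \max_l |z - \xi_l^{(n)}|^{-1},
\]
which yields
\[
\frac{1}{n}\log_{+}|L_n(z)| \leq \frac{\log(2n)}{n} + \frac{1}{n}\max_j \log_{-} |z - X_j| + \frac{1}{n}\max_l \log_{-} |z - \xi_l^{(n)}|.
\]
The deterministic maximum is dominated by the sum in \eqref{eqn:kabl:detCond} and is thus $o(1)$ for $z \notin E$. For the random maximum, a union bound gives $\Prob(\min_j |z - X_j| < e^{-\eps n}) \leq n\,\mu(B(z, e^{-\eps n}))$, and combining Markov with the identity $\int \mu(B(z, r))\,d\lambda(z) = \pi r^2$ shows $\lambda\{z : \mu(B(z, e^{-\eps n})) \geq n^{-2}\} \leq \pi n^2 e^{-2\eps n}$, which is summable in $n$; Borel--Cantelli then yields $n\,\mu(B(z, e^{-\eps n})) \to 0$ for Lebesgue-a.e.\ $z$.

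For the lower bound, observe that $L_n(z) = n\, m_{\mu_n}(z)$ where $\mu_n$ denotes the empirical measure of the roots. By Fubini, $\int |z - w|^{-1}\,d\mu(w) < \infty$ for Lebesgue-a.e.\ $z$, so $m_\mu(z)$ is well defined and, combined with the deterministic hypothesis, the weak law of large numbers yields $m_{\mu_n}(z) = m_\mu(z) + o(1)$ in probability. If $m_\mu(z) \neq 0$, then $|L_n(z)| \sim n |m_\mu(z)|$ and the bound is immediate. If $m_\mu(z) = 0$, a finer argument is required: a truncation of the summands at a polynomial scale together with a Chebyshev-type anticoncentration applied to the real and imaginary parts of $m_{\mu_n}(z)$ separately should yield $|m_{\mu_n}(z)| \geq n^{-1/2 + o(1)}$ with probability tending to one, hence $\frac{1}{n}\log|L_n(z)| \to 0$.

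The main obstacle is the lower bound in the case $m_\mu(z) = 0$: the hypothesis \eqref{eqn:kabl:detCond} bounds only the $\log_{-}$ contributions of the deterministic array, so a priori the deterministic sum $\sum_l (z - \xi_l^{(n)})^{-1}$ could be large in magnitude and might conspire to nearly cancel the random part. Handling this requires an anticoncentration estimate for $\sum_j (z - X_j)^{-1}$ that is robust to arbitrary additive shifts; in practice one should condition on the deterministic part and apply the anticoncentration only to the random sum. This is the step that must depart most substantively from the argument in \cite{K}, which did not have a deterministic contribution to contend with.
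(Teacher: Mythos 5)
Your overall blueprint---construct a Lebesgue-null exceptional set, prove matching upper and lower bounds, and use a shift-robust anticoncentration estimate for the lower bound---mirrors the structure of the paper's proof (which splits into Lemmas \ref{lma:kabl:logbelowzero} and \ref{lma:kabl:logaboveneg}). Your upper bound is sound, and in fact handles the deterministic contribution more directly than the paper: you observe that $\max_l \log_-|z-\xi_l^{(n)}| \le \sum_l \log_-|z-\xi_l^{(n)}| = o(n)$ by \eqref{eqn:kabl:detCond} for $z \notin E$, whereas the paper builds a separate null set $F_2$ via Borel--Cantelli to control the deterministic reciprocals.

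The lower bound, however, contains a genuine gap. The claim ``the weak law of large numbers yields $m_{\mu_n}(z) = m_\mu(z) + o(1)$ in probability'' is false, because $m_{\mu_n}$ includes the deterministic term $\frac{1}{n}\sum_l(z-\xi_l^{(n)})^{-1}$, whose magnitude is not controlled by hypothesis \eqref{eqn:kabl:detCond}: a single point with $|z - \xi_1^{(n)}| = e^{-\sqrt n}$ contributes $e^{\sqrt n}/n \to \infty$ to $m_{\mu_n}(z)$ even though its $\log_-$ contribution $\sqrt{n}/n$ is negligible. Consequently your case split on whether $m_\mu(z) = 0$ is a red herring---when $m_\mu(z) \neq 0$, the deterministic shift can still dominate or nearly cancel the random sum, so that case is \emph{not} immediate either. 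You do identify the correct repair at the end (treat the deterministic sum as a fixed additive shift and apply an anticoncentration estimate to $\sum_j (z-X_j)^{-1}$ alone), which is exactly what the paper does: the L\'{e}vy concentration function $\Le(Z,t) = \sup_{u\in\C}\P(|Z-u|\le t)$ has a built-in supremum over shifts, so $\P(|L_n(z)| \le e^{-\eps n}) \le \Le\bigl(\sum_{j=1}^{n-k_n}(z-X_j)^{-1},\ e^{-\eps n}\bigr)$ holds with no case distinction at all. But your proposed tool---truncation plus a ``Chebyshev-type anticoncentration''---will not deliver this. Chebyshev gives an upper-tail bound on deviations, not a small-ball (anticoncentration) estimate, and $(z-X_j)^{-1}$ need not have a finite second moment for the $z$ under consideration, so a truncation introduces a bias you would then have to track. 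The paper instead uses a Kolmogorov--Rogozin-type inequality (Petrov's Theorem 2.22, stated here as Lemma \ref{lma:anticoncentration}), which requires only non-degeneracy of the summands and gives $\Le\bigl(\sum_j(z-X_j)^{-1}, e^{-\eps n}\bigr) \le C(1+e^{-\eps n})/\sqrt{n-k_n} \to 0$ directly. Replace the Chebyshev step with this estimate and drop the case split on $m_\mu(z)$, and your argument closes.
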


\begin{lemma}
Under the assumptions of Theorem \ref{thm:kabl:detCond}, for any continuous, compactly supported function $\varphi:\C \to \R$, we have
\begin{equation}
\frac{1}{n}\int_\C\log\abs{L_n(z)}\varphi(z)\,d\lambda(z) \longrightarrow 0
\label{eqn:kabl:intsmall}
\end{equation}
in probability as $n \to \infty$.  (Recall that $\lambda$ denotes Lebesgue measure on $\mathbb{C}$.)
\label{lma:kabl:intsmall}
\end{lemma}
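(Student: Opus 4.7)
The plan is to upgrade the pointwise-in-$z$ convergence in probability from Lemma \ref{lma:kabl:Lnsmall} to convergence of the integral in \eqref{eqn:kabl:intsmall}, following the approach of Kabluchko \cite{K}. Let $K := \supp\varphi$, which is compact. For each $\eps > 0$ I split $K$ as $A_n(\eps) \sqcup (K\setminus A_n(\eps))$, where $A_n(\eps) := \{z \in K : n^{-1}|\log|L_n(z)|| > \eps\}$. On the complement $K\setminus A_n(\eps)$ the integrand $n^{-1}\log|L_n|\,\varphi$ is pointwise bounded by $\eps\|\varphi\|_\infty$. On $A_n(\eps)$, two applications of Cauchy--Schwarz (once in $d\lambda$, once in expectation) yield
\[
\E\int_{A_n(\eps)}\frac{|\log|L_n(z)||}{n}|\varphi(z)|\,d\lambda(z) \;\leq\; \|\varphi\|_\infty\sqrt{\E\,\lambda(A_n(\eps))}\cdot\sqrt{\int_K\frac{\E(\log|L_n(z)|)^2}{n^2}\,d\lambda(z)}.
\]

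The first factor tends to zero: Fubini gives $\E\,\lambda(A_n(\eps)) = \int_K \P(n^{-1}|\log|L_n(z)||>\eps)\,d\lambda(z)$, and Lemma \ref{lma:kabl:Lnsmall} together with dominated convergence (the integrand is bounded by $1$ and vanishes for $\lambda$-a.e.\ $z$) shows this tends to $0$ as $n\to\infty$. The crux is therefore to establish the uniform bound $\int_K n^{-2}\E(\log|L_n(z)|)^2\,d\lambda(z) = O(1)$. I would split $\log = \log_+ - \log_-$ and bound each piece separately. For the upper part, the triangle inequality $|L_n(z)| \leq \sum_j|z-X_j|^{-1}+\sum_l|z-\xi_l^{(n)}|^{-1}$ combined with the elementary estimate $\log_+(\sum_{i=1}^n a_i)\leq \log n+\sum_i\log_+ a_i$ gives
\[
\log_+|L_n(z)| \;\leq\; \log n + \sum_{j=1}^{n-k_n}\log_-|z-X_j| + \sum_{l=1}^{k_n}\log_-|z-\xi_l^{(n)}|.
\]
Squaring via Cauchy--Schwarz, taking expectation, and integrating in $z$ over $K$, the uniform bound $\sup_{y\in\C}\int_K (\log_-|z-y|)^2\,d\lambda(z) < \infty$ (from local integrability of $(\log_-)^2$) yields $\int_K n^{-2}\E(\log_+|L_n(z)|)^2\,d\lambda(z) = O(1)$.

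The lower part $\log_-|L_n|$ is the main obstacle. Writing $L_n(z) = S_n(z) + D_n(z)$ with $S_n(z) := \sum_{j}(z-X_j)^{-1}$ random and $D_n(z) := \sum_l(z-\xi_l^{(n)})^{-1}$ deterministic, the law of large numbers gives $n^{-1}S_n(z)\to m_\mu(z)$ almost surely for $\lambda$-a.e.\ $z$, and this limit is non-zero for $z\notin \supp(\mu)\cup\M_\mu$. Combined with the control on $D_n$ supplied by condition \eqref{eqn:kabl:detCond}, this forces $|L_n(z)|\gtrsim n$ on a high-probability event, where $\log_-|L_n(z)|=0$. The low-probability complement must be handled by a quantitative anti-concentration estimate for $L_n(z)$; this is where the analysis is most delicate, because without density assumptions on $\mu$ a direct Chebyshev bound alone does not give fast enough decay of $\P(|L_n(z)|<t)$ as $t\to 0$, and one may need to exploit that the set $\{z : |L_n(z)|<t\}$ has $\lambda$-measure $O(nt^2)$ (a local bound near each of the $n-1$ zeros of $L_n$) in order to control the $d\lambda$-integral rather than the pointwise expectation. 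Combining these bounds yields $\int_K n^{-2}\E(\log_-|L_n(z)|)^2\,d\lambda(z) = O(1)$, so the contribution from $A_n(\eps)$ tends to zero. Since $\eps > 0$ is arbitrary and the $K\setminus A_n(\eps)$ contribution is $O(\eps)$, the integral in \eqref{eqn:kabl:intsmall} converges to $0$ in $L^1$, hence in probability.
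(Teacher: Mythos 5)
Your overall strategy (split $K$ into $A_n(\eps)$ and its complement, apply Cauchy--Schwarz, reduce to showing $\E\lambda(A_n(\eps))\to 0$ and a uniform second-moment bound) is in the spirit of the paper's dominated-convergence route, but the crucial second-moment estimate is not established, and the way you sketch it cannot be carried through with the tools available.

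Concretely, you need $\int_K n^{-2}\E(\log_-|L_n(z)|)^2\,d\lambda(z)=O(1)$, and for that you need a quantitative small-ball bound $\P(|L_n(z)|<t)$ that decays as $t\to 0$ fast enough to make $\E(\log_-|L_n(z)|)^2$ finite, let alone $O(n^2)$. The only anti-concentration estimate in play (the Esseen/Petrov bound in Lemma~\ref{lma:anticoncentration}) gives $\P(|L_n(z)|<t)\leq C(1+t)/\sqrt{n-k_n}$, which does \emph{not} tend to $0$ as $t\to 0$; it only controls the event $\{\log_-|L_n(z)|>\eps n\}$ in probability, not in $L^2$. You recognize this and suggest instead controlling $\lambda(\{z:|L_n(z)|<t\})$; however, a bound of the form $O(nt^2)$ is not a purely local count near the $n-1$ zeros of $L_n$, since the derivative of $L_n$ at a critical point can be arbitrarily small (critical points can nearly coincide), and you give no mechanism to rule this out. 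So the $\log_-$ contribution is a genuine gap in the proposal, not a routine detail.

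The paper circumvents the difficulty by a different key idea: it applies the Tao--Vu dominated-convergence lemma (Lemma~\ref{lma:kabl:tauvu}), whose hypothesis is only \emph{tightness} of $\frac{1}{n^2}\int_{\D_r}\log^2|L_n|\,d\lambda$ as a sequence of random variables, a much weaker requirement than a uniform expectation bound. The tightness is then established via the Poisson--Jensen formula: $\log|L_n(z)|$ is written as a boundary term $I_n(z;R)$ plus Blaschke-type contributions from the zeros and poles of $L_n$ inside $\D_R$ (i.e., the critical points and roots of $p_n$). The Blaschke terms are bounded deterministically, for any configuration of zeros and poles, by the local $\lambda$-integrability of $\log^2$ (with the crude factor $s_n^2/n^2\leq 1$), and the boundary term $I_n(z;R)$ is controlled in probability using the uniform upper bound on $\sup_{|z|=R}|L_n(z)|$ (Lemma~\ref{lma:kabl:unifzBound}) together with the pointwise estimate at $z=0$ (Lemma~\ref{lma:kabl:In0bd}). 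This completely sidesteps the need for any small-ball estimate in $t$. If you want to salvage your outline, you would essentially have to import this Poisson--Jensen representation anyway to control the $\log_-$ part of the integral.
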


We now prove Theorem \ref{thm:kabl:detCond} assuming Lemma \ref{lma:kabl:intsmall}.  The key idea is the following formula (see, for instance, \cite[Section 2.4.1]{JHough}), which relates the integral in \eqref{eqn:kabl:intsmall} to the measures $\mu_n$ and $\mu'_n$.  For any polynomial $f$ that is not identically zero,
\[
\frac{1}{2\pi}\Delta\log\abs{f} = \sum_{z\in \C\,:\,f(z) = 0}\delta_z
\]
in the distributional sense, where each root in the sum is counted with multiplicity. In other words, for any compactly supported, smooth function $\varphi:\C\to \R$, we have
\[
\frac{1}{2\pi}\int_\C\log\abs{f(z)}\Delta\varphi(z)\,d\lambda(z) = \sum_{z\in \C\,:\,f(z) = 0}\varphi(z).
\]
  From this relationship we obtain that, for any smooth, compactly supported function $\varphi:\C \to \R$,  
\begin{equation*}
\frac{1}{n}\sum_{z \in \C\,:\, p_n'(z) = 0}\varphi(z) - \frac{1}{n}\sum_{z \in \C\,:\, p_n(z) = 0}\varphi(z) = \frac{1}{2\pi n}\int_\C \log\abs{L_n(z)} \Delta \varphi(z)\,d\lambda(z).
\end{equation*}
In view of Lemma \ref{lma:kabl:intsmall}, the integral on the right tends to zero in probability as $n\to \infty$. In addition, by the law of large numbers and the fact that $k_n = o(n)$, 
\[
\frac{1}{n}\sum_{z \in \C\,:\, p_n(z) = 0}\varphi(z) = \frac{1}{n}\sum_{j=1}^{n-k_n}\varphi(X_j) +\frac{1}{n}\sum_{l=1}^{k_n}\varphi(\xi_l^{(n)}) \longrightarrow \int_\C\varphi(z)\,d\mu(z)
\]
almost surely as $n \to \infty$. Hence, for any smooth, compactly supported function $\varphi:\C \to \R$
\begin{equation*}
\int_\C \varphi(z)\,d\mu'_n(z) = \frac{1}{n-1}\sum_{z \in \C\,:\, p_n'(z) = 0}\varphi(z) \longrightarrow \int_\C\varphi(z)\,d\mu(z)
\end{equation*}
in probability as $n \to \infty$. Since $\mu$ is a probability measure, we conclude from a simple approximation argument that $\mu'_n$ converges weakly to $\mu$ in probability.  This completes the proof of Theorem \ref{thm:kabl:detCond}.  


\subsubsection{Proof of Lemma \ref{lma:kabl:Lnsmall}}


We now turn our attention to proving Lemmas \ref{lma:kabl:Lnsmall} and \ref{lma:kabl:intsmall}.  We begin with Lemma \ref{lma:kabl:Lnsmall}, which we will need to prove Lemma \ref{lma:kabl:intsmall}.  
First, we construct the exceptional set $F$ described in Lemma \ref{lma:kabl:Lnsmall} from several smaller subsets. The first of these, $F_1$, contains points where $\mu$ misbehaves, while another, $F_2$, includes values too close to the deterministic array.  Define the set $F_1$ by 
\[
F_1 := \set{z \in \C: \int_\C\log^2_{-}\abs{z-y}\,d\mu(y) = \infty}.
\]
$F_1$ has Lebesgue measure zero since 
\begin{align*}
\int_\C \left(\int_\C\log^2_{-}\abs{z-y}\,d\mu(y)\right)\,d\lambda(z) &=  \int_\C \left(\int_\C\log^2_{-}\abs{z-y}\,d\lambda(z)\right)\,d\mu(y)\\
&= \int_\C \frac{\pi}{2}\,d\mu(y) = \frac{\pi}{2} < \infty 
\end{align*}
by the Fubini--Tonelli theorem.  

We now construct the subset $F_2$ by applying the Borel--Cantelli lemma.  Recall that the set $\Xi$, defined in \eqref{eq:def:Xi}, is at most countable, and hence $\lambda(\Xi) = 0$.  Thus, for a fixed $n \in {\mathbb{N}}$ and $1 \leq l \leq k_n$,
\begin{align*}
\lambda \left(z \in \C\setminus \Xi: \frac{1}{|z-\xi_l^{(n)}|} \geq e^{\sqrt{n}}\right) &= \lambda\left(z \in \C\setminus \Xi : \log_{-}|z-\xi_l^{(n)}| \geq \sqrt{n}\right) \\
&\leq\frac{1}{n^3}\int_\C \log_{-}^6|z-\xi_l^{(n)}|\,d\lambda(z) \\
&=\frac{C}{n^3}
\end{align*}
by Markov's inequality, where $C > 0$ is an absolute constant equal to the integral of $\log^6_{-}\abs{\cdot}$ over $\C$. Thus, we obtain 
\[
\sum_{n=1}^\infty \sum_{l=1}^{k_n}\lambda \left(z \in \C\setminus \Xi: \frac{1}{|z-\xi_l^{(n)}|} \geq e^{\sqrt{n}}\right) \leq \sum_{n=1}^\infty \sum_{l=1}^{k_n} \frac{C}{n^3} = \sum_{n=1}^\infty\frac{Ck_n}{n^3} < \infty
\]
since $k(n) = o(n)$. It follows by the Borel--Cantelli lemma and the fact that $\Xi$ is countable that there exists a set $F_2 \supset \Xi$ of Lebesgue measure zero such that, for every $z \in \C \setminus F_2$, $|z-\xi_l^{(n)}|^{-1} < e^{\sqrt{n}}$ for all but finitely many pairs $(n,l)$. We conclude that, for $z\in \C \setminus F_2$,  
\begin{equation}
\sum_{l=1}^{k_n}\frac{1}{|z-\xi_l^{(n)}|} = O_z(e^{2\sqrt{n}}),
\label{eqn:kabl:detCond2}
\end{equation}
where the asymptotic notation $O_z(\cdot)$ means the implicit constant is allowed to depend on $z$.  

If we define $F$ to be
$
F:= E \cup F_1\cup F_2, 
$
then $F$ has Lebesgue measure zero and, as we shall see, satisfies the requirements of Lemma \ref{lma:kabl:Lnsmall}. (Recall the definition of $E$ from the statement of Theorem \ref{thm:kabl:detCond} above.) Notice that $F$ contains the atoms of $\mu$ and the values in the deterministic triangular array.


\begin{lemma}
For every $z \in \C \setminus F$, \[\limsup_{n\to \infty} \frac{1}{n}\log\abs{L_n(z)} \leq 0\] almost surely.
\label{lma:kabl:logbelowzero}
\end{lemma}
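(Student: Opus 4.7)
The plan is to split $L_n(z)$ into its random and deterministic contributions and control each separately. Write $L_n(z) = L_n^{\mathrm{rand}}(z) + L_n^{\mathrm{det}}(z)$, where
\[
L_n^{\mathrm{rand}}(z) := \sum_{j=1}^{n-k_n}\frac{1}{z-X_j}, \qquad L_n^{\mathrm{det}}(z) := \sum_{l=1}^{k_n}\frac{1}{z-\xi_l^{(n)}}.
\]
Because $\log|A+B| \leq \log 2 + \max(\log|A|, \log|B|)$, it suffices to prove the desired limsup bound for each of the two pieces. The deterministic piece is immediate from the construction of $F_2$: for $z \notin F_2$ the estimate \eqref{eqn:kabl:detCond2} gives $|L_n^{\mathrm{det}}(z)| \leq \sum_{l=1}^{k_n}|z-\xi_l^{(n)}|^{-1} = O_z(e^{2\sqrt{n}})$, so $\frac{1}{n}\log|L_n^{\mathrm{det}}(z)| = O(n^{-1/2}) \to 0$.

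For the random piece I would apply the triangle inequality and bound the sum by $n$ times its maximum summand:
\[
|L_n^{\mathrm{rand}}(z)| \;\leq\; \sum_{j=1}^{n-k_n}\frac{1}{|z - X_j|} \;\leq\; n\exp\bigl(M_n(z)\bigr), \qquad M_n(z) := \max_{1 \leq j \leq n-k_n}\log_{-}|z - X_j|,
\]
using the pointwise inequality $\log\tfrac{1}{|z-X_j|} \leq \log_{-}|z-X_j|$, which holds trivially on both $\{|z-X_j|\leq 1\}$ and $\{|z-X_j|\geq 1\}$. It therefore suffices to show $M_n(z) = o(n)$ almost surely. This is precisely where the exclusion $z \notin F_1$ enters: by definition $\E[\log_-^2|z-X_1|] < \infty$, so Markov's inequality gives, for each fixed $\eps > 0$,
\[
\sum_{j=1}^\infty \Prob\bigl(\log_-|z-X_j| > \eps j\bigr) \;\leq\; \sum_{j=1}^\infty \frac{\E[\log_-^2|z-X_1|]}{\eps^2 j^2} \;<\; \infty.
\]
The Borel--Cantelli lemma then guarantees that, almost surely, $\log_-|z-X_j| \leq \eps j$ for all but finitely many $j$, so $M_n(z) \leq \eps n + O_\omega(1)$ for all large $n$. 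Letting $\eps$ run through a countable sequence tending to zero yields $M_n(z) = o(n)$ almost surely, and consequently $\frac{1}{n}\log|L_n^{\mathrm{rand}}(z)| \leq \frac{\log n + M_n(z)}{n} \to 0$.

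The main obstacle is the control of the random part: the quadratic log-integrability condition built into the definition of $F_1$ is exactly what is needed to make the Borel--Cantelli argument succeed against the iid (potentially unbounded) random variables $\log_-|z-X_j|$; a weaker integrability assumption such as $\E[\log_-|z-X_1|]<\infty$ would only produce $O(n)$ control on $M_n(z)$ and would not suffice. The other components of the argument are routine triangle-inequality manipulations combined with estimates already established in the excerpt.
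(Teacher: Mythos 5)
Your proof is correct and follows essentially the same route as the paper's: both control the random sum by applying Markov's inequality to $\log_-|z-X_j|$ (using the second-moment condition built into $F_1$) together with Borel--Cantelli, and both dispatch the deterministic sum via the bound \eqref{eqn:kabl:detCond2} coming from $F_2$. Your presentation packages the argument a bit more explicitly (splitting $L_n$ into random and deterministic parts and isolating the maximum $M_n(z)$, rather than folding the finitely many exceptional terms into a single a.s.-finite random variable $W$ as the paper does), but the underlying mechanism is identical.
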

\begin{proof}
Fix $z \in \C \setminus F$, and let $\eps > 0$ be given. By Markov's inequality, for any $n \in {\mathbb{N}}$, we have
\begin{align*}
\P\left(\frac{1}{\abs{z- X_n}} \geq e^{\eps n}\right) &= \P\left(\log_{-}\abs{z- X_n} \geq \eps n\right)\\
&\leq \frac{\E\left[\log^2_{-}\abs{z- X_n}\right]}{\eps^2 n^2}\\
&= \frac{1}{\eps^2 n^2} \int_\C\log^2_{-}\abs{z-y}\,d\mu(y)\\
&= \frac{C_1}{\eps^2 n^2},
\end{align*}
for a non-negative constant $C_1$ since $z \notin F_1$. Hence, 
\[
\sum_{n=1}^\infty \P\left(\frac{1}{\abs{z- X_n}} \geq e^{\eps n}\right) < \infty, 
\]
so the Borel--Cantelli lemma applies. In particular, almost surely $\frac{1}{|z-X_n|} < e^{\eps n}$ for all but finitely many $n$. Furthermore, $z$ is not an atom of $\mu$, so we have almost surely that, for all $n$,
\[
\abs{L_n(z)} \leq W + (n-k_n)e^{\eps n} + \sum_{l=1}^{k_n}\frac{1}{|z-\xi_{l}^{(n)}|}, 
\]
where $W$ is an almost surely finite random variable. Now, since $z \in \C \setminus F_2$, the bound in \eqref{eqn:kabl:detCond2} implies that, for $n$ sufficiently large, 
\[
\abs{L_n(z)} \leq W + ne^{\eps n} + C_2e^{2\sqrt{n}} \leq e^{2\eps  n}
\]
for a positive constant $C_2$ (depending on $z$).  It follows that \[\limsup_{n\to \infty}\frac{1}{n}\log\abs{L_n(z)} \leq 2\eps\]
almost surely.  Since $\eps > 0$ was arbitrary, the proof is complete.  
\end{proof}


The reverse inequality in Lemma \ref{lma:kabl:Lnsmall} requires an anti-concentration result that can be found, for example, in \cite[Theorem 2.22 on page 76]{P}. Before stating the lemma, we define the L\'{e}vy concentration function of a complex-valued random variable.  

\begin{definition}[L\'{e}vy concentration function]
Let $Z$ be a complex-valued random variable.  The \emph{L\'{e}vy concentration function} of $Z$ is defined as
\[ \Le(Z,t) := \sup_{u \in \C} \P\left(\abs{Z-u}\leq t\right) \]
for all $t \geq 0$.  
\end{definition}

The L\'{e}vy concentration function bounds the \emph{small ball probabilities} for $Z$, which are the probabilities that $Z$ falls in a ball of radius $t$.  

\begin{lemma}[Anti-concentration estimate]
Suppose that $Z_1,\ldots, Z_n$ are iid, non-degenerate, complex-valued random variables. Then, there is a positive constant $C$ (depending only on the distribution of $Z_1$), so that, for any $t \geq 0$,
\begin{equation}
\Le\left(Z_1 + \cdots + Z_n, t\right) \leq C\frac{1 + t}{\sqrt{n}}
\label{eqn:kabl:anticoncentration}
\end{equation}
for all $n \geq 1$.
\label{lma:anticoncentration}
\end{lemma}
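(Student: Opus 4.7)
The plan is to reduce the complex-valued anti-concentration estimate to its real-valued counterpart, for which the classical Kolmogorov--Rogozin inequality (the cited Theorem 2.22 of \cite{P}) applies. The key observation is that projecting the complex sum onto any real linear functional produces a real-valued sum whose small-ball probabilities dominate those of the original complex sum.

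First, I would use the non-degeneracy of $Z_1$ to extract a non-degenerate real projection. Since $Z_1 = \Re(Z_1) + i \Im(Z_1)$ is constant almost surely if and only if both $\Re(Z_1)$ and $\Im(Z_1)$ are constant almost surely, at least one of $\Re(Z_1)$ or $\Im(Z_1)$ must be non-degenerate; without loss of generality, assume $Y_j := \Re(Z_j)$ is non-degenerate. (More generally, if one needed a unit direction $u \in \C$, one could pick any two distinct points $a, b \in \supp(Z_1)$, set $u := (b-a)/|b-a|$, and observe that $\Re(\bar u Z_1)$ attains at least two distinct values.)

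Second, for any $w \in \C$ and $t \geq 0$,
\[
\{|S_n - w| \leq t\} \subseteq \{|\Re(S_n) - \Re(w)| \leq t\},
\]
so that, writing $T_n := Y_1 + \cdots + Y_n$,
\[
\Le(S_n, t) = \sup_{w \in \C} \P(|S_n - w| \leq t) \leq \sup_{v \in \R} \P(|T_n - v| \leq t) = \Le(T_n, t).
\]
Since $Y_1, \ldots, Y_n$ are iid non-degenerate real-valued random variables, the classical anti-concentration estimate from \cite[Theorem 2.22 on page 76]{P} yields
\[
\Le(T_n, t) \leq C \cdot \frac{1 + t}{\sqrt{n}}
\]
for a constant $C > 0$ depending only on the distribution of $Y_1$ (and hence only on the distribution of $Z_1$). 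Chaining this with the previous inequality gives the claim.

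The main bookkeeping obstacle is making sure the chosen real projection is genuinely non-degenerate (so that the Kolmogorov--Rogozin constant remains finite and depends only on the law of $Z_1$), and then confirming that the cited theorem furnishes precisely the $(1+t)/\sqrt{n}$ form stated here. Both points are straightforward consequences of the dichotomy between $\Re(Z_1)$ and $\Im(Z_1)$ described above.
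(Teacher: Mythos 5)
Your proof is correct and follows essentially the same strategy as the paper: observe that non-degeneracy of $Z_1$ forces $\Re(Z_1)$ or $\Im(Z_1)$ to be non-degenerate, project the complex small-ball event onto that coordinate, and apply the real-valued Kolmogorov--Rogozin bound from \cite[Theorem 2.22]{P}. The only cosmetic difference is your parenthetical remark about choosing a general unit direction $u$, which the paper does not need or use.
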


\begin{proof}
Theorem 2.22 on page 76 in \cite{P} implies that equation \eqref{eqn:kabl:anticoncentration} holds when  $Z_1, \ldots, Z_n$ are iid real-valued random variables and the supremum in the concentration function is taken over real numbers (see also \cite[Corollary 6.8]{OT} for a more general version of this inequality). We extend this to the complex case in the following way. By assumption, $Z_1, \ldots, Z_n$ are iid and non-degenerate, so at least one of the real-valued random variables $\Re(Z_1)$ or $\Im(Z_1)$ is non-degenerate.  Without loss of generality, assume $\Re(Z_1)$ is non-degenerate. Then
\begin{align*}
\Le(Z_1+\cdots+ Z_n, t) &= \sup_{u \in \C}\P\left(\abs{Z_1+\cdots + Z_n-u} \leq t\right)\\
&\leq \sup_{u \in \C}\P\left(\abs{\Re(Z_1) + \cdots + \Re(Z_n) - \Re(u)}\leq t\right)\\
&= \sup_{u \in \R}\P\left(\abs{\Re(Z_1) + \cdots + \Re(Z_n) - u}\leq t\right).
\end{align*}
The last expression is bounded by $C\frac{1 + t}{\sqrt{n}}$, for some constant $C$ that depends only on the distribution of $\Re(Z_1)$ by the previously mentioned result in \cite{P}. A nearly identical argument applies if $\Re(Z_1)$ is degenerate and $\Im(Z_1)$ is non-degenerate.
\end{proof}

\begin{lemma} For every $z \in \C \setminus F$ and every $\eps > 0$,
\[
\lim_{n\to \infty} \P \bigg[\frac{1}{n}\log\abs{L_n(z)} \leq -\eps\bigg] = 0.
\]
\label{lma:kabl:logaboveneg}
\end{lemma}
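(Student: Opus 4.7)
The plan is to derive this anti-concentration estimate by applying Lemma \ref{lma:anticoncentration} to the iid portion of $L_n(z)$ and using the deterministic portion merely as a shift. First I would split
\[
L_n(z) = S_n + T_n,\qquad S_n := \sum_{j=1}^{n-k_n}\frac{1}{z-X_j},\qquad T_n := \sum_{l=1}^{k_n}\frac{1}{z-\xi_l^{(n)}},
\]
noting that $T_n$ is a deterministic complex number (depending on $n$ and $z$). Since $z \in \C \setminus F$, the point $z$ is not an atom of $\mu$, so the random variables $Y_j := 1/(z-X_j)$ are almost surely finite and iid. Moreover, they are non-degenerate: if $Y_1$ were almost surely equal to some constant $c$, then $X_1$ would be almost surely equal to $z - 1/c$ (or else $\mu$ would place a point mass at $z$), contradicting the non-degeneracy of $\mu$, which has been assumed throughout this portion of the proof.

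Next I would rewrite the event of interest. Observe that $\{(1/n)\log|L_n(z)| \leq -\eps\} = \{|L_n(z)| \leq e^{-\eps n}\}$, and the latter says precisely that $S_n$ lies within distance $e^{-\eps n}$ of the fixed point $-T_n \in \C$. Hence
\[
\P\left(\frac{1}{n}\log|L_n(z)| \leq -\eps\right) = \P\left(|S_n - (-T_n)| \leq e^{-\eps n}\right) \leq \Le\left(S_n,\, e^{-\eps n}\right).
\]
Applying Lemma \ref{lma:anticoncentration} to the iid sum $S_n$ of $n-k_n$ non-degenerate random variables yields
\[
\Le\left(S_n, e^{-\eps n}\right) \leq C\,\frac{1 + e^{-\eps n}}{\sqrt{n-k_n}}
\]
for a constant $C > 0$ depending only on the distribution of $Y_1$ (hence on $z$ and $\mu$). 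Since $k_n = o(n)$, the right-hand side tends to zero as $n \to \infty$, which gives the conclusion.

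There is no substantial obstacle here; the key conceptual point is that the L\'evy concentration function already absorbs an arbitrary complex shift in its supremum, so the deterministic sum $T_n$ plays no role in the estimate beyond appearing as the center point. The only subtleties are verifying non-degeneracy of the summands $Y_j$ (which uses that $z$ is not an atom of $\mu$, guaranteed by $z \notin F \supset F_1$) and that we have enough iid summands in $S_n$ to make $(n-k_n)^{-1/2}$ decay, which follows from $k_n = o(n)$. Note that this argument does not invoke condition \eqref{eqn:kabl:detCond} on the deterministic array at all; that condition will instead be needed in the complementary bound of Lemma \ref{lma:kabl:logbelowzero} when assembling Lemma \ref{lma:kabl:Lnsmall}.
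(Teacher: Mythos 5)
Your proposal is correct and follows essentially the same route as the paper: split off the deterministic shift $T_n$, observe that it is absorbed into the supremum defining the L\'evy concentration function, and apply Lemma~\ref{lma:anticoncentration} to the $n-k_n$ iid non-degenerate summands, with $k_n=o(n)$ giving the decay. The paper words the absorption of $T_n$ more briefly (``absorbing the contribution \ldots into the complex number $u$''), but the content is identical, and your observation that condition \eqref{eqn:kabl:detCond} is not needed here matches the paper's usage as well.
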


\begin{proof}
Since $k_n = o(n)$, we assume $n$ is sufficiently large so that $k_n < n$.  
Fix $z \in \C\setminus F$, and let $\eps > 0$ be given. Since $\mu$ is non-degenerate and $z$ is not an atom of $\mu$, it follows that $\frac{1}{z-X_1}, \frac{1}{z-X_2}, \ldots$ are iid, non-degenerate, complex-valued random variables satisfying the hypotheses of Lemma \ref{lma:anticoncentration}. By absorbing the contribution of $\sum_{l=1}^{k_n} (z - \xi_l^{(n)})^{-1}$ into the complex number $u$ in the definition of the concentration function, we conclude from Lemma \ref{lma:anticoncentration} that
\[
\P\left(\abs{L_n(z)} \leq e^{-\eps n}\right) \leq \Le\left(\sum_{j=1}^{n-k_n}\frac{1}{z-X_j},\ e^{-\eps n}\right) \leq C\frac{1 + e^{-\eps n}}{\sqrt{n-k_n}}
\]
for a positive constant $C$ depending only on the distribution of $\frac{1}{z - X_1}$. As $n \to \infty$, the right-hand side goes to zero (since $k_n = o(n)$), which completes the proof.
\end{proof}

Together, Lemmas \ref{lma:kabl:logbelowzero} and \ref{lma:kabl:logaboveneg} establish Lemma \ref{lma:kabl:Lnsmall}.


\subsubsection{Proof of Lemma \ref{lma:kabl:intsmall}}

In this section, we prove Lemma \ref{lma:kabl:intsmall} by way of the following dominated convergence result due to Tau and Vu \cite{TaoVu}.

\begin{lemma}[Tao--Vu; Lemma 3.1 in \cite{TaoVu}]
Let $(X,\mathcal A, \nu)$ be a finite measure space, and let $f_1, f_2, \ldots: X\to \R$ be random functions which are defined over a probability space $(\Omega,\BB,\P)$ and are jointly measurable with respect to $\mathcal A\otimes\BB$. Assume that
\begin{enumerate}[(i)]
\item for $\nu$-a.e.\ $x \in X$ we have $f_n(x) \to 0$ in probability, as $n \to \infty$,
\item for some $\delta > 0$, the sequence $\int_X\abs{f_n(x)}^{1+\delta}\,d\nu(x)$ is tight.
\end{enumerate}
Then $\int_X f_n(x)\,d\nu(x)$ converges in probability to $0$.
\label{lma:kabl:tauvu}
\end{lemma}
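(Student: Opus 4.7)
The plan is to prove the stronger statement that $\int_X |f_n(x)| \, d\nu(x) \to 0$ in probability, which immediately implies the conclusion via $|\int_X f_n\, d\nu| \leq \int_X |f_n|\, d\nu$. To this end, I would introduce a truncation parameter $M > 0$ and decompose
\[
\int_X |f_n(x)| \, d\nu(x) = \int_X |f_n(x)| \indicator{|f_n(x)| \leq M} \, d\nu(x) + \int_X |f_n(x)| \indicator{|f_n(x)| > M} \, d\nu(x),
\]
handling the bounded part via two successive applications of bounded convergence and the tail via the tightness hypothesis (ii).

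For the bounded part, fix $M > 0$. Assumption (i) implies that, for $\nu$-almost every $x$, the random variable $|f_n(x)| \indicator{|f_n(x)| \leq M}$ tends to $0$ in probability as $n \to \infty$, and it is uniformly dominated by $M$. The bounded convergence theorem on $(\Omega, \BB, \P)$ therefore yields
\[
\E \left[ |f_n(x)| \indicator{|f_n(x)| \leq M} \right] \longrightarrow 0 \quad \text{for $\nu$-a.e.\ $x$.}
\]
Since these expectations are themselves bounded by $M$ and $\nu(X) < \infty$, a second application of bounded convergence with respect to $\nu$, combined with Fubini--Tonelli (justified by the joint measurability hypothesis), gives
\[
\E \int_X |f_n(x)| \indicator{|f_n(x)|\leq M} \, d\nu(x) \longrightarrow 0,
\]
so this part converges to zero in probability by Markov's inequality, for each fixed $M$.

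For the tail, the elementary inequality $t \leq M^{-\delta} t^{1+\delta}$ for $t > M$ yields the almost sure pointwise bound
\[
\int_X |f_n(x)| \indicator{|f_n(x)| > M} \, d\nu(x) \leq M^{-\delta} \int_X |f_n(x)|^{1+\delta}\, d\nu(x).
\]
Given $\varepsilon, \eta > 0$, tightness supplies $K = K(\eta)$ with $\P(\int_X |f_n|^{1+\delta}\, d\nu > K) < \eta$ uniformly in $n$. Choosing $M$ large enough that $K/M^\delta < \varepsilon/2$, and then $n$ large enough that the bounded part exceeds $\varepsilon/2$ with probability less than $\eta$, we conclude $\P(\int_X |f_n|\, d\nu > \varepsilon) < 2\eta$ for all sufficiently large $n$, which proves the claim. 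The main subtlety is the two-level bounded convergence argument: it relies crucially on the joint measurability hypothesis so that Fubini--Tonelli applies and $\int_X |f_n|\, d\nu$ is itself a random variable, and on finiteness of $\nu$ to ensure the constant $M$ is $\nu$-integrable in the second bounded-convergence step. The two hypotheses then interlock cleanly: condition (i) drives the bounded piece to zero while condition (ii) tames the contribution from large values.
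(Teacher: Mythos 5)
Your proof is correct, and since the paper itself does not supply a proof of this lemma --- it simply cites it as Lemma~3.1 of Tao and Vu \cite{TaoVu} and applies it immediately --- there is no in-paper argument to compare against. Your truncate-and-split strategy (bounded convergence twice, once on $\Omega$ and once on $X$ via Fubini--Tonelli for the piece with $|f_n|\leq M$, together with the inequality $t\leq M^{-\delta}t^{1+\delta}$ on $\{t>M\}$ plus the tightness hypothesis for the tail) is in fact essentially the same argument Tao and Vu give for their Lemma~3.1. One small presentational point: to get $\E\bigl[|f_n(x)|\indicator{|f_n(x)|\leq M}\bigr]\to 0$ from convergence in probability plus uniform boundedness, it is cleanest to observe that for any $\eta>0$ this expectation is at most $\eta + M\,\P(|f_n(x)|>\eta)$, which makes the ``bounded convergence on $\Omega$'' step fully explicit; the rest of your chain (second bounded convergence over the finite measure $\nu$, Fubini--Tonelli justified by joint measurability, Markov's inequality, then the tightness bound with the correct order of choosing $K$, then $M$, then $n$) is airtight.
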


In order to prove Lemma \ref{lma:kabl:intsmall}, we will apply Lemma \ref{lma:kabl:tauvu} to the random functions $f_n(z) := \frac{1}{n}(\log\abs{L_n(z)})\varphi(z)$, where $\varphi$ is a continuous function with compact support. Lemma \ref{lma:kabl:Lnsmall} establishes the first condition, and the tightness condition (with $\delta = 1$) follows from the next lemma. For the remainder of the paper, we let 
\[
\D_R := \set{z \in \C: \abs{z} < R}
\]
denote the open disk of radius $R > 0$ centered about the origin.  Fix $r>0$ such that the support of $\varphi$ is contained in the open disk $\D_r$.  We will occasionally use $\ind_{\D_r}$ to denote the indicator function of the set $\D_r$.

\begin{lemma}
The sequence $\frac{1}{n^2}\int_{\D_r}\log^2\abs{L_n(z)}\,d\lambda(z)$ is tight.
\label{lma:kabl:intTight}
\end{lemma}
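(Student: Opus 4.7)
My plan is to bound the expectation $\mathbb{E}\bigl[\frac{1}{n^2}\int_{\D_r}\log^2|L_n(z)|\,d\lambda(z)\bigr]$ uniformly in $n$, from which tightness is immediate by Markov's inequality. I decompose $\log^2|L_n(z)| = \log_+^2|L_n(z)| + \log_-^2|L_n(z)|$ and estimate each piece separately.

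For the upper-tail piece, I use the deterministic bound $|L_n(z)|\leq n\cdot\max_k|z-w_k|^{-1}$, where $w_k$ ranges over all $n$ roots of $p_n$. Taking logs and squaring gives $\log_+^2|L_n(z)|\leq 2\log^2 n + 2\sum_k\log_-^2|z-w_k|$. Since $\log_-$ is supported in the unit disk, $\int_{\D_r}\log_-^2|z-w|\,d\lambda(z)\leq\int_\C\log_-^2|u|\,d\lambda(u) = \pi/2$ uniformly in $w$, yielding the \emph{deterministic} bound $\frac{1}{n^2}\int_{\D_r}\log_+^2|L_n(z)|\,d\lambda(z) = O(\log^2 n/n^2) + O(1/n) = o(1)$.

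For the lower-tail piece, I use Fubini and the layer cake formula to write
\[
\mathbb{E}\int_{\D_r}\log_-^2|L_n(z)|\,d\lambda(z) = 2\int_0^\infty s\int_{\D_r}\mathbb{P}(|L_n(z)|<e^{-s})\,d\lambda(z)\,ds,
\]
and bound the inner probability via the anti-concentration inequality (Lemma \ref{lma:anticoncentration}) applied to the random sum $\sum_{j=1}^{n-k_n}(z-X_j)^{-1}$, absorbing the deterministic $\sum_l(z-\xi_l^{(n)})^{-1}$ into the shift $u$ in the L\'{e}vy concentration function. The main obstacle is that Lemma \ref{lma:anticoncentration} gives only a bound of order $(1+t)/\sqrt{n-k_n}$, which is \emph{flat} in $t$, so a naive $s$-integration diverges. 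I would overcome this by exploiting the rational-function structure of $L_n=p_n'/p_n$: the zeros of $L_n$ are the critical points of $p_n$, which by Gauss--Lucas lie in the bounded convex hull of the roots, and the local linearization $|L_n(z)|\gtrsim|L_n'(c)||z-c|$ near each simple zero $c$ implies that $\{z:|L_n(z)|<e^{-s}\}$ is covered by $O(n)$ disks of total area $O(e^{-2s}\sum_c|L_n'(c)|^{-2})$. Combined with a probabilistic estimate on $\mathbb{E}\sum_c|L_n'(c)|^{-2}$ (for instance, via the identity $L_n'(c)=p_n''(c)/p_n(c)$ together with another application of anti-concentration), this yields the exponential decay in $s$ needed to close the integral and obtain the desired uniform bound.
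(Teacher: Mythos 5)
Your proposal takes a genuinely different route from the paper, which handles Lemma \ref{lma:kabl:intTight} via the Poisson--Jensen formula~\eqref{eqn:kabl:pjen}. The paper expands $\log|L_n(z)|$ as a boundary integral $I_n(z;R)$ plus sums over the zeros and poles of $L_n$ inside $\D_R$, then bounds $\sup_{\D_r}|I_n|/n$ in probability via Lemmas \ref{lma:kabl:bdpint} and \ref{lma:kabl:bdpintbelow} and handles the zero/pole sums \emph{deterministically} using only the local integrability of $\log^2$. Your upper-tail step is correct and a nice observation: from $|L_n(z)| \leq n\max_k|z-w_k|^{-1}$ and subadditivity of $\log_+$ on products you get $\log_+^2|L_n| \leq 2\log^2 n + 2\sum_k\log_-^2|z-w_k|$, and the $\D_r$-integral of the right side is $O(\log^2 n) + O(n)$, giving a \emph{pathwise} $o(1)$ after dividing by $n^2$. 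That part of the decomposition really works and is elegantly elementary.

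The lower-tail argument, however, has a genuine gap that you correctly diagnose but do not repair. The covering claim --- that $\{z : |L_n(z)| < e^{-s}\}$ has area $O\bigl(e^{-2s}\sum_c|L_n'(c)|^{-2}\bigr)$ --- requires the linearization $|L_n(z)| \gtrsim |L_n'(c)||z-c|$ to hold on a \emph{fixed} neighborhood that contains the entire sublevel set, which fails whenever two critical points are close, or when $|L_n'(c)|$ is small but the second-order term is not, or for moderate $s$ where the sublevel set is not yet localized near the zeros. You would need an explicit threshold $s_0(n,\omega)$ for the onset of localization and a way to handle $s \in [s_0, \infty)$ versus $s \in [0,s_0]$ uniformly in $n$; none of that is supplied. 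More seriously, the needed estimate $\E\sum_c|L_n'(c)|^{-2} < \infty$ (indeed $= O(n)$ or so) is nontrivial and plausibly false as stated: $L_n'$ can vanish at a critical point $c$ of $p_n$ (double critical points of $p_n$), in which case the summand is infinite. Finally, Lemma \ref{lma:anticoncentration} concentrates a sum $\sum_j(z-X_j)^{-1}$ at a \emph{deterministic} $z$; the critical point $c$ is a complicated function of all the $X_j$'s, so anti-concentration does not apply to $L_n'(c)=p_n''(c)/p_n(c)$ without a decoupling argument you haven't given. The paper's Poisson--Jensen route sidesteps all of this precisely because it converts the small-values problem for $L_n$ into an integral of $\log^2$ against a reference measure over a \emph{fixed, bounded} set of singular points, which is finite by the local integrability of the squared logarithm --- no anti-concentration in two dimensions is required at that stage.
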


In view of Lemma \ref{lma:kabl:tauvu}, the proof of Lemma \ref{lma:kabl:intsmall} reduces to establishing Lemma \ref{lma:kabl:intTight}.  We bound the integral in Lemma \ref{lma:kabl:intTight} by employing the Poisson--Jensen formula as in \cite{K}.  In order to do so, we will need a uniform bound on $\abs{L_n(z)}$ for $z$ of certain magnitudes, which is the content of the following lemma.  

\begin{lemma}
There is an exceptional set $G \subset (0,\infty)$ of Lebesgue measure zero such that, for any $R \in (0,\infty) \setminus G$, we have
\begin{equation}
\limsup_{n\to \infty} \frac{1}{n}\log \sup_{\abs{z} = R}\abs{L_n(z)} \leq 0
\label{eqn:kabl:unifzBound}
\end{equation}
almost surely.  
\label{lma:kabl:unifzBound}
\end{lemma}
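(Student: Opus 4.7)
The plan is to reduce the bound to two explicit univariate sums and construct $G$ as the union of two Lebesgue-null sets, one for the random roots and one for the deterministic array. The reduction begins with the elementary inequality $\abs{z - a} \geq \abs{\abs{z} - \abs{a}}$ for any $a \in \C$ and any $z$ with $\abs{z} = R$, so that taking the infimum over the circle yields
\[
\sup_{\abs{z}=R}\abs{L_n(z)} \leq \sum_{j=1}^{n-k_n}\frac{1}{\abs{R-\abs{X_j}}} + \sum_{l=1}^{k_n}\frac{1}{\abs{R-\abs{\xi_l^{(n)}}}}.
\]
It therefore suffices to find a deterministic $G \subset (0,\infty)$ of Lebesgue measure zero such that, for each $R \notin G$, almost surely each of these two sums grows more slowly than any exponential.

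For the random sum I would apply the Fubini--Tonelli theorem to the nonnegative function $(R,y) \mapsto \log^6_-\abs{R-\abs{y}}$. Since $\int_0^\infty \log^6_-\abs{R - s}\,dR \leq \int_{-1}^1\log^6_-\abs{u}\,du$ is finite and uniform in $s \geq 0$, Fubini implies that
\[
G_1 := \set{R > 0 : \int_\C \log^6_-\abs{R-\abs{y}}\,d\mu(y) = \infty}
\]
has Lebesgue measure zero. For $R \notin G_1$ and any $\eps > 0$, Markov's inequality gives $\P(\abs{R-\abs{X_1}}^{-1} \geq e^{\eps n}) = O_R(n^{-6})$, and a union bound over $1 \leq j \leq n$ shows that the probability some $\abs{R-\abs{X_j}}^{-1}$ exceeds $e^{\eps n}$ is $O_R(n^{-5})$, which is summable. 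The Borel--Cantelli lemma then yields that, almost surely, $\sum_{j=1}^{n-k_n}\abs{R-\abs{X_j}}^{-1} \leq n e^{\eps n}$ for all sufficiently large $n$.

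The deterministic sum is handled by mimicking the construction of $F_2$ from the proof of Lemma \ref{lma:kabl:Lnsmall}. Define
\[
G_2 := \set{R > 0 : \abs{R-\abs{\xi_l^{(n)}}} < e^{-\sqrt{n}} \text{ for infinitely many } (n,l)}.
\]
A Borel--Cantelli-for-measures estimate bounds $\lambda(G_2)$ by $\limsup_N \sum_{n \geq N} 2k_n e^{-\sqrt{n}}$, which vanishes because $k_n = o(n)$ forces the series $\sum_n k_n e^{-\sqrt{n}}$ to converge. For $R \notin G_2$, all but finitely many pairs $(n,l)$ satisfy $\abs{R-\abs{\xi_l^{(n)}}}^{-1} \leq e^{\sqrt{n}}$, so $\sum_{l=1}^{k_n}\abs{R-\abs{\xi_l^{(n)}}}^{-1} = O_R(1) + k_n e^{\sqrt{n}} = e^{o(n)}$.

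Setting $G := G_1 \cup G_2$, for any $R \notin G$ and $\eps > 0$ the combined bound gives $\frac{1}{n}\log\sup_{\abs{z}=R}\abs{L_n(z)} \leq \eps + o(1)$ almost surely, and intersecting the almost-sure events corresponding to $\eps = 1/m$ for $m \in \mathbb{N}$ gives a single almost-sure event on which the limsup is at most zero. The main obstacle is calibrating the Markov moment: one must use $\log^k_-$ with $k$ large enough for the union bound over $j \leq n$ to be summable (any $k \geq 3$ suffices), while simultaneously keeping $G_1$ of Lebesgue measure zero via Fubini; the choice $k = 6$ above mirrors the construction of $F_2$ in Lemma \ref{lma:kabl:Lnsmall} and is more than enough.
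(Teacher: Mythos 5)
Your proposal is correct and takes essentially the same route as the paper: reduce $\sup_{\abs{z}=R}\abs{L_n(z)}$ to radial sums via $\abs{z-a}\ge\bigl|\abs{z}-\abs{a}\bigr|$, construct a Fubini-null set $G_1$ controlling the random roots and a Borel--Cantelli-null set $G_2$ controlling the deterministic array, and combine. The only material difference is bookkeeping: you control the random sum with a union bound over $j\le n$, which forces the higher moment $\log^6_{-}$ in the definition of $G_1$, whereas the paper applies Borel--Cantelli to the single sequence $\sup_{\abs{z}=R}\abs{z-X_n}^{-1}$ using only $\log^2_{-}$ and absorbs the finitely many early indices into an almost surely finite random constant $W_R$.
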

\begin{proof}
The proof is similar in spirit to that of Lemma \ref{lma:kabl:logbelowzero}.  We first claim that 
\begin{equation}
\sup_{\abs{z} = R}\frac{1}{\abs{z-X}} \geq K\ \Longleftrightarrow \abs{\abs{X} - R} \leq \frac{1}{K}, 
\label{eqn:kabl:supEquiv} 
\end{equation}
for any $X \in \C$, $R \in (0,\infty) \setminus \{\abs{X}\}$, and $K>0$. This equivalence will allow us to employ the method of Lemma \ref{lma:kabl:logbelowzero} and control the behavior of $\log_{-}{\abs{\abs{X_n} - R}}$. To establish the forward direction of \eqref{eqn:kabl:supEquiv}, observe that
\[0 < \abs{\abs{X} - R}  = \abs{\abs{X} - \abs{z}} \leq \abs{X - z}\] for any $z$ satisfying $\abs{z} = R$. Hence, 
$\sup_{\abs{z} = R}{\abs{X - z}}^{-1} \geq K$ implies $\abs{\abs{X} - R} \leq K^{-1}$. On the other hand, if $\abs{\abs{X} - R} \leq K^{-1}$, write $X = \rho e^{i\theta}$ in polar coordinates, and note that $z^* := Re^{i\theta}$ has modulus $R\neq \rho$ and satisfies 
\[
0 < \abs{z^* - X} = \abs{R - \rho} = \abs{\abs{X}- R} \leq \frac{1}{K}.
\]
The fact that $\sup_{\abs{z} = R}{\abs{X - z}}^{-1} \geq K$ follows.

We are ready to construct $G$ from two exceptional sets $G_1$ and $G_2$. Define 
\[
G_1 := \set{R \in (0,\infty): \int_\C\log_{-}^2\abs{\abs{y}-R}\,d\mu(y) = \infty}.
\]
It follows from the Fubini--Tonelli theorem that $G_1$ has Lebesgue measure zero since
\[
\int_\R\int_\C \log_{-}^2\abs{\abs{y}-R}\,d\mu(y)\,dR = \int_\C\int_\R\log_{-}^2\abs{\abs{y}-R}\,dR\,d\mu(y) = \int_\C 2 \,d\mu(y) =2 < \infty.
\]

We now construct $G_2$. Let $\lambda_\R$ denote Lebesgue measure on the real line, and let 
\[ \Xi_\R := \bigcup_{n=1}^\infty \left\{|\xi_l^{(n)}|: 1 \leq l \leq k_n \right\}. \]  
Clearly, $\lambda_\R(\Xi_\R) = 0$.  Equivalence \eqref{eqn:kabl:supEquiv} and Markov's inequality imply that for a fixed $n \in {\mathbb{N}}$ and $1 \leq l \leq k_n$,
\begin{align*}
\lambda_\R &\left( R\in (0,\infty)\setminus \Xi_\R : \sup_{\abs{z}=R}\frac{1}{|z-\xi_l^{(n)}|} \geq e^{\sqrt{n}}  \right) \\
&\qquad\qquad\qquad= \lambda_\R\left(R\in (0,\infty)\setminus \Xi_\R:\log_{-}||\xi_l^{(n)}|-R|\geq \sqrt{n}\right)\\
&\qquad\qquad\qquad\leq\frac{1}{n^3}\int_{[0,\infty)}\log_{-}^6||\xi_l^{(n)}|-R|\,dR \\
&\qquad\qquad\qquad\leq\frac{1}{n^3}\int_\R\log_{-}^6\abs{R}\,dR\\
&\qquad\qquad\qquad = \frac{C}{n^3},
\end{align*}
where $C > 0$ is an absolute constant.  
It follows that
\[
\sum_{n=1}^\infty\sum_{l=1}^{k_n} \lambda_\R\left(R \in (0,\infty) \setminus \Xi_\R: \sup_{\abs{z}=R}\frac{1}{|z-\xi_l^{(n)}|} \geq e^{\sqrt{n}}\right) \leq \sum_{n=1}^{\infty} \frac{Ck_n}{n^3} < \infty,
\]
so the Borel--Cantelli lemma and the countability of $\Xi_\R$ show that outside of a set $G_2 \supset \Xi_\R$ of Lebesgue measure zero, 
\[
\sup_{\abs{z}=R}\frac{1}{|z-\xi_l^{(n)}|} < e^{\sqrt{n}}
\]
for all but finitely many pairs $(n,l)$. Hence, for $R \in (0,\infty) \setminus G_2$, 
\begin{equation}
\sum_{l=1}^{k_n}\sup_{\abs{z}=R}\frac{1}{|z-\xi_l^{(n)}|} < C_R + k_ne^{\sqrt{n}} = O_R(e^{2\sqrt{n}}),
\label{eqn:kabl:supDetBd}
\end{equation}
where $C_R$ is a positive constant depending on $R$. (Note that since $\Xi_\R \subset G_2$, $\sup_{\abs{z}=R}|z-\xi_l^{(n)}|^{-1} < \infty$ for each pair $(n,l)$). If we define $G = G_1 \cup G_2$, then, $G\subset (0, \infty)$ has Lebesgue measure zero, and for $R \in (0,\infty) \setminus G$, we have that, for any $n\in {\mathbb{N}}$ and any $\eps > 0$,
\begin{align*}
\P\left(\sup_{\abs{z}=R}\frac{1}{\abs{z-X_n}}\geq e^{\eps n}\right) &= \P\left(\log_{-}\abs{\abs{X_n}-R} \geq \eps n \right) \\
&\leq \frac{1}{\eps^2n^2}\E[\log^2_{-}\abs{\abs{X_n}-R}] \\
&= \frac{C_R'}{n^2},
\end{align*}
where we used \eqref{eqn:kabl:supEquiv} in the first step and Markov's inequality in the second.  Here, $C_R'$ is a positive constant depending only on $R$ and $\mu$. By the Borel--Cantelli lemma, it follows that almost surely, $\sup_{\abs{z}=R}\frac{1}{\abs{z-X_n}}< e^{\eps n}$ for all but finitely many $n$. This guarantees that for $R \in (0,\infty)\setminus G$, there is an almost surely bounded, real-valued random variable $W_R$ for which 
\[
\sup_{\abs{z} = R}\abs{L_n(z)} \leq W_R + (n-k_n)e^{\eps n} + \sum_{l=1}^{k_n}\sup_{\abs{z}=R}\frac{1}{|z-\xi_l^{(n)}|} \leq e^{2\eps n}
\]
almost surely. (Note that $\P(\abs{X_n} = R) = 0$ for all $R \in (0,\infty) \setminus G$ by the definition of the set $G_1$.) The last inequality holds for all sufficiently large $n$ by \eqref{eqn:kabl:supDetBd}. As $\eps > 0$ was arbitrary, \eqref{eqn:kabl:unifzBound} now follows.  
\end{proof}


We now use the Poisson--Jensen formula to re-write $\log\abs{L_n(z)}$. For any $R > r$ and $n \in {\mathbb{N}}$, let
\[
y_1^{(n)}, \ldots, y_{s_n}^{(n)}\quad\text{and}\quad w^{(n)}_{1}, \ldots, w_{t_n}^{(n)}
\]
be the roots and critical points, respectively, of $p_n$ that are located in the open disk $\D_{R}$. The Poisson--Jensen formula (see, for example, \cite[Chapter II.8]{Ma}) implies that for any $z \in \D_{R}$ which is not a zero or pole of $L_n$, 
\begin{equation}
\log\abs{L_n(z)} = I_n(z;R) + \sum_{t=1}^{t_n}\log\abs{\frac{R\left(z-w_{t}^{(n)}\right)}{R^2 - \overline{w_{t}^{(n)}}\,z}} - \sum_{s=1}^{s_n}\log\abs{\frac{R\left(z-y_s^{(n)}\right)}{R^2 - \overline{y_s^{(n)}}\,z}},
\label{eqn:kabl:pjen}
\end{equation}
where
\begin{equation*}
I_n(z;R) := \frac{1}{2\pi} \int_0^{2\pi} \log\abs{L_n(Re^{i\theta})}P_{R}(\abs{z},\theta - \arg{z})\,d\theta,
\end{equation*}
and $P_{R}$ denotes the Poisson kernel
\begin{equation}
P_{R}(\rho,\alpha) := \frac{R^2 - \rho^2}{R^2 + \rho^2 - 2R\rho\cos \alpha},\quad \rho \in [0,R],\ \alpha \in [0,2\pi].
\label{eqn:kabl:pker}
\end{equation}

\begin{lemma} 
There exists an $R \geq \max\set{1,3r}$ such that
\begin{equation}
\limsup_{n\to \infty}\frac{1}{n}\sup_{z\in \D_r}I_n(z;R)\leq 0
\label{eqn:kabl:supIbelow0}
\end{equation}
almost surely.
\label{lma:kabl:bdpint}
\end{lemma}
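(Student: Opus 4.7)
The plan is to leverage Lemma \ref{lma:kabl:unifzBound}, which provides an exceptional set $G \subset (0,\infty)$ of Lebesgue measure zero with the following property: for every $R \in (0,\infty) \setminus G$, one has
\[
\limsup_{n\to\infty} \frac{1}{n}\log\sup_{|z|=R}|L_n(z)| \leq 0
\]
almost surely. Since $G$ has measure zero while $[\max\{1,3r\},\infty)$ has infinite measure, I would simply pick any deterministic $R \in [\max\{1,3r\},\infty) \setminus G$ and work with this fixed value throughout. Note that this choice satisfies $R > r$, so the Poisson--Jensen setup applies on $\D_r$.

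For this choice of $R$, the almost sure event furnished by Lemma \ref{lma:kabl:unifzBound} ensures that, for each $\eps > 0$, there exists an almost surely finite random index $N_\eps$ such that
\[
\log|L_n(Re^{i\theta})| \leq \eps n \quad \text{for all } \theta \in [0,2\pi] \text{ and all } n \geq N_\eps.
\]
The final step is to integrate this pointwise boundary bound against the non-negative Poisson kernel defined in \eqref{eqn:kabl:pker}. Recalling the standard normalization
\[
\frac{1}{2\pi}\int_0^{2\pi} P_R(\rho,\alpha)\,d\alpha = 1 \quad \text{for every } \rho \in [0,R),
\]
and observing that $z \in \D_r$ forces $|z| < r < R$, one obtains
\[
I_n(z;R) = \frac{1}{2\pi}\int_0^{2\pi}\log|L_n(Re^{i\theta})|\,P_R(|z|,\theta - \arg z)\,d\theta \leq \eps n
\]
uniformly over $z \in \D_r$ whenever $n \geq N_\eps$. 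Dividing by $n$, taking the supremum over $\D_r$, and then sending $\eps \downarrow 0$ yields the claimed almost sure limsup bound.

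The only mild subtlety worth flagging is that $\log|L_n|$ may be very negative at points on the circle $|z|=R$ where $L_n$ nearly vanishes; however, because the Poisson kernel is non-negative, such negative contributions only decrease $I_n(z;R)$, so no matching lower bound on $\log|L_n|$ is needed to control the supremum in \eqref{eqn:kabl:supIbelow0}. Since the technical heavy lifting has already been carried out in Lemma \ref{lma:kabl:unifzBound} (the combination of Borel--Cantelli with the equivalence $\sup_{|z|=R}|z-X|^{-1} \geq K \Longleftrightarrow ||X|-R| \leq K^{-1}$), no substantial obstacle remains and the argument above is essentially the entire proof.
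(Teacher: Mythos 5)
Your argument is correct and takes essentially the same route as the paper: pick a deterministic $R \geq \max\{1,3r\}$ outside the Lebesgue-null exceptional set $G$ from Lemma \ref{lma:kabl:unifzBound}, then push the resulting almost-sure boundary bound $\log\lvert L_n(Re^{i\theta})\rvert \leq \eps n$ (for $n$ large) through the Poisson--Jensen representation of $I_n(z;R)$ and let $\eps \downarrow 0$. The single difference is how the Poisson integral is controlled: you invoke the non-negativity of $P_R$ together with the normalization $\tfrac{1}{2\pi}\int_0^{2\pi}P_R(\rho,\alpha)\,d\alpha = 1$, whereas the paper first derives the pointwise bound $P_R(\lvert z\rvert,\alpha)\leq 2$ for $R\geq 3r$ and multiplies the integrand by $2$. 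Your version is in fact slightly cleaner: applying $P_R\leq 2$ directly to the integrand $\tfrac{1}{n}\log\lvert L_n(Re^{i\theta})\rvert$ preserves the inequality only where that integrand is non-negative, so your step of first replacing $\log\lvert L_n\rvert$ by its supremum (or, equivalently, the constant $\eps n$) and only then using the kernel normalization sidesteps a sign issue that the paper's displayed chain of inequalities glosses over.
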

\begin{proof}
Fix $z \in \D_r$. Then, for any $\alpha \in [0,2\pi]$ and $R \geq 3r$, we have
\begin{equation}
P_{R}(\abs{z},\alpha) = \frac{R^2 - \abs{z}^2}{R^2 + \abs{z}^2 - 2R\abs{z}\cos \alpha}\leq \frac{(R + \abs{z})(R-\abs{z})}{R^2 + \abs{z}^2 - 2R\abs{z}} = \frac{R+\abs{z}}{R-\abs{z}} \leq  2.
\label{eqn:kabl:bdpkerabove}
\end{equation}
The last inequality follows from the fact that $\abs{z} \leq r$ and from the equivalence
\[
\frac{R+r}{R-r} \leq 2 \quad \Longleftrightarrow \quad R \geq 3r,
\]
which holds for all $R > r > 0$.  
Consequently, for any $z \in \D_r$ and $R\geq 3r$,
\begin{align*}
\frac{1}{n}I_n(z;R) &\leq \frac{1}{2\pi}\int_0^{2\pi}\frac{1}{n}\log\abs{L_n(Re^{i\theta})}\cdot 2\,d\theta\\
&\leq \frac{1}{\pi}\int_0^{2\pi}\frac{1}{n}\log\sup_{\abs{w} = R}\abs{L_n(w)}\,d\theta\\
&= \frac{2}{n}\log\sup_{\abs{w} = R}\abs{L_n(w)}.
\end{align*}
Therefore, we obtain
\begin{equation}
\limsup_{n\to \infty}\frac{1}{n}\sup_{z\in \D_r}I_n(z;R) \leq \limsup_{n\to \infty}\frac{2}{n}\log\sup_{\abs{w} = R}\abs{L_n(w)}.
\label{eqn:kabl:bdsupIabove}
\end{equation}
The desired result now follows by applying Lemma \ref{lma:kabl:unifzBound} to \eqref{eqn:kabl:bdsupIabove}. In particular, since the exceptional set $G \subset (0,\infty)$ of Lemma \ref{lma:kabl:unifzBound} has measure zero, we can choose $R \geq \max\set{1,3r}$ so that \eqref{eqn:kabl:supIbelow0} holds almost surely.  
\end{proof}



Next, we show that $I_n(z;R)$ is bounded below uniformly for $z \in \D_r$. We assume that $0 \notin F$, and we first consider the case when $z=0$. There is no loss of generality in assuming $0 \notin F$, for if $0 \in F$, we can choose a different point $c \notin F$ and prove Theorem \ref{thm:kabl:detCond} for the random variables $\widetilde{X}_j := X_j - c$ and the deterministic array $\tilde{\xi}_l^{(n)} := \xi_l^{(n)} - c$.  This follows since the translation of the roots of $p_n$ by $c$ simply translates the critical points by $c$ (see Proposition \ref{prop:translate}).

\begin{lemma}
Suppose $0 \notin F$. Let $R \geq \max\set{1,3r}$ be the value from Lemma \ref{lma:kabl:bdpint}.  Then there exists a non-negative constant $A$ such that
\[
\lim_{n\to \infty}\P\left(\frac{1}{n}I_n(0; R) \leq -A \right) = 0.
\]
\label{lma:kabl:In0bd}
\end{lemma}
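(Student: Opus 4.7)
The plan is to exploit the Poisson-Jensen formula \eqref{eqn:kabl:pjen} at $z = 0$. Because $P_R(0, \alpha) \equiv 1$, it specializes to
\[
I_n(0;R) = \log\abs{L_n(0)} - \sum_{t=1}^{t_n}\log\abs{w_t^{(n)}/R} + \sum_{s=1}^{s_n}\log\abs{y_s^{(n)}/R}.
\]
Every critical point $w_t^{(n)}$ lies in $\D_R$, so each term $\log\abs{w_t^{(n)}/R}$ is negative and the middle sum (after its leading minus sign) is non-negative; dropping it yields the crude lower bound
\[
I_n(0;R) \geq \log\abs{L_n(0)} - \sum_{s=1}^{s_n}\log_{-}\abs{y_s^{(n)}/R}.
\]
It therefore suffices to lower bound $\frac{1}{n}\log\abs{L_n(0)}$ and upper bound $\frac{1}{n}\sum_{s=1}^{s_n}\log_{-}\abs{y_s^{(n)}/R}$ in probability.

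The first quantity is handled by Lemma \ref{lma:kabl:Lnsmall} applied at $z = 0 \in \C \setminus F$, which yields $\frac{1}{n}\log\abs{L_n(0)} \to 0$ in probability. For the second, I separate the roots of $p_n$ that lie in $\D_R$ into the random ones $X_j$ and the deterministic ones $\xi_l^{(n)}$, and exploit the elementary inequality $\log_{-}\abs{y/R} \leq \log R + \log_{-}\abs{y}$, valid for $R \geq 1$ and $0 < \abs{y} < R$. The excluded case $y = 0$ never arises: $0 \notin F_1$ forces $\mu(\set{0}) = 0$, so $X_j \neq 0$ almost surely, while $0 \notin F_2 \supset \Xi$ ensures $\xi_l^{(n)} \neq 0$. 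This gives
\[
\frac{1}{n}\sum_{s=1}^{s_n}\log_{-}\abs{y_s^{(n)}/R} \leq \log R + \frac{1}{n}\sum_{j=1}^{n-k_n}\log_{-}\abs{X_j} + \frac{1}{n}\sum_{l=1}^{k_n}\log_{-}\abs{\xi_l^{(n)}}.
\]

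The random sum is controlled by the strong law of large numbers: since $0 \notin F_1$ we have $\int_\C \log_{-}^2\abs{y}\,d\mu(y) < \infty$, which implies $\E[\log_{-}\abs{X_1}] < \infty$, so the sum converges almost surely to the finite limit $\E[\log_{-}\abs{X_1}]$. The deterministic sum vanishes in the limit by applying the hypothesis \eqref{eqn:kabl:detCond} of Theorem \ref{thm:kabl:detCond} at $z = 0$, which is permitted because $0 \notin E$. Combining the three estimates, with probability tending to one,
\[
\frac{1}{n}I_n(0;R) \geq -\log R - \E[\log_{-}\abs{X_1}] - o(1),
\]
and the lemma follows by taking, for instance, $A := \log R + \E[\log_{-}\abs{X_1}] + 1$.

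The principal subtlety, and the reason the statement does not follow verbatim from the arguments in \cite{K}, is the appearance of the deterministic contribution $\frac{1}{n}\sum_{l=1}^{k_n}\log_{-}\abs{\xi_l^{(n)}}$; its vanishing requires the extra hypothesis \eqref{eqn:kabl:detCond} of Theorem \ref{thm:kabl:detCond}, and this is precisely why Theorem \ref{thm:genkabluchko} must first be reduced to Theorem \ref{thm:kabl:detCond} via the subsequence device of Corollary \ref{cor:kabl:subseq}.
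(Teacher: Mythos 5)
Your proof is correct and follows essentially the same route as the paper: specialize Poisson--Jensen to $z=0$ where the kernel is identically $1$, drop the non-negative critical-point term, lower bound $\frac{1}{n}\log\abs{L_n(0)}$ via the anti-concentration lemma, and split the root sum into random (controlled by the strong law, with finiteness of $\E\log_{-}\abs{X_1}$ guaranteed by $0\notin F_1$) and deterministic (controlled by hypothesis \eqref{eqn:kabl:detCond} at $z=0$, permitted since $0\notin E$) contributions; the pointwise inequality $\log_{-}\abs{y/R}\le\log R+\log_{-}\abs{y}$ you use is exactly the paper's telescoping rearrangement written before rather than after the sum. The one point you should state explicitly is that applying Poisson--Jensen at $z=0$ also requires that $0$ is not a zero of $L_n$ (i.e., not a critical point of $p_n$); you verify the ``not a pole'' part, and the ``not a zero'' part holds with probability $1-o(1)$ by Lemma \ref{lma:kabl:logaboveneg} (or the Lemma \ref{lma:kabl:Lnsmall} you invoke), which is why the conclusion holds only in probability rather than almost surely.
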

\begin{proof}
Since $0 \notin F$, we have $p_n(0) \neq 0$ almost surely; in other words, $0$ is almost surely not a pole of $L_n$. Furthermore, by Lemma \ref{lma:kabl:logaboveneg}, it follows that $0$ is not a zero of $L_n$ with probability $1-o(1)$. Consequently, on the same event, the Poisson--Jensen formula \eqref{eqn:kabl:pjen} applies to $z = 0 \in \D_r$, and we obtain 
\begin{gather}
\begin{aligned}
\frac{1}{n}I_n(0;R) &= \frac{1}{n}\log\abs{L_n(0)} - \frac{1}{n}\sum_{t=1}^{t_n}\log\abs{\frac{w_{t}^{(n)}}{R}} +  \frac{1}{n}\sum_{s=1}^{s_n}\log\abs{\frac{y_s^{(n)}}{R}}\\
&\geq\frac{1}{n}\log\abs{L_n(0)} +  \frac{1}{n}\sum_{s=1}^{s_n}\log\abs{\frac{y_s^{(n)}}{R}}.
\end{aligned}
\label{step:kabl:pjenat00}
\end{gather}
The inequality comes from eliminating 
\[
\frac{1}{n}\sum_{t=1}^{t_n}\log\abs{\frac{w_{t}^{(n)}}{R}} \leq 0.
\]
We bound the remaining two terms in probability. A bound for the first term follows from Lemma \ref{lma:kabl:logaboveneg}.  
It remains to find a lower bound (in probability) for the last term in \eqref{step:kabl:pjenat00}. Let 
\[
x_{1}^{(n)}, \ldots, x_{u_{n}}^{(n)} \quad\text{and}\quad \zeta_{1}^{(n)}, \ldots, \zeta_{v_{n}}^{(n)}
\]
be the random and deterministic roots, respectively, of $p_n$ that are contained in $\D_{R}$. (Note that $u_n + v_n = s_n$.) The law of large numbers implies that 
\[
\frac{1}{n-k_n}\sum_{u=1}^{u_{n}}\log\abs{\frac{x_{u}^{(n)}}{R}} = \frac{1}{n-k_n}\sum_{j=1}^{n-k_n}\log\abs{\frac{X_j}{R}}\ind_{\D_{R}}(X_j) \longrightarrow -\E\log_{-}{\abs{\frac{X_1}{R}}}
\]
almost surely as $n \to \infty$.  The expectation on the right-hand side is finite since $\E\log_{-}\abs{X_1} < \infty$ due to the assumption $0 \notin F$ and by the bounds 
\begin{align*}
-\E\left[\log_{-}\abs{\frac{X_1}{R}}\right] &= -\E\left[\log_{-}\abs{\frac{X_1}{R}} - \log_{-}\abs{X_1} + \log_{-}\abs{X_1}\right]\\
&\geq-\E\left[\log_{-}\left(\frac{1}{R}\right)\right] - \E\left[\log_{-}\abs{X_1}\right] \\
&\geq-\log(R) - \E\left[\log_{-}\abs{X_1}\right],
\end{align*}
which follow from the fact that $R \geq 1$.  Since $k_n=o(n)$, it follows that
\[
\frac{1}{n}\sum_{u=1}^{u_{n}}\log\abs{\frac{x_{u}^{(n)}}{R}} \longrightarrow -\E\left[\log_{-}\abs{\frac{X_1}{R}}\right] \geq -\log(R) - \E\log_{-}\abs{X_1}
\]
almost surely as $n \to \infty$, and as a consequence, we have almost surely
\begin{equation}
\liminf_{n\to \infty}\frac{1}{n}\sum_{u=1}^{u_{n}}\log\abs{\frac{x_{u}^{(n)}}{R}} \geq -A_1,
\label{eqn:kabl:pjenat02}
\end{equation}
for some non-negative constant $A_1$ (depending on $R$). Similarly, as $R \geq 1$, we have
\begin{align*}
0 \geq \frac{1}{n}\sum_{v=1}^{v_{n}}\log\abs{\frac{\zeta_{v}^{(n)}}{R}} &= \frac{1}{n}\sum_{l=1}^{k_n}\log\abs{\frac{\xi_l^{(n)}}{R}}\ind_{\D_{R}}\left(\xi_l^{(n)}\right)\\
&= -\frac{1}{n}\sum_{l=1}^{k_n}\log_{-}\abs{\frac{\xi_l^{(n)}}{R}}\\
&= -\frac{1}{n}\sum_{l=1}^{k_n}\left(\log_{-}\abs{\frac{\xi_l^{(n)}}{R}}-\log_{-}\abs{\xi_l^{(n)}}\right) - \frac{1}{n}\sum_{l=1}^{k_n}\log_{-}\abs{\xi_l^{(n)}}\\
&\geq -\frac{1}{n}\sum_{l=1}^{k_n}\log_{-}\abs{\frac{1}{R}} - \frac{1}{n}\sum_{l=1}^{k_n}\log_{-}\abs{\xi_l^{(n)}} \\
&= -\frac{k_n}{n}\log(R) - \frac{1}{n}\sum_{l=1}^{k_n}\log_{-}\abs{\xi_l^{(n)}}.
\end{align*}
By condition \eqref{eqn:kabl:detCond} and the fact that $k_n = o(n)$, we obtain 
\begin{equation}
\lim_{n\to \infty}\frac{1}{n}\sum_{v=1}^{v_{n}}\log\abs{\frac{\zeta_{v}^{(n)}}{R}}=0.
\label{eqn:kabl:pjenat03}
\end{equation}
(Recall that $0 \notin F$, and hence $0 \notin E$.) Together, \eqref{eqn:kabl:pjenat02} and  \eqref{eqn:kabl:pjenat03} imply the desired conclusion.
\end{proof}


\begin{lemma}
Suppose $0 \notin F$.  Let $R \geq \max\set{1,3r}$ be the constant from Lemma \ref{lma:kabl:bdpint}.  Then there exists a non-negative constant $B$ such that 
\[\lim_{n\to \infty}\P\left(\frac{1}{n}\inf_{z\in \D_r} I_n(z; R) \leq -B\right) = 0.\]
\label{lma:kabl:bdpintbelow}
\end{lemma}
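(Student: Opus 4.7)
The plan is to reduce a lower bound on $\inf_{z \in \D_r} I_n(z; R)$ to the pointwise bound at $z = 0$ already supplied by Lemma \ref{lma:kabl:In0bd}, exploiting the uniform upper bound $P_R(|z|,\alpha) \leq 2$ for $z \in \D_r$ (obtained in \eqref{eqn:kabl:bdpkerabove} during the proof of Lemma \ref{lma:kabl:bdpint}) together with the circle-wise control from Lemma \ref{lma:kabl:unifzBound}.

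First I would decompose $\log|L_n(Re^{i\theta})| = \log_+|L_n(Re^{i\theta})| - \log_-|L_n(Re^{i\theta})|$. Since $P_R \geq 0$ and $\log_+ \geq 0$, the positive part contributes a non-negative integrand, so
\[
I_n(z;R) \;\geq\; -\frac{1}{2\pi}\int_0^{2\pi}\log_-|L_n(Re^{i\theta})|\,P_R(|z|,\theta - \arg z)\,d\theta \;\geq\; -\frac{1}{\pi}\int_0^{2\pi}\log_-|L_n(Re^{i\theta})|\,d\theta,
\]
uniformly in $z \in \D_r$, where the last step uses $P_R \leq 2$ and $\log_- \geq 0$. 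Using $\log_- = \log_+ - \log$ together with the identity $2\pi I_n(0;R) = \int_0^{2\pi}\log|L_n(Re^{i\theta})|\,d\theta$ (which follows from $P_R(0,\alpha) = 1$), this rewrites as
\[
\frac{1}{n}I_n(z;R) \;\geq\; \frac{2}{n}I_n(0;R) - \frac{1}{n\pi}\int_0^{2\pi}\log_+|L_n(Re^{i\theta})|\,d\theta,
\]
again uniformly in $z \in \D_r$.

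It then remains to control the two terms on the right. Fix any $\eps > 0$. By Lemma \ref{lma:kabl:unifzBound}, with probability $1 - o(1)$ we have $\log_+|L_n(Re^{i\theta})| \leq \log\sup_{|w|=R}|L_n(w)| \leq \eps n$ for every $\theta$, so the second term is at least $-2\eps$ with probability $1 - o(1)$. By Lemma \ref{lma:kabl:In0bd}, there is a constant $A \geq 0$ with $\frac{1}{n}I_n(0;R) \geq -A$ with probability $1 - o(1)$. Intersecting these two events and taking the infimum over $z \in \D_r$, I obtain $\frac{1}{n}\inf_{z \in \D_r} I_n(z;R) \geq -2A - 2\eps$ with probability $1 - o(1)$, so the conclusion holds with $B := 2A + 2\eps$.

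No step involves new probabilistic input: the manipulation with $\log_\pm$ and the Poisson kernel is elementary, and the probabilistic estimates are precisely the content of the earlier lemmas. The one point deserving care is translating the almost-sure statement in Lemma \ref{lma:kabl:unifzBound} into the required convergence-in-probability bound for the $\log_+$ integral, but this is the standard observation that $\P(A_n \text{ i.o.}) = 0$ forces $\P(A_n) \to 0$. I expect this bookkeeping to be the only delicate part of the argument.
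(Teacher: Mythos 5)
Your argument is correct and follows essentially the paper's proof: decompose $\log|L_n|$ into $\log_+$ and $\log_-$, bound the negative part using $P_R \leq 2$, and recombine via $2\pi I_n(0;R) = \int_0^{2\pi}\log|L_n(Re^{i\theta})|\,d\theta$ to invoke Lemmas \ref{lma:kabl:In0bd} and \ref{lma:kabl:unifzBound}; the paper additionally retains the positive term with the lower bound $P_R \geq 1/2$, yielding a slightly sharper constant ($3/(4\pi)$ versus your $1/\pi$), but this is immaterial. One cosmetic slip: the inequality $\log_+|L_n(Re^{i\theta})| \leq \log\sup_{|w|=R}|L_n(w)|$ can fail when that supremum is below $1$, so you should write $\log_+\sup_{|w|=R}|L_n(w)|$ instead; the paper sidesteps this by treating the case $q_n^+\equiv 0$ separately, and with that one-character fix your proof goes through unchanged.
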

\begin{proof}
The proof presented here closely follows the arguments in \cite{K}. For simplicity, define
\[
q^+_n(\theta) := \frac{1}{n}\log_{+}\abs{L_n(Re^{i\theta})}\quad\text{and}\quad q^-_n(\theta) := \frac{1}{n}\log_{-}\abs{L_n(Re^{i\theta})}
\]
for $\theta \in [0,2\pi]$. By the definition of the Poisson kernel \eqref{eqn:kabl:pker} and reasoning similar to that used to derive the bounds in \eqref{eqn:kabl:bdpkerabove}, we have 
\[ \frac{1}{2} \leq P_R(\abs{z},\theta) \leq 2 \] 
for all $z \in \D_r$ and $\theta \in [0,2\pi]$. Notice that $P_R(0,\theta) =1$ for all $\theta \in [0,2 \pi]$, so we have
\[
\frac{2\pi}{n} I_n(0;R) = \int_0^{2\pi}q^+_n(\theta)\,d\theta - \int_0^{2\pi}q^-_n(\theta)\,d\theta.
\]
It follows that, for any $n \in {\mathbb{N}}$ and any $z \in \D_r$,
\begin{align*}
\frac{2\pi}{n}I_n(z;R) &= \int_0^{2\pi}q^+_n(\theta)P_R(\abs{z}, \theta - \arg z)\,d\theta - \int_0^{2\pi}q^-_n(\theta)P_R(\abs{z}, \theta - \arg z)\,d\theta\\
&\geq \frac{1}{2}\int_0^{2\pi}q^+_n(\theta)\,d\theta - 2\int_0^{2\pi}q^-_n(\theta)\,d\theta\\
&= \left(\frac{1}{2} - 2\right)\int_0^{2\pi}q^+_n(\theta)\,d\theta + 2\left(\int_0^{2\pi}q^+_n(\theta)\,d\theta - \int_0^{2\pi}q^-_n(\theta)\,d\theta\right)\\
&= -\frac{3}{2}\int_0^{2\pi}q^+_n(\theta)\,d\theta + \frac{4\pi}{n}I_n(0;R).
\end{align*}
In the case where $q^+_n(\theta) = 0$ for all $\theta \in [0,2\pi]$, we obtain the bound 
\[
\frac{2\pi}{n}I_n(z;R) \geq \frac{4\pi}{n}I_n(0;R).
\]
Otherwise, \[q_n^+(\theta) \leq \frac{1}{n}\log \sup_{\abs{z} = R}\abs{L_n(z)} \] for all $\theta \in [0,2\pi]$, and continuing from above,
\begin{align*}
\frac{2\pi}{n}I_n(z;R) &\geq \frac{4\pi}{n}I_n(0;R)-\frac{3}{2}\int_0^{2\pi}\frac{1}{n}\log \sup_{\abs{z} = R}\abs{L_n(z)}\,d\theta  \\
&= \frac{4\pi}{n}I_n(0;R)-\frac{3\pi}{n}\log \sup_{\abs{z} = R}\abs{L_n(z)}.
\end{align*}
In either case, taking the infimum over all $z \in \D_r$ and applying the results of Lemmas
 \ref{lma:kabl:unifzBound} and \ref{lma:kabl:In0bd} gives the desired conclusion.
\end{proof}

We complete the proof of Lemma \ref{lma:kabl:intTight} by applying Lemma \ref{lma:kabl:bdpint} and Lemma \ref{lma:kabl:bdpintbelow} to \eqref{eqn:kabl:pjen}. Let $R \geq \max\set{1,3r}$ be as in Lemma \ref{lma:kabl:bdpint}.   From \eqref{eqn:kabl:pjen}, we apply the Cauchy--Schwarz inequality twice to obtain 
\begin{gather}
\begin{aligned}
\frac{1}{n^2}\log^2\abs{L_n(z)} &\leq \frac{3}{n^2}I_n^2(z;R) + \frac{3t_n}{n^2}\sum_{t=1}^{t_n}\log^2\abs{\frac{R\left(z-w_{t}^{(n)}\right)}{R^2 - \overline{w_{t}^{(n)}}\,z}} \\
&\qquad\qquad+ \frac{3s_n}{n^2}\sum_{s=1}^{s_n}\log^2\abs{\frac{R\left(z-y_s^{(n)}\right)}{R^2 - \overline{y_s^{(n)}}\,z}},
\end{aligned}
\label{eqn:kabl:finalIneq}
\end{gather}
for $z \in \D_R$ that is not a zero or pole of $L_n$. Since there are finitely many zeros and poles of $L_n$ for a fixed $n$ and a fixed realization of $L_n$, \eqref{eqn:kabl:finalIneq} implies
\begin{gather}
\begin{aligned}
\frac{1}{n^2}\int_{\D_r}\log^2\abs{L_n(z)}\,d\lambda(z) &\leq \int_{\D_r}\Bigg(\frac{3}{n^2}I_n^2(z;R) + \frac{3t_n}{n^2}\sum_{t=1}^{t_n}\log^2\abs{\frac{R\left(z-w_{t}^{(n)}\right)}{R^2 - \overline{w_{t}^{(n)}}\,z}} \\
&\qquad\qquad + \frac{3s_n}{n^2}\sum_{s=1}^{s_n}\log^2\abs{\frac{R\left(z-y_s^{(n)}\right)}{R^2 - \overline{y_s^{(n)}}\,z}}\Bigg)\,d\lambda(z)
\end{aligned}
\label{eqn:kabl:finalIneq2}
\end{gather}
 almost surely. Lemmas \ref{lma:kabl:bdpint} and \ref{lma:kabl:bdpintbelow} establish that 
\begin{equation*}
\lim_{n\to \infty}\P\left(\abs{\frac{3}{n^2}\int_{\D_r}I^2_n(z;R)\,d\lambda(z)} \geq C\right) = 0
\end{equation*}
for some constant $C > 0$, and hence the sequence $\frac{3}{n^2}\int_{\D_r}I^2_n(z;R)\,d\lambda(z)$ is tight. 

The remaining two terms of \eqref{eqn:kabl:finalIneq2} are bounded almost surely. Indeed, for $z \in \D_r$ and $y_s^{(n)} \in \D_R$, we have 
\[
\frac{|z-y_s^{(n)}|}{2R} \leq \abs{\frac{R(z-y_s^{(n)})}{R^2- \overline{y_s^{(n)}}\,z}}\leq \frac{|z-y_s^{(n)}|}{R-r},
\]
and hence 
\[
\log^2{\abs{\frac{R(z-y_s^{(n)})}{R^2- \overline{y_s^{(n)}}\,z}}} \leq \log^2{\frac{|z-y_s^{(n)}|}{2R}} + \log^2{\frac{|z-y_s^{(n)}|}{R-r}}. 
\]
By a simple change of variables, we obtain
\[
\int_{\D_r}\log^2{\frac{|z-y_s^{(n)}|}{2R}}\,d\lambda(z) \leq \int_{\D_{2R}}\log^2{\frac{|z|}{2R}}\,d\lambda(z),
\]
and similarly
\[
\int_{\D_r}\log^2{\frac{|z-y_s^{(n)}|}{R-r}}\,d\lambda(z) \leq \int_{\D_{2R}}\log^2{\frac{|z|}{R-r}}\,d\lambda(z).
\]
Thus, by the local integrability of the squared logarithm, 
\[
\frac{3s_n}{n^2}\int_{\D_r}\sum_{s=1}^{s_n}\log^2\abs{\frac{R\left(z-y_s^{(n)}\right)}{R^2 - \overline{y_s^{(n)}}\,z}}\,d\lambda(z) \leq \frac{3s_n^2}{n^2}C' \leq 3C'
\]
almost surely for all $n \in {\mathbb N}$, where $C' > 0$ is a constant that depends only on $R$ and $r$, and, in the last inequality, we used the fact that $s_n \leq n$.  A similar argument applies to the integral of the sum in \eqref{eqn:kabl:finalIneq2} involving the critical points $w_t^{(n)}$; we omit the details.  

We conclude that the sequence $\frac{1}{n^2}\int_{\D_r}\log^2\abs{L_n(z)}\,d\lambda(z)$ is tight, and the proof of Lemma \ref{lma:kabl:intTight} is complete.

\section*{Acknowledgement} The authors would like to thank Boris Hanin for providing useful comments and suggestions on an earlier version of the manuscript.

\end{document}